\documentclass{article}
\usepackage{graphicx} 

\title{Preserving the Ultrapower Axiom by Forcing over Nonoverlapping Extender Models}
\author{Eyal Kaplan\\
Carnegie Mellon University}
\date{\today}
\usepackage[utf8]{inputenc}
\usepackage{amsmath,amssymb,amsthm, xcolor, enumitem, comment, todonotes}
\usepackage{url}
\usepackage[all]{xy}
\usepackage[english]{babel}
\usepackage{hyperref}
\usepackage{tikz-cd}

\theoremstyle{definition}
\newtheorem{definition}{Definition}[section]

\newtheorem{remark}[definition]{Remark}

\theoremstyle{plain}
\newtheorem{theorem}[definition]{Theorem}
\newtheorem{claim}[definition]{Claim}

\newtheorem{lemma}[definition]{Lemma}
\newtheorem{corollary}[definition]{Corollary}

\newcommand{\po}{\mathbb{P}}
\newcommand{\qo}{\mathbb{Q}}
\newcommand{\la}{\langle}
\newcommand{\ra}{\rangle}

\newcommand{\uhr}{\upharpoonright}

\newcommand{\E}{\bar{E}}

\DeclareMathOperator{\dom}{dom}

\DeclareMathOperator{\Ult}{ult}

\begin{document}

\maketitle

\begin{abstract}
    We identify a class of simple yet nontrivial forcing notions that preserve the Ultrapower Axiom (UA) over canonical inner models, reaching the level of a strong cardinal. The forcings we consider are discrete product forcings whose index sets are chosen and spaced according to the structure of the underlying nonoverlapping extender sequence.  As an application, we show that UA is consistent with the existence of a strong cardinal $\kappa$ such that $V\neq \text{HOD}_X$ for every $X\in V_\kappa$, answering a question of Goldberg.
    \end{abstract}

\section{Introduction}

The study of the Ultrapower Axiom ({\rm UA}) has emerged as one of the most prominent areas of research in set theory in recent years. The axiom was introduced and extensively developed by Goldberg, leading to a series of deep and striking consequences for the structure of the set-theoretic universe (see \cite{goldbergtheultrapoweraxiom}). The Ultrapower Axiom asserts that any two ultrapowers of the universe by $\sigma$-complete ultrafilters can be compared by taking further internal ultrapowers, again by $\sigma$-complete ultrafilters. It holds in all currently known canonical inner models and is expected to hold in any canonical inner model constructed in the future.

Establishing consistency results for {\rm UA} by forcing is, in general, a difficult problem. While forcings of rank below the least measurable cardinal preserve {\rm UA}, more substantial forcings may be considerably more destructive. For instance, forcing that adds a new unbounded subset to a measurable cardinal while preserving its measurability may introduce new measures that violate {\rm UA}. Thus, in forcing arguments aimed at preserving {\rm UA}, special care is needed to control the measures that appear in the generic extension.

The motivating question of this paper, raised by Goldberg, is whether a large cardinal $\kappa$ (say, a measurable, strong or  supercompact cardinal) can exist in a model of {\rm UA} satisfying
$V\neq {\rm HOD}_X$
for every $X\in V_\kappa$. The motivation for this question is that it is fairly easy to obtain a model of ${\rm UA}+V\neq{\rm HOD}$ by forcing: one may simply add a Cohen subset of $\omega$ over a canonical inner model. However, in the resulting extension $V={\rm HOD}_X$ for a suitable parameter $X$ of bounded rank. Goldberg's question asks whether one can avoid such trivialities while preserving \({\rm UA}\) by forcing. 

Goldberg's problem is closely connected with the problem of preserving both ${\rm UA}$ and the measurability of $\kappa$ while forcing an unbounded subset of $\kappa$. In \cite{BenNeriaKaplan}, the latter problem was answered in the affirmative by showing that ${\rm UA}$ is consistent with the existence of a measurable cardinal $\kappa$ at which ${\rm GCH}$ fails. The model constructed there, however, satisfies $V={\rm HOD}$. This is due to the incorporation of \textit{self-coding posets} into the forcing construction; these are posets whose generic information is coded by the preservation or destruction of canonically definable stationary sets.\footnote{This idea traces back to the celebrated work of Friedman and Magidor on the possible number of normal measures on a measurable cardinal; see \cite{FriedmanMagidor}.} Additionally, $\kappa$ is not strong in the model of \cite{BenNeriaKaplan}, and, to the best of our knowledge, there had previously been no published forcing construction preserving both ${\rm UA}$ and a strong cardinal.\footnote{Using the same techniques as in \cite{BenNeriaKaplan} to preserve strongness together with ${\rm UA}$ seems plausible, but technically involved.}

In the present paper, we answer Goldberg's question in the affirmative in the context where $\kappa$ is a measurable or a strong cardinal, by isolating a simple class of forcings that preserve {\rm UA} over canonical inner models at these levels. The forcings we consider are discrete product forcings, namely posets of the form
$$\po=\prod_{\alpha\in I}\qo_\alpha,$$
where $I$ is a discrete set of inaccessible cardinals, each $\qo_\alpha$ is $\alpha$-closed, and $\text{rank}(\qo_\alpha)<\min\bigl(I\setminus(\alpha+1)\bigr)$.\footnote{Discrete iterated forcing has appeared recently in the study \rm{UA}: Benhamou and Goldberg \cite{goldbergbenhamou2024applications} study the discrete Magidor iteration and use it to show that the Weak Ultrapower Axiom does not imply the Ultrapower Axiom. However, the present paper uses an entirely different technique, and the role of discreteness is different.}

Discrete product forcings admit a particularly simple presentation, yet enjoy strong fusion properties reminiscent of those of nonstationary-support product forcings. For certain iterated ultrapower embeddings $i\colon V\to M$, these fusion arguments grant total control over the possible generics for $i(\po)$ over $M$. It follows that such embeddings $i$ lift in a unique and canonical way to iterated ultrapowers in the generic extension. This method allows us to show that ${\rm UA}$ is preserved by forcing over Kunen's canonical inner model for a measurable cardinal, $L[U]$ (see Theorem \ref{Thm:ForcingOverLU}). Consequently, this answers Goldberg's question at the level of measurable cardinals (see Corollary \ref{Cor:UA+Meas+VneqHODX}).

To reach the level of a strong cardinal, we work over extender models of the form $L[\vec E]$, where $\vec E$ is a coherent sequence of nonoverlapping extenders, as in Mitchell's minimal inner model for a strong cardinal (see \cite{MitchellInnerModel}). However, a new technical difficulty arises in this setting. The fusion arguments described above allow us to lift certain iterated ultrapower embeddings of the ground model, but it is not clear a priori how to lift \emph{every} normal iteration arising from the $L[\vec E]$-sequence in a canonical way. One of the central ideas of this paper is a technique that allows us to achieve this. The key is to design the forcing itself in accordance with the underlying extender sequence.

Concretely, we require $I$ to be \emph{suitably spaced} with respect to $\vec E$, meaning that whenever $\alpha\in I$ and $\alpha'<\alpha$, $o^{\vec E}(\alpha')<\alpha$, and whenever $\alpha<\kappa$ satisfies $o^{\vec E}(\alpha)>0$, $I\cap\alpha$ is unbounded in $\alpha$. These additional assumptions on the spacing of $I$ allow us to refine the fusion argument so as to control the generic for $i(\po)$ over $M$ for every normal iterate $i\colon L[\vec E]\to M$ arising in the relevant extender comparisons, and hence to lift those comparisons canonically. Consequently, we obtain the following result:

\begin{theorem}\label{Thm:MainThmForStrong}
    Assume that $V = L[\vec{E}]$ is Mitchell's inner model for a strong cardinal, constructed from a coherent sequence of nonoverlapping extenders  $\vec{E}$. Let $\po$ be a suitably spaced discrete product forcing. Let $G\subseteq \po$ be generic over $V$. Then $\kappa$ remains strong in $V[G]$ and $V[G]\vDash{\rm UA}$.
\end{theorem}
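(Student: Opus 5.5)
The plan has two parts: preserving strongness, and preserving UA. Both rest on the same device, a fusion argument that pins down the generic for $j(\po)$ over the target model, so that embeddings of $V$ lift canonically to $V[G]$.

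\textbf{Strongness.} Fix $\lambda>\kappa$ and an extender $E$ on the sequence $\vec E$ witnessing $\lambda$-strongness, with $j=j_E\colon V\to M$. Write $j(\po)=\po\times\po_{\rm tail}$. Here $\po_{\rm tail}$ is the product over $j(I)\setminus\kappa$ (or $\setminus\lambda$ once $V_\lambda^M=V_\lambda$ is arranged), and it is highly closed in $M$.

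Since $M=\{j(f)(a) : a\in[\lambda]^{<\omega}\}$, the dense open sets of $\po_{\rm tail}$ in $M$ are indexed by functions in $V$. A fusion argument then builds a single condition, or rather a canonical generic $H$, meeting all of them. The construction uses discreteness: each factor $\qo_\alpha$ is $\alpha$-closed and has rank below the next point of $I$. Concretely, $H$ is generated by $j``G$ restricted to the tail, in the manner of nonstationary-support products.

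Suitable spacing gives that no point of $j(I)$ lies in the interval $[\kappa,\lambda)$ where genericity would be problematic, or else that the relevant factors are already decided by $G$. One then lifts to $j\colon V[G]\to M[G\times H]$ and checks $V_\lambda^{V[G]}\subseteq M[G\times H]$.

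\textbf{UA.} Use the standard characterization: in $L[\vec E]$, every ultrapower by a countably complete ultrafilter is a normal iterate, and UA follows from comparison. In $V[G]$, take countably complete ultrafilters $U_0,U_1$. First show that each $j_{U_k}\restriction V$ is an internal iteration $i_k\colon V\to M_k$ of $V$ by $\vec E$. This uses a covering/approximation argument: $\po$ factors at the relevant point into a small part and a highly closed part, so the restriction is definable in $V$, and by Schlutzenberg/Goldberg-style results it is a normal iterate.

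Then prove the key lemma: for every normal iteration $i\colon V\to N$ arising from $\vec E$ (finite, or a countable limit, as appears in comparisons), there is a unique $i(\po)$-generic $H$ over $N$ with $i``G\subseteq H$. The spacing hypotheses enter here. For each $\alpha\in I$, the ordinals $o^{\vec E}(\alpha')$ for $\alpha'<\alpha$ stay below $\alpha$, so any extender applied in the iteration with critical point $\mu$ has its image points of $i(I)$ lying in intervals covered by the factors $\qo_\alpha$ for $\alpha>\mu$. On the other hand, a measurable $\mu$ is a limit of $I$, so the new points of $i(I)$ between $\mu$ and $i(\mu)$ have their generics determined by fusion at each step, and also at direct limits.

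Uniqueness gives $j_{U_k}=$ the canonical lift of $i_k$. Compare $M_0,M_1$ in $V$ by the comparison of $L[\vec E]$-iterates to get $k_0,k_1$ with $k_0\circ i_0=k_1\circ i_1$. Lift everything canonically to $V[G]$; uniqueness makes the lifted square commute. The comparison maps are internal iterations of $M_k$, so their lifts are internal ultrapowers of $M_k[H_k]$ by countably complete ultrafilters, which yields UA in $V[G]$.

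\textbf{Main obstacle.} The main obstacle is this uniqueness/existence lemma for arbitrary normal iterations, especially ones that apply extenders with critical points in the tail or between points of $I$. I would first prove it for single extender ultrapowers by induction on the critical point, using the spacing to locate $i(I)$. I would then extend it along iterations by induction on length, with direct limits handled by the fact that any condition lives in the image at some stage.
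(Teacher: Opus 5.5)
Your architecture is the paper's. Strongness comes from the canonical lift of a strong extender $E(\kappa,\beta)$, since suitable spacing applied in $M$ with $o^{M}(\kappa)=\beta$ puts $\min(j(I)\setminus\kappa)$ above $\beta$. UA comes from four steps: $j_U\uhr V$ is a normal iterate; every normal iterate $i$ lifts because $i[G]$ generates a generic; uniqueness; and comparison in $V$ followed by lifting. Your induction on the length of the iteration is a legitimate, simpler substitute for the paper's Support Lemma reduction, since every dense set in a direct limit $M_\lambda$ is the image of one in some earlier $M_\zeta$. It works for iterations of any length, so you need not restrict to finite or countable ones. Two steps, however, do not go through as written.

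\textbf{Definability of $j_{U_k}\uhr V$.} Comparing $M_0,M_1$ inside $V$, and proving $k_0\circ i_0=k_1\circ i_1$, requires $i_k=j_{U_k}\uhr V$ to be a class of $V$. Your factorization argument does not establish this. The critical point $\mu$ is measurable in $V$, hence a limit of $I$. So $\po_\mu$ is in general not small relative to $\mu$, and $\po$ does not split at $\mu$ into a small part and a part closed beyond $\mu$. A Hamkins-type gap argument would need a nontrivial factor strictly below $\mu$, which the hypotheses do not provide.

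The paper's replacement (Theorem~\ref{Thm:EveryEmbeddingIsAFiniteIteration}) is short. If $\pi=j_U\uhr V$ were infinite, the sequence of its first $\omega$ critical points would lie in $\Ult(V[G],U)=N[j_U(G)]$ by countable closure. It would then lie in $N$, because $j_U(\po)$ is $\sigma$-closed in $N$, and this is impossible for an iteration. So $\pi$ is finite, coded by finitely many extenders, and definable in $V$. This is also what makes the commutativity provable, which you assert without argument. The least counterexample over all pairs of finite iterations is parameter-free definable in $L[\vec E]$, so both composites send it to the same element of $N$.

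\textbf{Passing from the comparison to UA.} The lifted comparison maps are not internal ultrapowers of $M_k[H_k]$ by single countably complete ultrafilters; they are iterations by lifted extenders. What you do get is that each step is an internal extender ultrapower, so the lifts are \emph{close} embeddings. To finish you still need Goldberg's Lemma~\ref{Lem:ComparisonByCloseEmbeddings}. Collapse the hull of $j^*[V[G]]\cup\{a_0,a_1\}$ in the common target, where $a_k$ is the image of $[\mathrm{id}]_{U_k}$. Closeness then guarantees that the derived ultrafilters satisfy $W_1\in M_{U_0}$ and $W_0\in M_{U_1}$. Without this step you only have a comparison by internal iterations, which is not yet UA.
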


As a consequence of Theorem \ref{Thm:MainThmForStrong}, we prove the following.

\begin{corollary}\label{Cor:UA+Strong+FarFromHOD}
    It is consistent with ${\rm UA}$ that there exists a strong cardinal $\kappa$ such that for every $X\in V_{\kappa}$, $V\neq {\rm HOD}_X$.
\end{corollary}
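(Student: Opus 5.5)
Start from $V=L[\vec E]$, Mitchell's model for a strong cardinal $\kappa$, and apply Theorem \ref{Thm:MainThmForStrong} to a suitably spaced discrete product forcing in which every coordinate adds a Cohen subset. Concretely, define $I$ by recursion as a discrete set of inaccessibles that is unbounded in every $\alpha<\kappa$ with $o^{\vec E}(\alpha)>0$ and in the class of ordinals. Arrange also that for $\alpha'<\alpha\in I$ we have $o^{\vec E}(\alpha')<\alpha$; for instance, take successive elements of $I$ to be inaccessible closure points of the function $\alpha'\mapsto o^{\vec E}(\alpha')$. One needs $I$ to be unbounded in the limit points as well, which requires some care: points $\alpha$ of positive Mitchell order are limits of inaccessibles that are closure points, so one can interleave. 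Set $\qo_\alpha=\mathrm{Add}(\alpha,1)$. This is $\alpha$-closed and has rank below the next element of $I$. Let $\po=\prod_{\alpha\in I}\qo_\alpha$ (with $I$ a proper class, or cut off at some large inaccessible above $\kappa$; a set-sized $I$ unbounded in $\kappa$ with one extra coordinate suffices). By Theorem \ref{Thm:MainThmForStrong}, in $V[G]$ the cardinal $\kappa$ is strong and ${\rm UA}$ holds.

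It remains to show that $V[G]\neq{\rm HOD}_X^{V[G]}$ for every $X\in V_\kappa^{V[G]}$. Fix $X\in V_\kappa^{V[G]}$. Since $\kappa$ is a limit of $I$, there is $\beta\in I$ with $X\in V[G\uhr(I\cap\beta)]$. The plan is to show that the generic $g_\beta\subseteq\beta$ on coordinate $\beta$ is not in ${\rm HOD}_X^{V[G]}$. Factor $\po\cong\po_{<\beta}\times\qo_\beta\times\po_{>\beta}$. The main tool is weak homogeneity of Cohen forcing together with $V$ being definable in $V[G]$. Since $V=L[\vec E]$ is the core model and the forcing is small relative to the relevant structure, $V$ should be a definable class in $V[G]$ (for instance, by ground model definability), so ${\rm HOD}_X^{V[G]}\subseteq V[G\uhr(I\cap\beta)][G_{>\beta}]$. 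This uses that $\qo_\beta$ is weakly homogeneous, that the names involved are definable from $X$ and ordinals, and that $X$ has a name in $\po_{<\beta}$. Now let $\varphi(g_\beta,X,\vec\xi)$ define $g_\beta$. Work over $V[G_{<\beta}][G_{>\beta}]$, where $\qo_\beta$ remains Cohen forcing (by product forcing). Automorphisms of $\qo_\beta$ fix the check names of $X$ and of the ordinals, so every condition decides each statement $\check\gamma\in\dot g_\beta$ uniformly. This contradicts genericity.

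The main obstacle is making the homogeneity argument correct. Definability of $\vec E$, and hence of $V$, inside $V[G]$ uses ordinals but possibly also the name; the ground model definability theorem gives $V$ definable from a parameter in $V$, and ${\rm HOD}_X$ allows ordinal parameters only, but $V=L[\vec E]$ with $\vec E$ a set-sized or class-sized sequence. One therefore needs $\vec E$ to be definable in $V[G]$ from ordinals. I would try Laver--Woodin ground definability, whose parameter is $\mathcal P(\delta)^V$ for a $\delta$ of size $|\po|^+$. That parameter is not ordinal-definable, so I would instead avoid needing $V$ at all. The symmetric argument works directly: for any automorphism $\pi$ of $\po$ acting only on coordinate $\beta$, $\pi$ fixes the $\po$-names for $X$ (being $\po_{<\beta}$-names) and fixes every formula with ordinal parameters. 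Since $\po$ is weakly homogeneous on coordinate $\beta$, ${\rm HOD}_X^{V[G]}\subseteq V[G_{-\beta}]$, where $G_{-\beta}$ is the generic with coordinate $\beta$ removed, and $g_\beta\notin V[G_{-\beta}]$.
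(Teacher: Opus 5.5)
Your proposal is correct and is essentially the paper's argument: Theorem \ref{Thm:MainThmForStrong} and Corollary \ref{Cor:PreserviungStrongness} give UA and strongness, and then weak homogeneity of the Cohen coordinates lying beyond a stage that already contains $X$ keeps a new generic out of ${\rm HOD}_X^{V[G]}$. The only variation is that you isolate the single coordinate $\beta$ and get ${\rm HOD}_X^{V[G]}\subseteq V[G_{-\beta}]$, whereas the paper uses the whole tail $\po\setminus\gamma$ and gets ${\rm HOD}_X^{V[G]}\subseteq V[G_\gamma]$. You should also drop two detours. The hedges about a proper-class $I$ or an extra coordinate above $\kappa$ fall outside Definition \ref{Def:SuitablySpaced} and Theorem \ref{Thm:MainThmForStrong}, so just take $I\subseteq\kappa$ as in Remark \ref{Remark:ExistenceOfSuitablySpacedSets}. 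The ground-model-definability detour is unnecessary too, since the homogeneity argument does not require the ground to be definable.
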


Let us make some final remarks on the role of discreteness in this work. In \cite{FriedmanMagidor,aptercummings2023normalmeasuresonlargecards, BenNeriaKaplan}, the self-coding posets mentioned in a previous paragraph are used to secure the generic at coordinates that cannot be controlled by fusion. The significance of the discreteness assumption is that it allows us to dispense with this self-coding machinery, since the coordinates that would otherwise require coding are no longer active forcing coordinates. This has two advantages. First, it removes the mechanism that made the generic information recoverable in ${\rm HOD}$ in the construction of \cite{BenNeriaKaplan}. Second, the realization of the self-coding construction required an extensive use of the fine structure of the underlying extender model, which is avoided here.\footnote{Fine structure was used substantially, for example, to ensure that the entire forcing construction preserves cardinals. We direct the interested reader to \cite{BenNeriaKaplan} for more details.} Thus, although discreteness restricts the class of forcings under consideration, it provides a remarkably simple alternative to self-coding.

The structure of the paper is as follows. 

\begin{itemize}
    \item In Section \ref{Sec:BackgroundUAandEPA}, we provide a short background on the principles {\rm UA} and its generalization ${\rm EPA}$.
    \item In Section \ref{Sec:DiscreteProductForcing}, we introduce discrete product forcings and establish their basic properties.
    \item In Section \ref{Sec:Measurable}, we prove that discrete product forcings preserve {\rm UA} when forcing over Kunen's model $V=L[U]$; this allows us to illustrate some of our main ideas in the simplest possible setting. 
    \item In Section \ref{Sec:Strong}, we force over Mitchell's canonical inner model for a strong cardinal, which is constructed from a coherent sequence of nonoverlapping extenders. The main results in this section are Theorem \ref{Thm:MainThmForStrong} and Corollary \ref{Cor:UA+Strong+FarFromHOD}.
\end{itemize}

Our notation is standard, with the sole exception of the convention that, in forcing, $p\geq q$ means that $p$ extends $q$, i.e., that $p$ provides more information than $q$. When taking an ultrapower by a $\sigma$-complete ultrafilter $U$, we denote the corresponding elementary embedding by 
$j_U\colon V\to M_U\simeq \Ult(V,U)$.
When we wish to emphasize that $U$ is a measure in a model $N$ and that the ultrapower is taken over $N$, we use the notation
$j_U^N\colon N\to M_U^N\simeq \Ult(N,U)$. We adopt analogous conventions for extender ultrapowers.

\section{{\rm UA} and {\rm EPA}}\label{Sec:BackgroundUAandEPA}

In this section, we survey Goldberg's Ultrapower Axiom and the Extender Power Axiom. 

\begin{itemize}
    \item The \textbf{Ultrapower Axiom} ({\rm UA}) is the assertion that for every pair of $\sigma$-complete ultrafilters $U_0, U_1$ there are $\sigma$-complete ultrafilters $W_0\in \Ult(V, U_1)$, $W_1\in \Ult(V, U_0)$ such that $M^{M_{U_0}}_{W_1} = M^{M_{U_1}}_{W_0}$ and $j^{M_{U_0}}_{W_1}\circ j_{U_0} = j^{M_{U_1}}_{W_0}\circ j_{U_1}$.
    \item The \textbf{Extender Power Axiom} ({\rm EPA}) is the assertion that for every pair of extenders $E_0, E_1$ there are extenders $F_1\in M_{E_0}$, $F_0\in M_{E_1}$ such that $M^{M_{E_0}}_{F_1} = M^{M_{E_1}}_{F_0}$ and $j^{M_{E_0}}_{F_1}\circ j_{E_0} = j^{M_{E_1}}_{F_0} \circ j_{E_1}$. 
\end{itemize}

{\rm UA} holds in all the currently known canonical inner models, and is expected to be consistent with every large cardinal axiom.

{\rm EPA} holds in the minimal canonical inner model with a strong cardinal. However, unlike {\rm UA}, {\rm EPA} is known to be inconsistent with stronger assumptions:

\begin{theorem}[Goldberg, { \cite[Theorem 2.3]{GoldbergGeneralizationsUA}}]\label{Thm:GoldbergExtenderAxiomCanBeInconsistent}
    Assume that $\kappa$ is $\kappa$-compact.\footnote{That is, every $\kappa$-complete filter on $\kappa$ can be extended to a $\kappa$-complete ultrafilter.} Then {\rm EPA} doesn't hold.
\end{theorem}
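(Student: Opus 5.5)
The plan is to argue by contradiction, comparing a $\kappa$-complete extender ultrapower with a suitably chosen second extender and then tracking where $\kappa$ is sent along the two sides of any purported comparison. Assume $\kappa$ is $\kappa$-compact and that {\rm EPA} holds.

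\textbf{Step 1: a cofinality-witnessing ultrafilter.} First use $\kappa$-compactness to obtain a $\kappa$-complete ultrafilter $U$ on $\kappa$ that is \emph{weakly normal} or \emph{uniform} and that realizes a prescribed type. The natural choice is to extend the $\kappa$-complete filter generated by the club filter together with the sets $\{\alpha<\kappa : \mathrm{cf}(\alpha)=\omega\}$, or some similar definable property. The aim is an ultrapower $j_U$ with $\sup j_U[\kappa]<j_U(\kappa)$ and with $\mathrm{cf}^{M_U}([\mathrm{id}]_U)=\omega$.

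More to the point, $\kappa$-compactness yields many distinct ultrafilters. I would use them to produce two extenders $E_0$ and $E_1$ whose ultrapowers disagree about some canonically definable object. The most promising candidate is the \emph{Ketonen order} or the \emph{seed order}, exploiting the fact that {\rm EPA} implies comparison holds for extenders as well as for ultrafilters.

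\textbf{Step 2: the Ketonen-type contradiction.} Goldberg's standard argument shows that {\rm UA} implies the seed order on uniform countably complete ultrafilters is linear and wellfounded. Under {\rm EPA} the analogous order on extenders should likewise be linear. I would show this linearity by comparing $j_{E_0}$ and $j_{E_1}$ through extenders $F_1\in M_{E_0}$ and $F_0\in M_{E_1}$ with the same target and the same composite embedding, and then reading off which seed is smaller.

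Next, $\kappa$-compactness gives $2^{2^\kappa}$ distinct $\kappa$-complete ultrafilters on $\kappa$. Crucially, it also gives extenders with arbitrary long "filter-like" behavior. Concretely, the $\kappa$-complete fine filter on $P_\kappa(\lambda)$ extends to a fine $\kappa$-complete ultrafilter, so $\kappa$ is $\lambda$-strongly compact for all $\lambda$. The key is to exhibit an extender $E$ such that comparing $E$ with a suitable extender-image of itself is impossible. The idea is to take $E_1=$ the extender derived from $j_{E_0}$ restricted to $j_{E_0}(\kappa)$, defined in $V$, and to argue that comparison forces $j_{F_1}(j_{E_0}(\kappa))$ to be computed two incompatible ways.

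\textbf{Main obstacle.} The heart of the proof is designing the pair $E_0,E_1$ so that the condition $j_{F_1}\circ j_{E_0}=j_{F_0}\circ j_{E_1}$ becomes contradictory. Here extenders differ from ultrafilters: an extender comparison allows $F_i$ to be extenders of long length inside the ultrapowers, so ultrafilter wellfoundedness arguments do not transfer directly.

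My first attempt would exploit that extenders are closed under "restriction to a generator set." Using $\kappa$-compactness, I would build an extender $E$ with $M_E$ closed under $\kappa$-sequences failing in a controlled way, for example $\mathrm{cf}(\sup j_E[\lambda])<\kappa$ for a regular $\lambda$, which strong compactness supplies. I would compare it with a strongness-type extender $E'$ whose ultrapower computes $\sup j_{E'}[\lambda]$ continuously.

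Then I would count cofinalities in the common model $N$. Write $k_0=j_{F_1}\circ j_{E_0}$ and $k_1=j_{F_0}\circ j_{E_1}$; the comparison hypothesis says $k_0=k_1$, so $\sup k_0[\lambda]=\sup k_1[\lambda]$. This common supremum must have $N$-cofinality computed both as the image of a small cofinality, via the $E_0$ side, and as something of cofinality at least $\kappa$, via the $E_1$ side plus the discontinuity analysis of extender embeddings at regular cardinals. The resulting mismatch yields the contradiction.

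Verifying that discontinuity behavior, namely how $j_F$ for an internal extender $F$ acts on sups of cofinality $<\mathrm{crit}$, is the step I expect to be delicate.
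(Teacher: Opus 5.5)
The paper gives no proof of this statement; it is quoted from Goldberg. Your proposal therefore has to stand on its own, and as written it does not. It is a list of candidate strategies, and the step that carries the content is never carried out: exhibiting a specific pair of extenders $E_0,E_1$, built using $\kappa$-compactness, for which no internal $F_1\in M_{E_0}$, $F_0\in M_{E_1}$ can satisfy $j^{M_{E_0}}_{F_1}\circ j_{E_0}=j^{M_{E_1}}_{F_0}\circ j_{E_1}$.

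Step 1 uses $\kappa$-compactness only for things any measurable already gives. $\sup j_U[\kappa]=\kappa<j_U(\kappa)$ holds for every $\kappa$-complete ultrafilter on $\kappa$. A uniform $W$ with $\mathrm{cf}^{M_W}([\mathrm{id}]_W)=\omega$ is obtained by pushing a normal measure forward along $\alpha\mapsto\alpha+\omega$.

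Step 2 asserts without argument that a seed order on extenders ``should'' be linear under EPA. Even granting this, linearity together with the existence of $2^{2^\kappa}$ ultrafilters is not contradictory. More generally, any argument using only consequences that EPA shares with UA would refute UA plus a $\kappa$-compact cardinal, which the paper notes is not expected. The one concrete candidate, $E_1$ derived from $j_{E_0}$ with length $j_{E_0}(\kappa)$, is vacuous: when $\mathrm{lh}(E_0)\le j_{E_0}(\kappa)$ it yields the same ultrapower as $E_0$, so the comparison is trivial.

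The cofinality plan at the end fails on three concrete points.

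\begin{enumerate}
\item $\kappa$-compactness concerns filters on $\kappa$ only, and the fine filter on $P_\kappa(\lambda)$ is not one of them. Nor can a pushforward of an ultrafilter on $\kappa$ be fine on $P_\kappa(\lambda)$ for $\lambda>\kappa$: a measure-one set of size $\kappa$ has union of size $\kappa$. So you do not get $\lambda$-strong compactness for all $\lambda$, and indeed a $\kappa$-compact cardinal need not be strongly compact.

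\item ``$\mathrm{cf}(\sup j_E[\lambda])<\kappa$ for regular $\lambda$'' is impossible. For regular $\lambda\ge\kappa$ the $V$-cofinality of $\sup j_E[\lambda]$ is $\lambda$. Any cofinal $c\colon\mu\to\sup j_E[\lambda]$ in $M_E$ lies in $V$, so $\mathrm{cf}^V(\mu)=\lambda$ and hence $\mathrm{cf}^{M_E}(\sup j_E[\lambda])\ge\lambda$. The same argument applied to $k_0=k_1$ gives $\mathrm{cf}^N(\sup k_0[\lambda])\ge\lambda$, so there is no ``small cofinality'' side at all. Strong compactness only gives $M_E$-cofinality below $j_E(\kappa)$, which is a different statement.

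\item Even with correct values there is no built-in mismatch. $\delta=\sup k_0[\lambda]$ is a single ordinal, and the two sides give
\[
\mathrm{cf}^N(\delta)=\mathrm{cf}^N(\sup j_{F_1}[\mu_0])=\mathrm{cf}^N(\sup j_{F_0}[\mu_1]),
\qquad \mu_i=\mathrm{cf}^{M_{E_i}}(\sup j_{E_i}[\lambda]).
\]
Since $F_0,F_1$ are arbitrary internal extenders supplied by EPA, they can be discontinuous exactly at $\mu_i$ and shift these cofinalities.
\end{enumerate}

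To finish, you would need an invariant that is preserved by every internal extender embedding and on which the two sides provably disagree. The proposal does not identify one. The step you flag as ``delicate'' is in fact the whole proof.
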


\begin{lemma}[Goldberg]\label{Lem:UAFollowsfromEPA}
    ${\rm EPA}$ implies ${\rm UA}$.
\end{lemma}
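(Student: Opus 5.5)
Given two $\sigma$-complete ultrafilters $U_0,U_1$, the plan is to apply {\rm EPA} to their associated extenders and then show that the comparing extenders can be replaced by internal $\sigma$-complete ultrafilters. For $i<2$ let $\lambda_i$ be such that $U_i$ concentrates on $\lambda_i$, and let $a_i=[\mathrm{id}]_{U_i}$. Let $E_i$ be the extender of length $\lambda_i$ (or length $a_i+1$) derived from $j_{U_i}$. Then $M_{E_i}=M_{U_i}$ and $j_{E_i}=j_{U_i}$, since $U_i$ is recovered from $E_i$ through the seed $a_i$, which lies in the range of the factor map, and every element of $M_{U_i}$ has the form $j_{U_i}(f)(a_i)$. {\rm EPA} then gives extenders $F_1\in M_{U_0}$, $F_0\in M_{U_1}$ and a common model $N$ with
\[
k_0:=j^{M_{U_0}}_{F_1},\qquad k_1:=j^{M_{U_1}}_{F_0},\qquad k_0\circ j_{U_0}=k_1\circ j_{U_1}=:j .
\]

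The next step is to convert $F_1$ into an ultrafilter. Inside $M_{U_0}$, elements of $N$ have the form $k_0(g)(s)$ with $s$ a finite subset of the generators of $F_1$. Choose a single seed $b\in N$ such that
\[
N=\{\,k_0(g)(b): g\in M_{U_0}\,\}.
\]
Let $W_1=\{X\in M_{U_0}: b\in k_0(X)\}$. Then $\Ult(M_{U_0},W_1)$ factors into $N$ through $[g]\mapsto k_0(g)(b)$, and this factor map is surjective by the choice of $b$, hence the identity. So $M^{M_{U_0}}_{W_1}=N$ and $j^{M_{U_0}}_{W_1}=k_0$. We can take $b$ to be $k_1(a_1)$ together with the relevant ordinal sup: every element of $N$ is $k_1(h)(k_1(a_1))\cdot$ (generators of $F_0$), and those generators are in turn captured via $j=k_0\circ j_{U_0}$. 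Concretely, $N$ is generated over $\operatorname{ran}(j)$ by $k_0(a_0)$, $k_1(a_1)$ and the generators of $F_0$ and $F_1$. The point is that over $\operatorname{ran}(k_0)$, $N$ is generated by a set $S$ of seeds. One also needs $S$ to be coded by one element $b$, which fails if $F_1$ has proper-class many generators. Since extenders here are set-sized, $S$ is a set, and we take $b=S$ itself, or its enumeration.

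This last point is where care is needed. The conversion is fine even when $S$ is infinite, because $W_1$ is an ultrafilter on a set in $M_{U_0}$, say on $[\gamma]^{\mathrm{ot}(S)}$, and $k_0(g)(S)$ ranges over the hull. Coverage follows because every $k_0(g)(s)$ with $s\in[S]^{<\omega}$ is $k_0(g')(S)$ for a suitable $g'$. Symmetrically we obtain $W_0\in M_{U_1}$ with the same target $N$ and $j^{M_{U_1}}_{W_0}=k_1$, which gives the required commutativity.

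The remaining issue, and the main obstacle, is $\sigma$-completeness of $W_1$ in $M_{U_0}$. Here we use that $N$ is well-founded and that $k_0$ is an internal ultrapower embedding. If $\langle X_n\rangle\in M_{U_0}$ were a countable sequence of sets in $W_1$ with empty intersection, then $k_0(\langle X_n\rangle)=\langle k_0(X_n)\rangle$, since $k_0$ is continuous at $\omega$ because the critical point is above $\omega$. It would follow that $b\in\bigcap_n k_0(X_n)=k_0(\bigcap_n X_n)=\emptyset$, a contradiction. For this we need $\mathrm{crit}(k_0)>\omega$, which holds since extender ultrapowers have uncountable measurable critical point (or $k_0$ is the identity, in which case $W_1$ is principal). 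The same argument applies to $W_0$, completing the derivation of {\rm UA}.
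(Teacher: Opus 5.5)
There is a genuine gap at the step where you choose a single seed $b\in N$ with $N=\{k_0(g)(b)\colon g\in M_{U_0}\}$, i.e.\ where you try to show that the EPA comparison map $k_0=j^{M_{U_0}}_{F_1}$ is itself an internal ultrapower embedding. This is false in general.

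Your coverage claim fails. Suppose $e$ enumerates the generator set $S$ and $s=e[t]$. To write $k_0(g)(s)$ as $k_0(g')(e)$, you need the finite set of positions $t$ to lie in $\operatorname{ran}(k_0)$. But the generators of $F_1$ begin at $\mathrm{crit}(k_0)$, and their positions in $S$ typically go past $\mathrm{crit}(k_0)$, where $k_0$ is not onto.

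No other choice of $b$ can work either. Every $b\in N$ has the form $k_0(g)(s)$ with $s$ a finite set of generators. So if the generators of $F_1$ are cofinal in a limit length $\mathrm{lh}(F_1)$, then $b\in\mathcal{H}^N(\operatorname{ran}(k_0)\cup\alpha)$ for some $\alpha<\mathrm{lh}(F_1)$. This hull is the image of $\Ult(M_{U_0},F_1\upharpoonright\alpha)$, and it omits every generator above $\alpha$. Your argument has to work for whatever witnesses EPA supplies, and nothing excludes such an $F_1$: when $U_0=U_1$, any $F_0=F_1\in M_{U_0}$ witnesses EPA.

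The missing idea, which is how the paper argues, is to discard the generators rather than code them, passing to a smaller comparison model. Let $j=k_0\circ j_{U_0}=k_1\circ j_{U_1}$, let $\pi$ be the transitive collapse of $\mathcal{H}^N\bigl(j[V]\cup\{k_0(a_0),k_1(a_1)\}\bigr)$ onto $\bar N$, and put $i_0=\pi\circ k_0$ and $i_1=\pi\circ k_1$.
\begin{itemize}
\item These maps are defined, since $k_0(j_{U_0}(f)(a_0))=j(f)(k_0(a_0))$ lies in the hull (and symmetrically for $k_1$). They satisfy $i_0\circ j_{U_0}=i_1\circ j_{U_1}$.
\item Every element of $\bar N$ is $\pi\bigl(j(g)(k_0(a_0),k_1(a_1))\bigr)=i_0\bigl(j_{U_0}(h)(a_0)\bigr)\bigl(\pi(k_1(a_1))\bigr)$, where $h(x)(y)=g(x,y)$. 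So the single seed $\pi(k_1(a_1))$ generates $\bar N$ over $i_0[M_{U_0}]$.
\item The corresponding ultrafilter is $W_1=\{X\in M_{U_0}\colon k_1(a_1)\in k_0(X)\}$ on a suitable underlying set, because $\pi$ preserves membership on the hull.
\item $W_1$ lies in $M_{U_0}$ because $F_1\in M_{U_0}$ and $k_1(a_1)\in N\subseteq M_{U_0}$. Hence $i_0=j^{M_{U_0}}_{W_1}$.
\end{itemize}
Your $\sigma$-completeness argument then applies to $i_0$ unchanged, and symmetrically to $i_1$.
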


\begin{proof}
    Fix $\sigma$-complete ultrafilters $U_0, U_1$. By ${\rm EPA}$, there are extenders $E_1\in M_{U_0}$, $E_0\in M_{U_1}$ such that $M:=M^{M_{U_1}}_{E_0} = M^{M_{U_0}}_{E_1}$ and $j^*:=j^{M_{U_0}}_{E_1}\circ j_{U_0} = j^{M_{U_1}}_{E_0}\circ j_{U_1}$. Denote $a_0 = j^{M_{U_0}}_{E_1}([id]_{U_0})$ and $a_1 = j^{M_{U_1}}_{E_0}([id]_{U_1})$. Let $N$ be the transitive collapse of 
    $$H =\mathcal{H}^M( j^*[V]\cup \{a_0,a_1\} ), $$
    where $\mathcal{H}^M$ denotes the Skolem Hull inside $M$ (with respect to fixed in advance Skolem functions). Let $\pi\colon H\to N$ be the transitive collapse map. Since $j^{M_{U_1}}_{E_0}, j^{M_{U_0}}_{E_1}$ have their ranges contained in $H$, we can consider the embedding $i_0 = \pi\circ j^{M_{U_0}}_{E_1}\colon M_{U_0}\to N$ and $i_1 = \pi\circ j^{M_{U_1}}_{E_0}\colon M_{U_1}\to N$, and note that $i_0\circ j_{U_0} = i_1\circ j_{U_1}$. It thus remains to show that $i_0,i_1$ are internal ultrapower embeddings.

    In order to prove that $i_0$ is an internal ultrapower embedding, it suffices to prove that $N = \mathcal{H}^{N}\left( i_0[M_{U_0}]\cup \{ \pi(a_1) \} \right)$. Indeed, once we prove this, let $\lambda_1$ be the least cardinal in $M_{U_0}$ such that $ \pi(a_1)<i_0(\lambda_1)$, and define $W_1 = \{ X\subseteq \lambda_1 \colon \pi(a_1)\in i_0(X) \}$. Then $W_1\in M_{U_0}$ (since $i_0$ is definable in $M_{U_0}$), and $N\simeq \Ult( M_{U_0}, W_1 )$. Under this identification, $i_0( f )( \pi(a_1) ) = j_{W_1}(f)(  \pi(a_1))$ for every function $f\colon \lambda_1\to M_{U_0}$ in $M_{U_0}$. In particular, $i_0 = j_{W_1}$.

    Thus, it remains to prove that $N = \mathcal{H}^{N}\left( i_0[M_{U_0}]\cup \{ \pi(a_1) \} \right)$. The inclusion $(\supseteq)$ is easy, so let us concentrate on the other one. Fix $x\in N$. Since $N \simeq \mathcal{H}^M\left( j^*[V]\cup \{a_0,a_1\} \right)$, $x$ has the form $\pi( j^*(g)(a_0,a_1) )$ for some function $g\in V$. Namely 
    $$ x = i_0(  j_{U_0}(g)  ) \bigl( i_0\left( [ id ]_{U_0}\right), \pi(a_1) \bigr). $$
    Let $h\in V$ be a function whose image consists of functions, such that for every $a,b$, $h(a)(b) = g(a,b)$. It thus follows that
    $$ x = i_0\left( j_{U_0}(h)( [id]_{U_0} ) \right)( \pi(a_1) ),$$
    Since $j_{U_0}(h)( [id]_{U_0} )\in M_{U_0}$, we deduce that indeed $x\in \mathcal{H}^N( i_0[ M_{U_0} ]\cup \{ \pi(a_1) \} )$.

    The proof that $i_1$ is an internal ultrapower embedding is symmetric.
\end{proof}

\begin{definition}
    Let $M,N$ be inner models. An embedding $i\colon M\to N$ is called \textit{close} if for every $x\in N$, $i^{-1}[x]\in M$.
\end{definition}

The proof of Lemma \ref{Lem:UAFollowsfromEPA} really used only the fact that $j^{M_{U_0}}_{E_1}$ and $j^{M_{U_1}}_{E_0}$ are close embeddings (which follows from the fact that both are definable extender embeddings over their domains). We can generalize the proof to show:

\begin{lemma}[Goldberg]\label{Lem:ComparisonByCloseEmbeddings}
    Assume $U_0, U_1$ are $\sigma$-complete ultrafilters. Let $N$ be an inner model of $V$ such that there are close embeddings $j_0\colon M_{U_1}\to N$ and $j_1\colon M_{U_0}\to N$ with $j_0\circ j_{U_1} = j_1\circ j_{U_0}$. Then there are $\sigma$-complete ultrafilters $W_0\in M_{U_1}, W_1\in M_{U_0}$ such that $\Ult(M_{U_0}, W_1)\simeq \Ult(M_{U_1}, W_0)$ and $j^{M_{U_1}}_{W_0}\circ j_{U_1} = j^{M_{U_0}}_{W_1}\circ j_{U_0}$. 
\end{lemma}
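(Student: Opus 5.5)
Note the index convention in the statement: $j_1\colon M_{U_0}\to N$ and $j_0\colon M_{U_1}\to N$. The plan is to copy the proof of Lemma \ref{Lem:UAFollowsfromEPA}. There, the only property of the extender embeddings that was really used is that the collapsed hull embedding $i_0$ is an internal ultrapower of $M_{U_0}$. Closeness is exactly what makes the corresponding ultrafilter belong to $M_{U_0}$.

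\textbf{Step 1: the common hull.} Set $j^* = j_1\circ j_{U_0} = j_0\circ j_{U_1}$, $a_0 = j_1([id]_{U_0})$ and $a_1 = j_0([id]_{U_1})$. Let $H = \mathcal{H}^N(j^*[V]\cup\{a_0,a_1\})$, let $\pi\colon H\to N'$ be its transitive collapse, and define $i_1 = \pi\circ j_1$ and $i_0 = \pi\circ j_0$. These are well defined because $\operatorname{ran}(j_1)\subseteq H$: every element of $M_{U_0}$ has the form $j_{U_0}(f)([id]_{U_0})$, so its image under $j_1$ is $j^*(f)(a_0)\in H$. The symmetric argument gives $\operatorname{ran}(j_0)\subseteq H$. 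We also have $i_1\circ j_{U_0} = i_0\circ j_{U_1}$.

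\textbf{Step 2: $N'$ is generated over $i_1[M_{U_0}]$ by one point.} We show $N' = \mathcal{H}^{N'}(i_1[M_{U_0}]\cup\{\pi(a_1)\})$ by the currying argument from the proof of Lemma \ref{Lem:UAFollowsfromEPA}. Take $x = \pi(j^*(g)(a_0,a_1))$ and let $h(a)(b) = g(a,b)$. Then $x = i_1(j_{U_0}(h)([id]_{U_0}))(\pi(a_1))$.

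\textbf{Step 3: the derived ultrafilter.} Let $\lambda$ be least with $\pi(a_1) < i_1(\lambda)$; this exists since $\pi(a_1)$ is an ordinal, or we can code it as one. Define $W_1 = \{X\subseteq\lambda : \pi(a_1)\in i_1(X)\}$. By Step 2, the factor map $\Ult(M_{U_0},W_1)\to N'$ sending $[f]_{W_1}\mapsto i_1(f)(\pi(a_1))$ is onto, and hence $N'\cong \Ult(M_{U_0},W_1)$ with $i_1 = j_{W_1}^{M_{U_0}}$. The ultrafilter $W_1$ is $\sigma$-complete in $V$ because $N'$ is well-founded, being transitive.

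\textbf{Step 4: $W_1\in M_{U_0}$ (the main obstacle).} In Lemma \ref{Lem:UAFollowsfromEPA} this was automatic, since the embedding was definable over $M_{U_0}$. Here we use closeness instead. Let $y = \pi^{-1}(\pi(a_1)) = a_1 \in N$. Then
$$W_1 = \{X\in P(\lambda)^{M_{U_0}} : a_1\in j_1(X)\}.$$
To put this in $M_{U_0}$, let $Z = \{ Y\in j_1(P(\lambda)) : a_1\in Y\}\in N$. Closeness of $j_1$ gives $j_1^{-1}[Z]\in M_{U_0}$, and $j_1^{-1}[Z] = W_1$. One point to check is that the computation of $W_1$ through $i_1 = \pi\circ j_1$ agrees with the computation through $j_1$. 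It does, because $\pi$ is an isomorphism $H\to N'$ and $j_1(X)\in H$. The symmetric argument yields $W_0\in M_{U_1}$ with $N'\cong\Ult(M_{U_1},W_0)$ and $i_0 = j^{M_{U_1}}_{W_0}$.

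Composing, $j^{M_{U_1}}_{W_0}\circ j_{U_1} = j^{M_{U_0}}_{W_1}\circ j_{U_0}$, which proves the lemma.
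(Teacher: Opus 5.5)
Your proposal is correct and follows the paper's approach: copy the hull-and-collapse argument from Lemma \ref{Lem:UAFollowsfromEPA} with $j_0,j_1$ in place of the extender embeddings, and use closeness to put the derived ultrafilters in $M_{U_0}$ and $M_{U_1}$. The differences are cosmetic. You pull $W_1$ back as $j_1^{-1}[Z]$ for a set $Z\in N$, where the paper first notes that $\pi\circ j_1$ is close and pulls back through it; the two are equivalent, since $(\pi\circ j_1)^{-1}[x]=j_1^{-1}[\pi^{-1}(x)]$. Also, your $i_0,i_1$ carry the opposite labels from the paper's.
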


\begin{proof}
    Mimic the proof of Lemma \ref{Lem:UAFollowsfromEPA} by replacing the extender ultrapowers $j^{M_{U_1}}_{E_0}, j^{M_{U_0}}_{E_1}$ with $j_0, j_1$, respectively. The embeddings $i_0\colon M_{U_0}\to N, i_1\colon M_{U_1}\to N$ are close since $j_1, j_0$ are close. In particular, the measures 
    $$W_k = \{ X\subseteq \lambda_k \colon \pi(a_k)\in i_{1-k}(X)  \} = i_{1-k}^{-1}[ \{ Y\subseteq i_{1-k}(\lambda_k)\colon \pi(a_k)\in Y \} ]$$
    belong to $M_{U_{1-k}}$ for every $k\in \{0,1\}$. Thus, $M_{U_0}, M_{U_1}$ can be compared by internal ultrapowers.
\end{proof}

\section{Discrete product forcings}\label{Sec:DiscreteProductForcing}

\begin{definition}\label{Def:discrete-product-forcings}
Fix a cardinal $\kappa$. A \emph{discrete product forcing} of length $\kappa$ is a product forcing
$$\po=\prod_{\alpha\in I}\qo_\alpha,$$
where:
\begin{enumerate}
    \item $I\subseteq\kappa$ is a discrete set of inaccessible cardinals; that is, for every $\alpha\in I$, the set $I\cap\alpha$ is bounded in $\alpha$.
    
    \item For every $\alpha\in I$, $\langle\qo_\alpha,\leq_{\qo_\alpha}\rangle$ is an $\alpha$-closed partial order satisfying
    \[
    \text{rank}(\qo_\alpha)<\min(I\setminus(\alpha+1)).
    \]
    If $\min(I\setminus (\alpha+1))$ doesn't exist, require instead $\text{rank}(\qo_\alpha)<\kappa$.
\end{enumerate}
Thus, a condition $p\in\po$ is a function $
p:I\to V_\kappa$
 such that $p(\alpha)\in\qo_\alpha$ for every $\alpha\in I$. The ordering on $\po$ is the coordinatewise ordering: for $p,q\in\po$, $
p\leq q$ (i.e., $q$ extends $p$) if and only if $p(\alpha)\leq_{\qo_\alpha} q(\alpha)
$ for every $\alpha\in I$.
\end{definition}

\begin{remark}
For every $\gamma<\kappa$, we may factor $\po$ at $\gamma$, as usual, into its initial and tail parts. Let
$$\po_\gamma=\prod_{\alpha\in I\cap\gamma}\qo_\alpha$$
and
$$\po\setminus\gamma=\prod_{\alpha\in I\setminus\gamma}\qo_\alpha.$$
Then the map
$p\longmapsto \langle p\restriction(I\cap\gamma),p\restriction(I\setminus\gamma)\rangle$
is a canonical isomorphism between $\po$ and $\po_\gamma\times(\po\setminus\gamma)$. Accordingly, if $G\subseteq\po$ is generic over $V$, we write $G_\gamma=G\restriction\po_\gamma$ for the induced $\po_\gamma$-generic filter. The remaining part of the extension is then obtained by forcing over $V[G_\gamma]$ with the tail forcing $\po\setminus\gamma$, so that
$V[G]=V[G_\gamma][G\restriction(\po\setminus\gamma)].$

Since $\po$ is a product rather than an iteration, the tail forcing $\po\setminus \gamma$ is already an element of the ground model and is unchanged when passing to the intermediate extension $V[G_\gamma]$. Moreover, the hypotheses of Definition \ref{Def:discrete-product-forcings} ensure that the forcing $\po\setminus \gamma$ is $\gamma$-closed.
\end{remark}

\begin{lemma}{\rm{(Discrete Fusion Lemma)}}\label{Lem:DiscreteFusion}
    Let $\kappa$ be a cardinal, $I\subseteq \kappa$ a set of inaccessible cardinals, and let $\po = \prod_{\alpha\in I}\qo_\alpha$ be a discrete product forcing satisfying the hypotheses of Definition \ref{Def:discrete-product-forcings}. Assume that $\la d(\alpha) \colon \alpha<\kappa \ra$ is a sequence of dense open subsets of $\po$. Then for every $p\in \po$ there is $p^*\in \po$ extending $p$ such that for every $\alpha<\kappa$ and $\beta\leq\sup(I\cap \alpha)$, the set 
    $$ \{ r\in \po_{\alpha} \colon r{}^\frown p^*\setminus \alpha\in d(\beta) \} $$
    is dense open above $p^*\uhr\alpha$ in $\po_{\alpha}$.
\end{lemma}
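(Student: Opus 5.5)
We build $p^*$ by a fusion along the increasing enumeration $\la \alpha_\xi \colon \xi<\theta\ra$ of $I$, fixing the coordinate $\alpha_\xi$ for good at stage $\xi+1$. At stage $\xi$ we keep a condition $p_\xi$ extending $p$. Once a coordinate has been decided it is never changed, so the sequence $\la p_\xi\ra$ will be increasing. A coordinate $\gamma\ge \alpha_\xi$ of $p_\xi$ is only modified at a stage $\eta$ with $\alpha_\eta\le\gamma$.

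At limit stages $\xi$ we take $p_\xi$ coordinatewise. Below $\alpha_\xi$ we use the already fixed values. At a coordinate $\gamma\ge\alpha_\xi$ we take an upper bound in $\qo_\gamma$ of the $<\gamma$ many earlier values, which exists because $\qo_\gamma$ is $\gamma$-closed; the number of earlier stages is $|\xi|<\gamma$ since $\gamma$ is inaccessible and $\xi\le\gamma$. The coordinatewise limit $p^*$ at the end is well defined because each coordinate stabilizes.

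The key step is the successor step $\xi\to\xi+1$, which handles coordinate $\alpha:=\alpha_\xi$. Let $\delta=\min(I\setminus(\alpha+1))$, so that $\po_{\alpha+1}=\po_\alpha\times\qo_\alpha$. Discreteness gives $\sup(I\cap\alpha)<\alpha$, and $\po_{\alpha+1}$ has rank below $\delta$, hence size $<\delta$. We must secure the density requirement for every ordinal $\alpha'$ with $I\cap\alpha'=I\cap(\alpha+1)$, i.e.\ $\alpha'\in(\alpha,\delta]$. We must also secure the requirement at $\alpha$ itself for $\beta\le\sup(I\cap\alpha)$; that case is handled at the previous stage, since $I\cap\alpha$ then has a maximum or is handled by the limit stage. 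So the relevant $\beta$ range over $\beta<\delta$, or $\beta\le\sup(I\cap\alpha')$. Enumerate all pairs $(r,\beta)$ with $r\in\po_{\alpha+1}$ and $\beta$ in that range; there are fewer than $\delta$ many, using that $\delta$ is inaccessible. Then extend the tail $p_\xi\setminus\delta$ by a run of length $<\delta$ using $\delta$-closure of $\po\setminus\delta$. At each step, if some $r'\ge r$ in $\po_{\alpha+1}$ and some extension $t$ of the current tail satisfy $r'{}^\frown t\in d(\beta)$, we replace the tail by $t$. Here $d(\beta)$ is open and dense, so such $r',t$ always exist: first extend $r$ to $r'$ with $r'{}^\frown t\in d(\beta)$, and record that $r'$ has this property.

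For the final density check, given $\alpha'$ and $r$ extending $p^*\uhr\alpha'$, we write $r$ as a lower part of size $<\delta$ together with $\po_{\alpha+1}$ coordinates. We then apply the recorded $r'$ and use openness of $d(\beta)$ together with the fact that later tails only extend.

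The main obstacle is the bookkeeping at the successor step. One must make the density witnesses $r'$ live entirely in $\po_{\alpha'}$ while only the tail above $\alpha'$ is shrunk. One must also make sure that coordinates in $[\alpha'... )$ between $\alpha$ and $\delta$ contain no forcing, since $I\cap(\alpha,\delta)=\emptyset$. That is exactly why $\po_{\alpha'}=\po_{\alpha+1}$ for all such $\alpha'$, and it makes the counting $<\delta$ work. I would first verify that openness of $d(\beta)$ upgrades the density from the recorded $r'$ to dense openness above $p^*\uhr\alpha'$.
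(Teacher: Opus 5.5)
Your successor step is correct. It is exactly the paper's stage $i=\alpha_{\xi+1}$: lower part $\po_i=\po_{\alpha_\xi+1}$, with the tail $\po\setminus i$ shrunk using $i$-closure. It secures the requirement for all $\alpha\in(\alpha_\xi,\alpha_{\xi+1}]$ and $\beta\le\alpha_\xi$.

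The gap is at the elements of $I$ with no immediate predecessor in $I$, namely $\alpha_0=\min I$ and $\alpha_\xi$ for limit $\xi$. You say the requirement there ``is handled by the limit stage'', but your limit stage only takes coordinatewise upper bounds and does no density work. No successor stage can substitute for it:
\begin{itemize}
\item Stages $\eta+1$ with $\eta<\xi$ only treat $\beta\le\alpha_\eta<\mu:=\sup(I\cap\alpha_\xi)$.
\item Stages with $\eta\ge\xi$ have coordinate $\alpha_\xi$ in their lower part. Their density witnesses may therefore extend $p^*(\alpha_\xi)$, which the requirement at $\alpha_\xi$ does not allow.
\end{itemize}
By openness, the successor stages do yield the requirement at $\alpha_\xi$ for $\beta<\mu$. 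The top case $\beta=\mu$ is never secured, however, and neither is $p^*\in d(0)$, which is what the lemma demands for $\alpha\le\min I$. The top case is precisely the one used later in the paper: $d(\beta_\alpha)$ with $\beta_\alpha=\sup(I\cap\alpha)$ in Lemma \ref{Lem:PreservingCardinals}, and $\nu=\sup(I\cap\nu)$ in Claim \ref{Claim:DownwardsInductionMultivarFusion}.

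Here is a concrete failure. Take $\qo_\gamma=\text{Add}(\gamma,1)$, $p$ trivial, and $d(\beta)=\po$ for $\beta\neq\mu$. Let $d(\mu)$ be the set of conditions deciding the value of the $\alpha_\xi$-th Cohen function at the least point where the $\alpha_0$-th Cohen function equals $1$. Every instance of $d(\mu)$ in your runs can be met by extending $r\in\po_{\alpha_\eta+1}$ alone, so your construction is allowed to leave all tails unchanged. Then $p^*(\alpha_\xi)=\emptyset$, and no $r\in\po_{\alpha_\xi}$ satisfies $r{}^\frown p^*\setminus\alpha_\xi\in d(\mu)$.

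The repair is to run the same closure argument at stage $0$ and at every limit stage $\xi$, after forming the upper bound. Enumerate the pairs $(r,\beta)$ with $r\in\po_{\alpha_\xi}$ and $\beta\le\sup(I\cap\alpha_\xi)$. There are fewer than $\alpha_\xi$ of them, since $\sup(I\cap\alpha_\xi)<\alpha_\xi$ by discreteness and $\alpha_\xi$ is inaccessible. Then shrink the tail $p_\xi\setminus\alpha_\xi$, which is possible because $\po\setminus\alpha_\xi$ is $\alpha_\xi$-closed. This tail includes coordinate $\alpha_\xi$ itself, which is harmless since you freeze that coordinate only at stage $\xi+1$.

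The paper does exactly this, uniformly at every $i\in I\cup\{\kappa\}$, with lower part $\po_i$ and tail starting at $i$. Successor points of $I$, limit points of $I$, and $\min I$ are all handled by one step there, and your successor step is the special case $i=\alpha_{\xi+1}$.
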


\begin{proof}
    We construct a sequence of conditions $\la p_i \colon i
    \in I\cup \{\kappa \}\ra$, all of them extend $p$, such that the following properties hold:
    \begin{enumerate}
        \item \label{Item:Fusioncoherence}For every $i\in I\cup\{\kappa\}$ and $j\in I\cap i$, $p_j \leq p_i$ and $p_i\uhr (j+1) = p_j\uhr (j+1)$.
        \item \label{Item:DenseSetCapturing}For every $i\in I\cup \{\kappa\}$ and $\beta<\kappa$ such that
        $\beta\leq\sup(I\cap i)$,\footnote{It is possible that $\sup(I\cap i) = \kappa$ (e.g., $i=\kappa$ and $I$ is unbounded in $\kappa$), so the requirement $\beta<\kappa$ is needed.}  the set $\{r\in \po_{i} \colon r{}^\frown p_i\setminus i\in d(\beta)
        )  \}$ is a dense subset of $\po_i$ above $p_i\uhr i$.
    \end{enumerate}

    Suppose that $i\in I\cup 
    \{\kappa\}$ and the sequence $\la p_j \colon j\in I\cap i \ra$ has been constructed. We first construct a condition $q$ which is an upper bound of $\langle p_j \colon j<i\rangle$. Let $i^*\leq i$ be the least ordinal strictly greater than every element of $I\cap i$ (namely, if $\max(I\cap i)$ exists, $i^*$ is its successor; otherwise, $i^* = \sup(I\cap i)$). Note that $\po_i =\po_{i^*}$. Let $q\in \po$ be a condition such that
    \begin{itemize}
        \item $q\uhr i = q\uhr i^* = \bigcup_{j\in I\cap i} p_j\uhr (j+1)$.
        \item $q\setminus i$ an upper bound of $\la p\setminus i \ra{}^\frown\la p_j\setminus i \colon j\in I\cap i \ra$.
    \end{itemize}

    Let us elaborate on why such a condition $q$ exists. The first bullet point follows from the coherence requirement in clause \ref{Item:Fusioncoherence}. Moreover, the resulting initial segment extends $p\uhr i$, since every $p_j$ extends $p$. 

    For the second bullet point, if $i=\kappa$, then $\po\setminus i$ is trivial. Otherwise, $i\in I$, and the discreteness of $I$ implies that
    $\sup(I\cap i)<i$. In this case, $\po\setminus i$ is $i$-closed, while
    $|I\cap i|<i$. Hence the sequence of tails
    $$ \la p\setminus i\ra{}^\frown \la p_j\setminus i\colon j\in I\cap i\ra$$
    has an upper bound in $\po\setminus i$. This proves the existence of $q$.
    
    Next, we construct the condition $p_i \geq q$. Denote $\gamma^*:= |\po_{i}|$ and fix an enumeration $\la r_\gamma\colon \gamma<\gamma^* \ra$ of $\po_{i}$. Construct\footnote{The construction is vacuous in the case where $\po\setminus i$ is trivial.} an increasing sequence of conditions $\la s_
    \gamma\colon \gamma\leq\gamma^* \ra \in \po\setminus i$ such that
    \begin{itemize}
        \item $s_0 = q\setminus i$.
        \item for every limit $\gamma\leq \gamma^*$, $s_\gamma$ is an upper bound of $\la s_{\gamma'}\colon \gamma'<\gamma \ra$.
        \item for every $\gamma<\gamma^*$, $s_{\gamma+1}\geq s_\gamma$ satisfies that for every $\beta\leq\sup(I\cap i)$ below $\kappa$, there is some $r'_{\gamma,\beta}\geq r_\gamma$ such that $r'_{\gamma,\beta}{}^\frown s_{\gamma+1} \in d(\beta)$.
    \end{itemize}
    
    The second and third bullets above can be achieved by noting that $\po\setminus i$ is more than $|\po_i|\cdot|\sup(I\cap i)+1|$-closed. Indeed, as above, this is clear for $i=\kappa$, and if $i\in I$, then $I\cap i$ is bounded in $i$, $|\po_i|<i$, and $\po\setminus i$ is $i$-closed. For the third bullet, for a fixed $\gamma<\gamma^*$, recursively go through the relevant values of $\beta$, using the density of $d(\beta)$ to strengthen the tail $s_{\gamma+1}\geq s_{\gamma}$ so that a condition $r'_{\gamma,\beta}$ which extends $r_\gamma$ and satisfies 
    $$ r'_{\gamma, \beta}{}^\frown s_{\gamma+1}\in d(\beta). $$
    exists. At limit stages take upper bounds in $\po\setminus i$  (namely, $s_\gamma$ for $\gamma$ limit is an upper bound of $\la s_\gamma' \colon  \gamma'<\gamma\ra$). The openness of the sets $d(\beta)$ ensures that requirements obtained at earlier stages are preserved.

    Finally, define $p_i = q\uhr i {}^\frown s_{\gamma^*}$. By the above construction, $p_i$ satisfies that for every $\beta\leq \sup(I\cap i)$ with $\beta<\kappa$, the set $\{r\in \po_{i} \colon r{}^\frown p_i\setminus i\in d(\beta))  \}$ is a dense subset of $\po_i$ above $p_i\uhr i$. Indeed, given $r\in \po_i$, let $\gamma<\gamma^*$ be such that $r = r_\gamma$. Then $r'_{\gamma,\beta}\geq r$ satisfies $r'_{\gamma,\beta}{}^\frown p_i\setminus i\in d(\beta)$.

    This concludes the inductive construction. Let $p^* = p_\kappa$. Then $p^*\in \po$ is an upper bound of $\la p_i \colon i\in I \ra$. It remains to show that $p^*$ is as desired. Fix $\alpha<\kappa$. We argue that for every $\beta\leq\sup(I\cap \alpha)$,
    $$ \{ r\in \po_{\alpha} \colon  r{}^\frown p^*\setminus \alpha\in d(\beta) \}$$
    is dense open above $p^*\uhr \alpha$. Let $i = \min((I\cup \{\kappa\})\setminus \alpha)\geq \alpha$. Since $p^*\geq p_i$ and each $d(\beta)$, $\beta<\kappa$, is open, clause \ref{Item:DenseSetCapturing} implies that for every $\beta<\kappa$ with $\beta\leq\sup(I\cap i)$, the set
    $$ \{ r\in \po_{i} \colon  r{}^\frown p^*\setminus i\in d(\beta) \}$$
    is dense above $p^*\uhr i$. By the choice of $i$, $\po_i = \po_\alpha$, and thus for every $\beta\leq\sup(I\cap i)$ below $\kappa$,
    $$ \{ r\in \po_{\alpha} \colon  r{}^\frown p^*\setminus \alpha\in d(\beta) \} $$
    is dense above $p^*\uhr \alpha$, as desired.
\end{proof}

\begin{lemma}\label{Lem:PreservingCardinals}
    Assume {\rm GCH} and let $\kappa$ be an inaccessible cardinal. Let $\po = \prod_{\alpha\in I}\qo_\alpha$ be as in definition \ref{Def:discrete-product-forcings}. Assume that for every $\alpha\in I$, $\qo_\alpha$ preserves cardinals. Then $\po$ preserves cardinals.
\end{lemma}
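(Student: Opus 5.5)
Fix a cardinal $\lambda$; we show $\lambda$ remains a cardinal in $V[G]$, splitting into three cases according to the position of $\lambda$ relative to $\kappa$ and $I$. The main tool is the product factorization from the Remark: for $\gamma<\kappa$, $\po\cong\po_\gamma\times(\po\setminus\gamma)$, and $\po\setminus\gamma$ is $\gamma$-closed in $V$.

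\textbf{Case $\lambda>\kappa$.} Since each $\qo_\alpha\in V_\kappa$, we have $|\po|\le\kappa^{<\kappa}\cdot 2^{\kappa}=\kappa^+$ under GCH, as $\kappa$ is inaccessible. So $\po$ has the $\kappa^{++}$-c.c., and all cardinals $\ge\kappa^{++}$ are preserved. For $\kappa^+$ and $\kappa$ itself, I would use the Discrete Fusion Lemma \ref{Lem:DiscreteFusion}. Given a name $\dot f\colon\kappa\to\mathrm{Ord}$ and $p$, let $d(\beta)$ be the dense open set of conditions deciding $\dot f(\beta)$, and take $p^*$ as in the lemma. Then for each $\beta<\kappa$, choose $\alpha$ with $\beta\le\sup(I\cap\alpha)$ (possible if $I$ is unbounded; otherwise handle the top coordinate separately by closure). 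The value of $\dot f(\beta)$ is decided by $p^*$ together with some element of a maximal antichain of $\po_\alpha$, and $|\po_\alpha|<\kappa$. Hence $p^*$ forces $\dot f(\beta)$ into a ground-model set of size $<\kappa$. This gives a $\kappa$-covering property, so $\kappa$ stays regular: its cofinality is preserved. It also follows that $\kappa^+$ is preserved, because a surjection from $\mu<\kappa$ onto $\kappa^+$ would be covered by a ground-model set of size $\kappa$.

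\textbf{Case $\lambda\le\kappa$, not of the form $\alpha^+$ for $\alpha\in I$.} Here I factor at a suitable $\gamma$ with $\gamma\ge\lambda$ (or $\gamma$ just above the relevant part). Take $\gamma=\min(I\setminus\lambda)$ if $\lambda\notin I$; if $\lambda\in I$, take $\gamma=\lambda$. The tail $\po\setminus\gamma$ is $\gamma$-closed, so it adds no new sequences of length $<\gamma$ and preserves cardinals $\le\gamma$. For the head, $\po_\gamma=\po_{\gamma'}\times\qo_\delta$ where $\delta=\max(I\cap\gamma)$, or $\po_\gamma$ is a limit-support product when $I\cap\gamma$ is unbounded. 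In the latter case $\sup(I\cap\gamma)<\gamma$ by discreteness, so the head has size $<\gamma$ (as $\gamma$ is inaccessible) and a successor configuration always exists. Then I would argue by induction on $\delta\in I$ that $\po_{\delta+1}$ preserves cardinals. Write $\po_{\delta+1}=\po_\delta\times\qo_\delta$. By induction, $\po_\delta$ preserves cardinals and has size $<\delta$, so it preserves all cardinals. Forcing with $\qo_\delta$ over $V$ preserves cardinals by hypothesis. By Easton's lemma, $\qo_\delta$ remains $\delta$-distributive in $V^{\po_\delta}$, since $\po_\delta$ has the $\delta$-c.c. Conversely, $\po_\delta$ remains small after forcing with $\qo_\delta$. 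So the cardinals of $V^{\qo_\delta}$ are preserved by the small forcing $\po_\delta$, and the product preserves cardinals. Combining this with the tail gives the conclusion for $\lambda$.

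\textbf{Order of verification and main obstacle.} First I verify the combination step: the product of the head $\po_\gamma$ (size $<\gamma$, or rank $<\gamma$ up to the next element) with the $\gamma$-closed tail preserves all cardinals $\le\gamma$. Easton's lemma ensures the tail stays $\gamma$-distributive over $V^{\po_\gamma}$. Next I handle cardinals between $\delta=\max(I\cap\gamma)$ and $\gamma$. These are above $|\qo_\delta|$ except at the levels where $\qo_\delta$ acts, and there we use the hypothesis on $\qo_\delta$ together with the small-forcing argument. The main obstacle I expect is the case $\lambda=\kappa$ and $\kappa^+$ when $I$ is unbounded in $\kappa$. This is where the Discrete Fusion Lemma is essential, and I must check carefully that $\beta\le\sup(I\cap\alpha)$ can be arranged for each $\beta$ with a bounded $\alpha$.
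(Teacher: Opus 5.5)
There is a genuine gap at the limit points of $I$ below $\kappa$. The rest of your plan is close to the paper's: fusion at $\kappa$ to protect $\kappa^+$, the $\kappa^{++}$-c.c.\ above that, factoring plus Easton's lemma to reduce to an initial segment $\po_\gamma$, and the $\qo_\delta\times(\text{small forcing})$ argument for $\delta\in I$.

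Your induction says that $\po_\delta$ preserves cardinals ``by induction'' for every $\delta\in I$, and you assert that ``a successor configuration always exists.'' Both fail when $I\cap\gamma$ has no maximum. In that case $\delta^*:=\sup(I\cap\gamma)$ is a limit point of $I$, so $\delta^*\notin I$, and $\po_\gamma=\po_{\delta^*}$. This is not of the form $\po_{\delta'+1}$ for any $\delta'\in I$, so your inductive hypothesis says nothing about it.

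Cardinals below $\delta^*$ can still be handled by factoring and Easton's lemma, and $\delta^*$ is a limit cardinal. But nothing you wrote protects $(\delta^*)^+$:
\begin{itemize}
\item The bound $|\po_{\delta^*}|<\gamma$ only protects cardinals $\geq\gamma$, which is irrelevant because here $\lambda<\gamma$.
\item In general $|\po_{\delta^*}|=(\delta^*)^+$, so no chain-condition argument works. For example, with $\qo_\alpha=\mathrm{Add}(\alpha,1)$ and $\mathrm{cf}(\delta^*)=\omega$, GCH gives $|\po_{\delta^*}|=(\delta^*)^{\omega}=(\delta^*)^+$.
\end{itemize}
So a cardinal $\lambda=(\delta^*)^+$ with $\delta^*$ a limit point of $I$ falls in your second case, and your argument does not cover it.

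The paper treats exactly this case separately: when $\lambda=\nu^+$ and $I$ is unbounded in $\nu$, it reruns the discrete fusion argument for $\po_\nu$. The obstacle you flagged at $\kappa$ and $\kappa^+$ therefore recurs at every limit point of $I$ below $\kappa$, not only at $\kappa$.

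The repair is the one you already have at $\kappa$. The Discrete Fusion Lemma is stated for an arbitrary cardinal length, so apply it to $\po_{\delta^*}$ as a discrete product of length $\delta^*$. Then for each $\beta<\delta^*$, the value $\dot f(\beta)$ lies in a ground-model set of size at most $|\po_\alpha|<\delta^*$, for some $\alpha<\delta^*$ with $\beta\leq\sup(I\cap\alpha)$. Hence no $\po_{\delta^*}$-name maps $\delta^*$ onto $(\delta^*)^{+V}$, and this works whether or not $\delta^*$ is regular.

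With that limit step inserted, $\po_\delta$ preserves cardinals for every $\delta\in I$. Your product argument for $\po_{\delta+1}=\po_\delta\times\qo_\delta$ then also covers the cardinals $\delta^+$, $\delta\in I$, that you set aside as a third case but never treated explicitly.
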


\begin{proof}
    Assume by contradiction that $\po$ collapses cardinals, and let $\lambda$ be the least cardinal collapsed. By minimality, $\lambda$ is a successor cardinal in $V$. By {\rm GCH}, $|\po| \leq \kappa^{+}$, and thus $\po$ is $\kappa^{++}-c.c.$. This in particular implies that $\lambda\leq \kappa^+$. 
    
    Let us argue that $\lambda\leq \kappa$, namely $\kappa^+$ is preserved. This is a routine application of the Discrete Fusion Lemma (Lemma \ref{Lem:DiscreteFusion}). We can assume first that $I$ is unbounded in $\kappa$ (else, $|\po|<\kappa$ and clearly $\po$ preserves $\kappa^{+}$). Let $\dot{f}$ be a $\po$-name, and $p\in \po$ forces that $\dot{f}\colon \kappa\to \kappa^+$ is an increasing function. It suffices to prove that there is $q\geq p$ and $\gamma<\kappa^+$ such that $q\Vdash \text{Im}(\dot{f})\subseteq \check{\gamma}$. To see this, define, for every $\alpha<\kappa$, the set $d(\alpha)\subseteq \po$ consisting of conditions that decide the value of $\dot{f}(\alpha)$. For every $\alpha<\kappa$, let $\beta_\alpha = \sup(\alpha\cap I)$. By the Discrete Fusion Lemma, there exists $q\geq p$ such that for every $\alpha<\kappa$,  
    $$ \{  r\in \po_\alpha \colon r{}^\frown q\setminus \alpha\in d(\beta_\alpha) \} $$
    is a dense open subset of $\po_\alpha$ above $q\uhr \alpha$. Define, for each $\alpha<\kappa$,
    $$ \gamma_\alpha = \sup\{  \delta \colon \exists r\in \po_{\alpha} \   \left( r{}^\frown q\setminus \alpha \Vdash \dot{f}(\check{\beta}_\alpha) = \check{\delta} \right) \}. $$
    It follows that $q\Vdash \dot{f}(\check{\beta}_\alpha)\leq \check{\gamma}_\alpha$. Let $\gamma = (\sup \{\gamma_\alpha\colon \alpha<\kappa\})+1<\kappa^+$. Since $\dot{f}$ is forced to be an increasing function and $I$ is unbounded in $\kappa$, we deduce that $q\Vdash \mbox{Im}(\dot{f})\subseteq \check{\gamma}$, as desired. 

    This implies that $\lambda\leq \kappa$. Next, let us argue that $\po_\lambda$ must have collapsed $\lambda$. Denote $\mu = \min(I\setminus \lambda)$ (if no such $\mu$ exists, $\po\setminus \lambda$ is a trivial forcing). Factor $\po = \po_\lambda \times \po\setminus \lambda$. Note that $\lambda\notin I$ since $\lambda$ is a successor cardinal. In particular, $\po_\lambda \times \po\setminus \lambda$ is a product of a $\mu$-c.c. forcing with a $\mu$-closed forcing. By Easton's Lemma, in $V[G_{\lambda}]$,  $\po\setminus \lambda$ is $\mu$-distributive, and cannot collapse $\lambda$. 

    It thus follows that $\po_{\lambda}$ collapsed $\lambda$. Write $\lambda=\nu^+$. 
    
    If $\nu\in I$ then $I\cap \nu$ is bounded in $\nu$, by the discreteness of $I$. By the hypotheses of the lemma, $\qo_\nu$ preserves cardinals. Moreover, $\Vdash_{\qo_\nu} |\check{\po}_\nu|<\check{\nu}$. It thus follows that $\po_\lambda =  \qo_\nu \times \po_{\nu}$ preserves $\lambda$.

    Thus, assume that $\nu\notin I$. In particular, $\po_\lambda = \po_\nu$. 
    
    If $I$ is unbounded in $\nu$, the same discrete fusion argument given above shows that $\po_\nu$ cannot collapse $\lambda= \nu^+$.

    Thus, assume that $I\cap \nu$ is bounded in $\nu$. Denote $\delta  =\sup(I\cap \nu)<\nu$.
    
     If $\delta\in I$ (namely, $\delta = \max(I\cap \nu)$), then $\po_\lambda = \qo_\delta\times \po_\delta$, and since $\qo_\delta$ preserves cardinals and $\Vdash_{\qo_\delta} |\check{\po}_\delta|<\check{\delta}$ (by the discreteness of $I$), $\po_\lambda$ preserves $\lambda$. 
    
    If $\delta\notin I$, then $\po_\lambda=\po_\delta$. Since $\delta$ is a limit point of $I$, every iterand occurring in $\po_\delta$  has cardinality less than $\delta$. Hence, by {\rm GCH}, $|\po_\lambda|\leq 2^\delta\leq\nu$. Thus $\po_\lambda$ is $\nu^+$-c.c., and therefore preserves
    $\lambda=\nu^+$.
\end{proof}

We conclude this section by proving that discrete product forcings cannot be used to increase $2^\kappa$.

\begin{lemma}\label{Lem:PreservingGCH}
    Assume {\rm GCH}, let $\kappa$ be an inaccessible cardinal,  and let $\po = \prod_{\alpha\in I}\qo_\alpha$ be a discrete product forcing satisfying the hypotheses of Definition \ref{Def:discrete-product-forcings}. Then $\Vdash_{\po} 2^{\kappa} = \kappa^+ $.\footnote{The part of Lemma \ref{Lem:PreservingCardinals} showing that $\kappa^+$ is preserved remains valid (even when the forcings $\qo_\alpha$ collapse cardinals). Thus, the cardinal $\kappa^+$ appearing here is the same as $\kappa^+$ computed in $V$.}
\end{lemma}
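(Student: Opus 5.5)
We count nice names for subsets of $\kappa$, but only after using the Discrete Fusion Lemma (Lemma \ref{Lem:DiscreteFusion}) to make each name ``small'' below a single condition. A naive count fails: $|\po|$ may be $\kappa^+$, since $\po$ has $\kappa$ coordinates, each of size $<\kappa$, so $|\po|\le \kappa^\kappa=\kappa^+$ under GCH. The chain condition is then only $\kappa^{++}$, which gives $(\kappa^{+})^{\kappa^+}=\kappa^{++}$ nice names, too many. Fusion fixes this by showing each name is determined by $V_\kappa$-sized information.

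If $I$ is bounded in $\kappa$, then $|\po|<\kappa$ and there is nothing to prove. So assume $I$ is unbounded.

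Fix a name $\dot A$ with $\Vdash \dot A\subseteq\kappa$, and a condition $p$. For each $\beta<\kappa$, let $d(\beta)$ be the dense open set of conditions deciding ``$\check\beta\in\dot A$''. For $\alpha<\kappa$, set $\beta_\alpha$ to range over $\beta\le\sup(I\cap\alpha)$. Lemma \ref{Lem:DiscreteFusion} gives $p^*\ge p$ such that for every $\alpha<\kappa$ and every $\beta\le\sup(I\cap\alpha)$, the set of $r\in\po_\alpha$ with $r^\frown p^*\setminus\alpha\in d(\beta)$ is dense above $p^*\uhr\alpha$.

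Since $I$ is unbounded, every $\beta<\kappa$ satisfies $\beta\le\sup(I\cap\alpha)$ for some $\alpha<\kappa$. Choose such an $\alpha$ for each $\beta$. Then below $p^*$, the statement ``$\beta\in\dot A$'' is decided by $G_\alpha$ together with the fixed tail $p^*\setminus\alpha$. Concretely, $p^*$ forces
\[
\check\beta\in\dot A \iff \exists r\in G_\alpha\ \bigl(r^\frown p^*\setminus\alpha\Vdash\check\beta\in\dot A\bigr).
\]
Thus $\dot A$ is equivalent below $p^*$ to a name coded by the function
\[
\beta\mapsto\{r\in\po_{\alpha_\beta}: r^\frown p^*\setminus\alpha_\beta\Vdash\check\beta\in\dot A\}.
\]
Each value of this function is a subset of $\po_{\alpha_\beta}$, and $\po_{\alpha_\beta}\in V_\kappa$. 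So the function is a map $\kappa\to V_\kappa$. By GCH and the inaccessibility of $\kappa$, there are only $\kappa^\kappa=\kappa^+$ such functions.

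It follows that every $\po$-name for a subset of $\kappa$ is forced, by a dense set of conditions, to equal the evaluation of one of these $\kappa^+$ codes. Now suppose some $q$ forced $2^\kappa>\kappa^+$. Take names $\langle\dot A_\xi:\xi<\kappa^{++}\rangle$ forced by $q$ to be pairwise distinct, and a name for the map $\xi\mapsto$ the code realizing $\dot A_\xi$. In $V[G]$, each $A_\xi$ is the evaluation of a ground-model code by $G$, and the codes form a ground-model set of size $\kappa^+$. So $V[G]\models 2^\kappa\le|\kappa^+|$. By the footnote's remark and Lemma \ref{Lem:PreservingCardinals}'s argument, $\kappa^+$ is preserved, so $2^\kappa=\kappa^+$ exactly.

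The main obstacle is the reflection step: making sure that a single $p^*$ handles all $\beta$ simultaneously, with each $\beta$ captured at a level $\alpha<\kappa$. This is exactly the content of the fusion lemma with the clause $\beta\le\sup(I\cap\alpha)$. Beyond that, the only care needed is the density argument turning ``below $p^*$'' into a global statement about $V[G]$.
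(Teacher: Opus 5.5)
Your proof is correct and follows essentially the same route as the paper. Both apply the Discrete Fusion Lemma so that, below a single condition, the name is determined by a $\kappa$-sequence of objects in $V_\kappa$, and then count such sequences using GCH and the preservation of $\kappa^+$. The only difference is cosmetic: you record, for each $\beta$, the set of $r\in\po_{\alpha_\beta}$ forcing $\check\beta\in\dot A$, while the paper records mixed $\po_\alpha$-names for $\dot X\cap\sup(I\cap\alpha)$.
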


\begin{proof}
    This is another classical application of the Fusion Lemma,  adapted to the discrete setting. As usual, the nontrivial case is when $I$ is unbounded in $\kappa$. Assume that $\dot{X}$ is a $\po$-name for a subset of $\kappa$, and this is forced by some $p\in \po$. By the Discrete Fusion Lemma (Lemma \ref{Lem:DiscreteFusion}) there exists $q\geq p$ such that for every $\alpha<\kappa$, letting $\beta_\alpha = \sup(I\cap \alpha)$, the set
    $$ E_\alpha:=\{ r\in \po_\alpha \colon r\Vdash \exists X_\alpha\subseteq \beta_\alpha \ \left(q\setminus \alpha \Vdash \dot{X}\cap \beta_\alpha = X_\alpha \right) \}  $$
    is dense open in $\po_\alpha$ above $q\uhr \alpha$. Fix a maximal antichain $\mathcal{A}_\alpha\subseteq E_\alpha$ above $q\uhr \alpha$ so that every $r\in \mathcal{A}_\alpha$ has an associated $\po_\alpha$-name $\dot{X}_\alpha(r)$, for which 
    $$ r{}^\frown (q\setminus \alpha)\Vdash \dot{X}\cap \check{\beta}_\alpha = \dot{X}_\alpha(r). $$
    Let $\dot{X}^*_\alpha$ be the mix-name of the sequence $\la \dot{X}_\alpha(r) \colon r\in \mathcal{A}_\alpha  \ra$ (namely, $\dot{X}^*_\alpha$ is a $\po_\alpha$-name, such that for every $r\in \mathcal{A}_\alpha$, $r\Vdash \dot{X}^*_\alpha = \dot{X}_\alpha(r)$). It thus follows that $$q\Vdash \dot{X} = \bigcup_{\alpha<\kappa} \left( \dot{X}^*_\alpha \right)_{\dot{G}_{\alpha}}.$$

    Overall, we proved that for every $X\in \left( \mathcal{P}(\kappa) \right)^{V[G]}$ there is a sequence of names $\vec{\dot{X}}^* = \la \dot{X}^*_\alpha \colon \alpha<\kappa \ra\in {}^\kappa (V_\kappa)$ such that 
    $$X = \bigcup_{\alpha<\kappa} \left( \dot{X}^*_\alpha \right)_{G_\alpha}.$$ By {\rm GCH}, there are at most $\kappa^+$-many such sequences $\vec{\dot{X}}^*$. It thus follows that $\left| \left( \mathcal{P}(\kappa) \right)^{V[G]} \right| = \kappa^+$, as desired.
\end{proof}

\begin{remark}
Lemma \ref{Lem:PreservingGCH} demonstrates that forcing {\rm UA} together with the failure of {\rm GCH} at a measurable cardinal requires the use of non-discrete forcings. A forcing construction that achieves this appears in \cite{BenNeriaKaplan}. The proof  relies heavily on a combination of various fusion arguments and  fine-structure based forcing techniques. We refer the interested reader to \cite{BenNeriaKaplan} for further details.
\end{remark}

\section{A measurable cardinal}\label{Sec:Measurable}

Our main result in this section is the following theorem.

\begin{theorem}\label{Thm:ForcingOverLU}
    Assume that $V= L[U]$ and $\kappa$ is the unique measurable cardinal. Fix a discrete set of inaccessibles $I\subseteq \kappa$ and let $\po = \prod_{\alpha\in I}\qo_\alpha$ be a discrete product forcing, satisfying the hypotheses of Definition \ref{Def:discrete-product-forcings}. Let $G\subseteq \po$ be generic over $V$. Then in $V[G]$, $\kappa$ remains measurable and the Ultrapower Axiom holds.
\end{theorem}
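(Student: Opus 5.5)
The plan has three parts: lift the iterates of $U$ to $V[G]$ in a canonical way, show that every countably complete ultrafilter of $V[G]$ comes from such a lift, and then compare ultrapowers using Lemma \ref{Lem:ComparisonByCloseEmbeddings}.

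\emph{Step 1: the canonical lift.} First we lift $j=j_U\colon V\to M$. Since $\kappa\notin j(I)$ and $j(I)\cap\kappa=I$, the poset $j(\po)$ factors as $\po\times j(\po)\setminus\kappa$. The first element of $j(I)$ above $\kappa$ is larger than $\kappa^+$, so in $M$ the tail $j(\po)\setminus\kappa$ is $\kappa^{++}$-closed, and $M$ has only $\kappa^+$ dense sets for it. Hence a generic $H$ over $M[G]$ can be built in $V[G]$.

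The key point is that $H$ is forced, so the lift is canonical. Take a condition $p$ with $j(p)$ in the tail, and a dense set $D\in M[G]$ of the tail, say $D=j(d)(\kappa)$ evaluated by $G$. Apply Lemma \ref{Lem:DiscreteFusion} to $p$ and to the sequence $d$ of dense sets, viewed as dense open subsets of $\po$. This gives $p^*$ such that for each $\alpha$ the set of $r\in\po_\alpha$ with $r^\frown p^*\setminus\alpha\in d(\alpha)$ is dense above $p^*\upharpoonright\alpha$. Reflecting at $\kappa$ via $j$, we get $j(p^*)\setminus\kappa\in D$ modulo $G$. Thus $H=\{j(p)\setminus\kappa\colon p\in G\}$ upward closed is generic, and $j$ lifts to $j^*\colon V[G]\to M[j^*(G)]$ with $j^*(G)=G\times H$. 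In particular $\kappa$ stays measurable, witnessed by $U^*=\{X: \kappa\in j^*(X)\}$.

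The same argument applies to every finite or linear iteration $j_{0,\theta}$ of $U$, since each image of $\kappa$ is a non-element of the image of $I$ and the fusion argument transfers through each step. It also gives a Hamkins-style closeness: a lifted embedding is definable in $V[G]$ from the ground embedding, and preimages of sets are computed in $V[G]$.

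\emph{Step 2: classifying countably complete ultrafilters.} Let $W$ be a countably complete ultrafilter in $V[G]$, with ultrapower $k\colon V[G]\to N$. By the standard argument (e.g.\ via Kunen/Schindler on $L[U]$ ground-model restrictions), $k\upharpoonright V$ is an ultrapower of $V$ by a countably complete ultrafilter in $V$, because $V$ is definable in $V[G]$ via ground model definability. In $L[U]$, every such embedding is an internal iteration of $U$ followed by a map definable from finitely many ordinals, so $k\upharpoonright V=j_{0,n}$ for some finite $n$. Its generic $k(G)$ must then equal the canonical generic, since any $M_n$-generic for the tail containing $k``G$ must contain the fused conditions $j(p^*)$. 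So $k$ is the canonical lift of $j_{0,n}$, and every ultrapower of $V[G]$ is the canonical lift of an iterate of $V$.

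\emph{Step 3: comparison.} Given $U_0,U_1$ in $V[G]$ with lifts of $i_0,i_1$ (finite iterations of $U$), compare them in $V$ by further iterating $U$ to a common iterate $i\colon V\to M^*$ with $i=e_0\circ i_0=e_1\circ i_1$ (the iterates are linear, so we pad the shorter one). Then lift $e_0$ and $e_1$ canonically over $M_{U_0}$ and $M_{U_1}$. By the uniqueness of generics these lifts agree: both send $G$ to the canonical $i$-generic. They are close, being lifts of internal iterations. Lemma \ref{Lem:ComparisonByCloseEmbeddings} then yields UA.

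The main obstacle is Step 2: controlling $k\upharpoonright V$ and showing that the generic is forced, including for ultrafilters on large sets. The uniqueness argument via Lemma \ref{Lem:DiscreteFusion} is the crux.
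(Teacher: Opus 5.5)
Your proposal is correct in outline, but it does not follow the paper's proof of this theorem. What you give is essentially the embedding-theoretic strategy the paper reserves for Section \ref{Sec:Strong}, specialized to $L[U]$:
\begin{itemize}
\item restrict $k$ to $V$ and identify $k\upharpoonright V$ as a finite iterate $j_{0,n}$;
\item pin down $k(G)$ by uniqueness, using that $j_{0,n}[G]$ already generates an $M_n$-generic filter;
\item compare via close embeddings and Lemma \ref{Lem:ComparisonByCloseEmbeddings}.
\end{itemize}

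The paper instead works with ultrafilters. Lemma \ref{Lem:LiftsOfPowersOfU} shows directly that $j_{U^n}[G]$ generates a generic, by a multi-variable downward fusion: the Discrete Fusion Lemma, plus integrating the witnesses $r(\nu)$ into one condition via normality of $U$. Hence $U^n$ generates a $\kappa$-complete ultrafilter, which is $(U^*)^n$ by Corollary \ref{Cor:PreservingMeasureNormnalityAndPowers}. For an arbitrary $W$, the paper uses the fact $W\cap L[U]\in L[U]$ and Kunen's classification to get $W\cap L[U]\equiv_{RK}U^n$ via some $f$. The same $f$ then witnesses $W\equiv_{RK}(U^*)^n$, since $U^n$ generates $(U^*)^n$. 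So $V[G]$ is a Kunen-like model and UA is immediate, with no comparison lemma or closeness argument.

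Each route has its advantage. The paper's gives the sharper structural conclusion: every $\sigma$-complete ultrafilter is RK-equivalent to some $(U^*)^n$. Yours generalizes to nonoverlapping extender models, where no RK classification is available. The cost is that it needs the theorem that arbitrary embeddings of $L[U]$ in outer models are iterations, plus a finiteness argument. Your one-step-at-a-time lifting (reflecting a fused condition at $\kappa$, as in Lemma \ref{Lem:CanonicalLiftOfAnExtender}) is a valid substitute for the paper's single multi-variable fusion.

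The step you need to repair is the justification in Step 2. Definability of $V$ in $V[G]$ only makes $k\upharpoonright V\colon V\to k(V)$ a well-defined elementary embedding; it does not make it an internal ultrapower of $V$. What is actually needed is this:
\begin{itemize}
\item $k\upharpoonright V=j_{0,\theta}$ is an iteration of $L[U]$, by Kunen's theorem.
\item $\theta<\omega$. Otherwise the critical sequence $\langle\kappa_m\colon m<\omega\rangle$ lies in $N$, since ${}^\omega N\subseteq N$ in $V[G]$. It then lies in $L[k(U)]=M_\theta$, since $k(\mathbb{P})$ is $\sigma$-closed there. That contradicts the regularity of $\kappa_\omega$ in $M_\theta$.
\end{itemize}
This is exactly the argument of Theorem \ref{Thm:EveryEmbeddingIsAFiniteIteration}. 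Only after it do you know $k\upharpoonright V=j_{U^n}$, and then your uniqueness-of-generics argument goes through.

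Two smaller points on Step 1. First, your opening counting argument is wrong as stated: $M$ has $\kappa^{++}$, not $\kappa^+$, dense subsets of the tail. You immediately replace it with fusion, so it does no harm. Second, the Discrete Fusion Lemma only captures $d(\alpha)$ at points with $\alpha=\sup(I\cap\alpha)$. That suffices, because $\kappa$ is such a point of $j(I)$ when $I$ is unbounded; the bounded case is Levy--Solovay.
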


It is a well known fact due to Kunen (see \cite{KunenMeasures}) that every $\sigma$-complete ultrafilter in $L[U]$ is Rudin-Keisler equivalent to $U^n$ for some $n<\omega$. Let us analyze how such measures lift when forcing with a discrete product  forcing. The following proof is a simple modification of \cite[Theorem 1.4]{Kaplan}.

\begin{lemma}\label{Lem:LiftsOfPowersOfU}
    Assume that $\kappa$ is a measurable cardinal and $U\in V$ is a normal measure on $\kappa$. Let $I\subseteq \kappa$ be a discrete set of inaccessible cardinals, and assume that $\po = \prod_{\alpha\in I}\qo_\alpha$ is a discrete product forcing satisfying the hypotheses of Definition \ref{Def:discrete-product-forcings}. Then for every $1\leq n<\omega$, $U^n$ generates a measure on $[\kappa]^n$ in $V[G]$.
\end{lemma}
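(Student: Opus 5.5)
The plan is to prove the statement by lifting the $n$-fold iterated ultrapower. Let $j=j_{U^n}\colon V\to M$ be the $n$-th iterate of $U$, with critical sequence $\kappa_0=\kappa<\kappa_1<\dots<\kappa_{n-1}$, so that $U^n=\{A\subseteq[\kappa]^n\colon \la\kappa_0,\dots,\kappa_{n-1}\ra\in j(A)\}$. I will show that the upward closure $H$ of $j[G]$ in $j(\po)$ is $j(\po)$-generic over $M$. Then $j$ lifts to $j^*\colon V[G]\to M[H]$, and the ultrafilter derived from $j^*$ using the seed $\vec\kappa=\la\kappa_0,\dots,\kappa_{n-1}\ra$ extends $U^n$. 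It is also generated by $U^n$. Indeed, if $X=\dot X_G\subseteq[\kappa]^n$ and $\vec\kappa\in j^*(X)$, then there is $p\in G$ with $j(p)\Vdash\vec\kappa\in j(\dot X)$. Since $j(p)$ is a condition of $M$, this gives $A=\{s\colon p\Vdash \check s\in\dot X\}\in U^n$ and $A\subseteq X$.

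For the structure of $j(\po)$: since $I\subseteq\kappa$ and $j(I)\cap\kappa_i=j_{i}(I)$, each $\kappa_i$ lies outside $j(I)$ (it may only be a limit point of $j(I)$). In $M$, $j(\po)$ therefore factors into blocks $\prod_{\alpha\in j(I)\cap[\kappa_{i-1},\kappa_i)}$, together with the initial block $\po$ itself. The genericity of $H$ reduces to the following. For every dense open $D\in M$, write $D=j(f)(\vec\kappa)$ with $f\colon[\kappa]^n\to\{\text{dense open subsets of }\po\}$. I need some $p\in G$ and a condition in $G$ on $\po=j(\po)_{\kappa}$ which, glued to the tail of $j(p)$, lands in $D$.

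To get this, apply the Discrete Fusion Lemma (Lemma \ref{Lem:DiscreteFusion}) to the sequence
$$d(\beta)=\bigcap\{f(s)\colon s\in[\beta+1]^n\}.$$
This is a dense open set whenever $\po\setminus(\beta+1)$ is sufficiently closed relative to the number of such $s$. When that fails, I instead replace $d(\beta)$ by sets asserting that the condition restricted above $\beta$ works uniformly over all $s\subseteq\beta+1$, achieved by the same recursive closure argument inside the proof of the Fusion Lemma. By genericity I get $p^*\in G$ such that for all $\alpha$ and all $\beta\le\sup(I\cap\alpha)$, the set $\{r\in\po_\alpha\colon r^\frown p^*\setminus\alpha\in d(\beta)\}$ is dense above $p^*\uhr\alpha$. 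Applying $j$ and elementarity at $\alpha=\kappa_{n-1}$ and $\beta=\kappa_{n-1}$ (or at the appropriate supremum, using that $\kappa_{n-1}$ is a limit of $j(I)$ or that $j(I)$ is bounded below it), I obtain density of the set of $r\in j(\po)_{\kappa_{n-1}}$ with $r^\frown j(p^*)\setminus\kappa_{n-1}\in D$. I then repeat the argument downward through $\kappa_{n-2},\dots,\kappa_0$, using that $j(p^*)\uhr[\kappa_{i-1},\kappa_i)$ is the image of $p^*$ under the corresponding iterate. Each time this peels off one block, until I reach a dense subset of $\po$ in $V$, which $G$ meets.

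I expect the main obstacle to be exactly this downward induction across the blocks: making precise that the relevant dense sets on the middle blocks are $j$-images of ground-model dense sets, so that $j[G]$ (rather than merely $G$) meets them. I will first try to organize this as an induction on $n$ along $j=j_{n-1,n}\circ j_{n-1}$, lifting one ultrapower at a time. At each step the single-ultrapower case, handled as in \cite[Theorem 1.4]{Kaplan}, is applied to the image forcing $j_{n-1}(\po)$ over $M_{n-1}[H_{n-1}]$.
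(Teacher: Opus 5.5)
Your proposal is correct. In its final form, induction along $j_{U^n}=j_{n-1,n}\circ j_{n-1}$, it takes a genuinely different route from the paper's.

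\textbf{The paper's route.} The paper never leaves $V$. It defines sets $d_k(\nu_0,\ldots,\nu_{k-1})\subseteq\po$ consisting of the conditions that lie in $d(\nu_0,\ldots,\nu_{n-1})$ for $U^{n-k}$-almost all tails. It then proves by downward induction on $k$ that these sets are dense open. Each step uses the Discrete Fusion Lemma to find, for every limit point $\nu$ of $I$, some $r(\nu)\in\po_\nu$ with $r(\nu){}^\frown p'\setminus\nu\in d_k(\ldots,\nu)$. It then amalgamates these via $p^*=[\nu\mapsto r(\nu)]_U\in j_U(\po)_\kappa=\po$. Genericity of $G$ is used only once, to meet $d_0$, and this immediately gives $p\in G$ with $j_{U^n}(p)\in D$.

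\textbf{Your route.} You replace that measure-theoretic amalgamation by genericity over the intermediate iterate. Write $D=j_{n-1,n}(g)(\kappa_{n-1})$ with $g\in M_{n-1}$. Applying the one-variable fusion lemma inside $M_{n-1}$ to $j_{n-1}(\po)$ yields a fusion condition $t\in H_{n-1}$ such that $\{r\in j_{n-1}(\po)\colon r{}^\frown j_{n-1,n}(t)\setminus\kappa_{n-1}\in D\}$ is dense above $t$. This set lies in $M_n\subseteq M_{n-1}$, so $H_{n-1}$ meets it. Finally, the filter generated by $j_{n-1,n}[H_{n-1}]$ coincides with the one generated by $j_{U^n}[G]$.

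\textbf{Comparison.} Your scheme is exactly the one the paper later uses for finite normal iterations in Theorem \ref{Thm:LiftingNormalIteratesAfterDiscreteProcuctForcing}. It is more modular and needs only the plain fusion lemma. The paper's argument, in exchange, produces an explicit dense set $d_0\subseteq\po$ in $V$ and avoids reasoning about generics over inner models.

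\textbf{Remarks on your preliminary sketch.} First, $\bigcap\{f(s)\colon s\in[\beta+1]^n\}$ need not be dense, and can even be empty, however closed the tail $\po\setminus(\beta+1)$ is. The initial factor $\po_{\beta+1}$ is only $\min(I)$-closed, while there are $|\beta|$ many sets $f(s)$. So drop that direct multi-variable version. Second, the ``peeling'' obstacle you anticipate disappears once you note that $M_n\subseteq M_{n-1}$. The dense set obtained at $\kappa_{n-1}$ is already an element of $M_{n-1}$, so the induction hypothesis applies to it directly.
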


    \begin{proof}
        We assume throughout the proof that $I$ is an unbounded subset of $\kappa$. The proof for the case in which $I$ is bounded follows from the Levy-Solovay Theorem (see \cite{levysolovay1967}).
        
        Let $H\subseteq j_{U^n}(\po)$ be the filter generated from $j_{U^n}[G]$, that is,
        $$ H =  \{ q\in j_{U^n}(\po)  \colon \exists p\in G, \  q\leq j_{U^n}(p) \}.$$
        We argue that $H$ is $j_{U^n}(\po)$-generic over $M_{U^n}$. Clearly $H$ is a filter on $j_{U^n}(\po)$, so it suffices to prove that $H$ meets all the dense open subset of $j_{U^n}(\po)$ which belong to $M_{U^n}$.

        Fix a dense open subset $D\in M_{U^n}$ of $j_{U^n}(\po)$. For every increasing sequence $\vec{\nu} = \la \nu_0,\ldots, \nu_{n-1} \ra\in [\kappa]^n$, let $d(\vec{\nu})\subseteq \po$ be a dense open subset such that $D = j_{U^n}(\vec{\nu}\mapsto d(\vec{\nu}))(\kappa_0,\ldots, \kappa_{n-1})$. For every $1\leq k\leq n-1$ and $\la \nu_0,\ldots, \nu_{k-1} \ra\in [\kappa]^{k}$, let 
        \begin{align*}
            d_k(\nu_0,\ldots, \nu_{k-1}) = \{r\in \po  \colon & \exists A\in U^{n-k} \ \  \forall \la \nu_k,\ldots, \nu_{n-1} \ra\in A\setminus (\nu_{k-1}+1), \\
            & r\in d(\nu_0,\ldots, \nu_{k-1}, \nu_k, \ldots, \nu_{n-1} )  \}.
        \end{align*}
        For sake of completeness, we provide the definitions for $k=0$ and $k=n$. For $k=0$, let
        \begin{align*}
            d_0 = \{r\in \po  \colon & \exists A\in U^{n} \ \  \forall \la \nu_0,\ldots, \nu_{n-1} \ra\in A, \ r\in d(\nu_0,\ldots, \nu_{n-1} )  \}.
        \end{align*}
        For $k=n$ and for every $\vec{\nu} = \la \nu_0,\ldots, \nu_{n-1} \ra\in [\kappa]^n$, let $d_{n}(\vec{\nu}) := d(\vec{\nu})$.     
        
        By our assumption, each of the sets $d_n(\vec{\nu})$ is dense open. We argue that $d_0$ is a dense open subset of $\po$. This follows from the following claim.
        
    \begin{claim}\label{Claim:DownwardsInductionMultivarFusion}
        For every $0\leq k\leq n$ and $\la \nu_0,\ldots, \nu_{k-1} \ra\in [\kappa]^{k}$, the set $d_k(\nu_0,\ldots, \nu_{k-1})$ is a dense open subset of $\po$. 

    \end{claim}
    \begin{proof}
        We prove the claim by an inverse induction on $k$. Namely,  assume that the statement is true for some $1\leq k\leq n$, and prove the statement for $k-1$ (the induction basis, which is the case $k=n$, is known to hold by our definition of the sets $d_n(\vec{\nu})$). 

        Fix $\la \nu_0,\ldots, \nu_{k-2}\ra\in [\kappa]^{k-1} $, and let us argue that the set $d_{k-1}(\nu_0,\ldots, \nu_{k-2})$ is dense and open. Openness follows from the fact that the sets $d(\vec{\nu})$ are themselves open. Thus, we concentrate on density. 
        
        Fix $p\in \po$. By the induction hypothesis, for every $\nu\in (\nu_{k-2}, \kappa)$, the set
        \begin{align*}
            e_{\nu_0,\ldots, \nu_{k-2}}(\nu):= d_{k}(\nu_0,\ldots, \nu_{k-2}, \nu)
        \end{align*}
        is known to be a dense open subset of $\po$. Thus, by the Discrete Fusion Lemma (Lemma \ref{Lem:DiscreteFusion}), there exists $p'\in \po$ extending $p$ such that for every $\alpha<\kappa$ and for every $\nu \leq \sup(I\cap \alpha)$,
        $$ \{ r\in \po_\alpha \colon r{}^\frown p'\setminus \alpha\in e_{\nu_0,\ldots, \nu_{k-2}}(\nu) \} $$
        is dense open above $p'\uhr \alpha$ in $\po_\alpha$. Next, let $C = \{ \nu<\kappa \colon \sup(I\cap \nu) = \nu \} $, and note that $C$ is a club in $\kappa$ (by the unboundedness of $I$) which is disjoint from $I$ (by the discreteness of $I$). Fixing $\nu\in C\setminus (\nu_{k-2}+1)$, let $\alpha(\nu):= \min(I\setminus (\nu+1))$. Note that $\alpha(\nu)<\kappa$ since $I$ is unbounded in $\kappa$. Also, since $\nu\notin I$, $\po_\nu = \po_{\alpha(\nu)}$. It follows that for each such $\nu$ there exists a condition $r(\nu)\in \po_{\alpha(\nu)} = \po_{\nu}$ extending $p'\uhr \nu$,  for which 
        $$ r(\nu){}^\frown p'\setminus \nu\in d_{k}{(\nu_0,\ldots, \nu_{k-2},\nu)}. $$

        Since $C$ is a club in $\kappa$, $C\setminus (\kappa_{n-2}+1)\in U$. Hence, we can consider the condition $p^* = [\nu\mapsto r(\nu)]_U$. By the normality of $U$, $p^*\in (j_U(\po))_\kappa = \po$. Also, $p^*\geq p'$ since for each $\nu$, $r(\nu)\geq p'\uhr \nu$. Denote $A:=\{ \nu<\kappa \colon p^*\uhr \nu = r(\nu) \}\in U$. In particular, for every $\nu\in A$,
        $$ (p^*\uhr \nu)^{\frown} p'\setminus \nu \in d_{k}(\nu_0,\ldots, \nu_{k-2}, \nu).$$
        Since the sets $d_{k}(\nu_0,\ldots, \nu_{k-2}, \nu)$ are open, we deduce that for every $\nu\in A$, $p^*\in d_{k}(\nu_0,\ldots, \nu_{k-2}, \nu)$. In particular, there exists a set $A(\nu)\in U^{n-k}$ such that for every $\la \nu_{k}, \ldots, \nu_{n-1} \ra\in A(\nu)$,
        $$ p^*\in d(\nu_0,\ldots, \nu_{k-2}, \nu, \nu_k, \ldots, \nu_{n-1} ). $$
        Finally, let
        $$ A^* = \{  \la \nu, \nu_{k}, \ldots, \nu_{n-1} \ra\in [\kappa]^{n-k+1} \colon  \nu\in A \mbox{ and } \la \nu_{k},\ldots, \nu_{n-1} \ra\in A(\nu)\}. $$
        Since for every $\nu\in A$, $A(\nu)\in U^{n-k}$, we deduce that $A^*\in U^{n-k+1}$. This concludes the proof of Claim \ref{Claim:DownwardsInductionMultivarFusion}.
    \end{proof}
    It thus follows that $d_0\subseteq \po$ is dense open. In particular, for some $p\in G$, there exists $A\in U^{n}$ such that for every $\vec{\nu} = \la \nu_0,\ldots,\nu_{n-1}  \ra\in A$, $p\in d(\vec{\nu})$. In particular, $j_{U^n}(p)\in D$, as desired.

    This concludes the proof that $H$ is $j_{U^n}(\po)$-generic over $M_{U^n}$. 
    
    Define a measure $W\in V[G]$ on $[\kappa]^n$, consisting of all the interpretations of $\po$-names $\dot{X}$, for which there exists $p\in G$ such that $p\Vdash \dot{X}\subseteq [\check{\kappa}]^n$, and 
    $$j_{U^n}(p)\Vdash \la \check{\kappa}_0,\ldots, \check{\kappa}_{n-1} \ra \in j_{U^n}(\dot{X}).$$
    It remains to argue that $W$ is a $\kappa$-complete ultrafilter, and $W$ is generated in $V[G]$ from $U^n$.

    It is routine to prove that $W$ extends $U^n$ and is closed upwards and under finite intersections. To prove that $W$ is a $\kappa$-complete ultrafilter on $[\kappa]^n$ in $V[G]$, fix an ordinal $\eta<\kappa$ and a partition $\la X_i \colon i<\eta \ra$ of $[\kappa]^n$. We argue that for some $i<\eta$, $X_i\in W$. Fix a condition $p\in G$ which forces that $\la \dot{X}_i \colon i<\check{\eta} \ra$ is a partition of $[\kappa]^n$. It follows that $j_{U^n}(p)$ forces that $\la j_{U^n}(\dot{X}_i) \colon i<\check{\eta} \ra$ is a partition of $[\kappa_n]^n$. Define
    $$D = \{ q\in j_{U^n}(\po) \colon \exists i<\eta, \  \ q\Vdash  \la \check{\kappa}_0,\ldots, \check{\kappa}_{n-1} \ra \in j_{U^n}(\dot{X}_i) \}.$$
    It's straightforward that $D$ is a dense open subset of $j_{U^n}(\po)$ above the condition $j_{U^n}(p)\in H$. In particular, $D$ intersects $H$, and, since $D$ is open and $H$ is generated from $j_{U^n}[G]$, we can assume that for some $p^*\geq p$ in $G$ and $i<\eta$,
    $$ j_{U^n}(p^*)\Vdash \la \check{\kappa}_0,\ldots, \check{\kappa}_{n-1} \ra \in j_{U^n}(\dot{X}_i). $$
    In particular, $X_i\in W$, as desired.

    Finally, we argue that $W$ is generated from $U^n$ in $V[G]$. Fix $X\in W$. Let $\dot{X}$ be a $\po$-name for $X$. Since $X\in W$, there exists $p\in G$ such that $j_{U^n}(p)\Vdash  \la \check{\kappa}_0,\ldots, \check{\kappa}_{n-1} \ra \in j_{U^n}(\dot{X})$. In particular,
    $$ A = \{ \vec{\nu} = \la \nu_0,\ldots, \nu_{n-1} \ra\in [\kappa]^n \colon p\Vdash \check{\vec{\nu}}\in \dot{X} \}\in U^n $$
    and $A\subseteq X$.
    \end{proof}

    The following Corollary extends the analysis done in the proof of Lemma \ref{Lem:LiftsOfPowersOfU}, by showing that the generic extension $V\subseteq V[G]$ preserves normal measures and their powers.
    
    \begin{corollary}\label{Cor:PreservingMeasureNormnalityAndPowers}
        Assume the hypotheses of Lemma \ref{Lem:LiftsOfPowersOfU}. Let $U\in V$ is a normal measure on $\kappa$, and let $U^*\in V[G]$ be the measure generated from $U$ in $V[G]$. Then $U^*$ is normal. Furthermore, for every $n<\omega$, $(U^*)^n$ is the measure generated in $V[G]$ from $U^n$. 
    \end{corollary}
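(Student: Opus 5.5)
Both claims will be read off the lifting established in the proof of Lemma \ref{Lem:LiftsOfPowersOfU}. For each $n\geq 1$, the filter $H_n$ generated by $j_{U^n}[G]$ is $j_{U^n}(\po)$-generic over $M_{U^n}$. Hence $j_{U^n}$ lifts to $j^*_n\colon V[G]\to M_{U^n}[H_n]$, and we have
$$W_n=\{X\subseteq[\kappa]^n \colon \la\kappa_0,\ldots,\kappa_{n-1}\ra\in j^*_n(X)\},$$
where $W_n$ is the measure generated from $U^n$. In particular $U^*=W_1$ is the measure derived from $j^*_1$ using the seed $\kappa$.

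\textbf{Normality.} Derived measures of the form $\{X\colon \kappa\in j(X)\}$, with $\mathrm{crit}(j)=\kappa$, are always normal. So it suffices to check that $\mathrm{crit}(j^*_1)=\kappa$, which holds because $j^*_1$ extends $j_U$. Alternatively, one can argue directly. Given $f\colon\kappa\to\kappa$ in $V[G]$ that is regressive on a set in $U^*$, shrink that set to some $A\in U$ (possible since $U^*$ is generated by $U$). Then $j^*_1(f)(\kappa)<\kappa$, so $j^*_1(f)(\kappa)=\xi$ for some $\xi<\kappa$. Hence $\{\nu \colon f(\nu)=\xi\}\in U^*$.

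\textbf{Powers.} I would argue by induction on $n$, showing $(U^*)^n=W_n$. Since both sides are ultrafilters in $V[G]$, it suffices to prove one inclusion, $U^n\subseteq (U^*)^n$, because then $W_n\subseteq (U^*)^n$. This requires showing that every $A\in U^n$ lies in $(U^*)^n$.

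I would use the recursive description $A\in U^{n}$ iff $\{\nu \colon A_\nu\in U^{n-1}\}\in U$, where $A_\nu=\{\vec\mu \colon \la\nu\ra^\frown\vec\mu\in A, \ \nu<\mu_0\}$. The same recursive description defines $(U^*)^n$ from $U^*$. By the induction hypothesis $(U^*)^{n-1}\supseteq U^{n-1}$, so the $U$-large set of $\nu$ with $A_\nu\in U^{n-1}$ consists of $\nu$ with $A_\nu\in (U^*)^{n-1}$. That set is in $U\subseteq U^*$, so $A\in (U^*)^n$.

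The only point requiring care is that $U^n$ and $(U^*)^n$ are defined by the same recursion, the first computed in $V$ and the second in $V[G]$, and that $U^n$-sets remain sets of the appropriate form in $V[G]$. Since $V\subseteq V[G]$ and $A_\nu$ is computed identically in both models, this is immediate. I therefore expect no real obstacle.
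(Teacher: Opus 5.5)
Your proposal is correct and follows essentially the paper's route. For the powers, the paper likewise just notes $U^n\subseteq (U^*)^n$ (your induction on $n$ spells this out) and concludes because both are ultrafilters. Your normality argument via the lifted embedding $j^*_1$ is the alternative the paper itself records in the remark after the proof; its main proof does the same computation of $j_U(\dot f)(\kappa)$, phrased through names and a condition $j_U(q)\in H$ that decides it.
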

    \begin{proof}
        We will continue to use the same notations from the proof of Lemma \ref{Lem:LiftsOfPowersOfU}. We first argue that the measure $U^*$ generated from $U$ in $V[G]$ is normal. Let $f\in V[G]$ be a regressive function on $\kappa$. Fix a $\po$-name $\dot{f}$ for it, and a condition $p\in G$ which forces that $\dot{f}$ is regressive. Let $H\subseteq j_{U}(\po)$ be the generic filter generated from $j_{U}[G]$. Since $j_U(p)\in H$, $H$ meets the set of conditions which decide $j_U(\dot{f})(\kappa)$. In particular, there exist an ordinal $\alpha$ and a condition of the form $j_U(q)$ for $q\in G$ such that
        $$ j_U(q)\Vdash j_U(\dot{f})(\check{\kappa}) = \check{\alpha}. $$
        Since $q\in G$, $q$ is compatible with $p$. Hence $j_U(q)$ and $j_U(p)$ are compatible, and the letter forces that $j_U(\dot{f})(\check{\kappa})<\check{\kappa}$. It this follows that $\alpha<\kappa$, and thus 
        $$ \{ \xi<\kappa \colon q\Vdash \dot{f}(\check{\xi}) = \check{\alpha} \}\in U. $$
        Since $q\in G$ and $U^*\supseteq U$, we deduce that, in $V[G]$,  
        $$ \{\xi<\kappa \colon f(\xi) = \alpha \}\in U^*, $$
        as desired. 
        
        Finally, fix $n<\omega$. Note that $U^n\subseteq (U^*)^n$ since $U\subseteq U^*$. It thus follows that $(U^*)^n$ is the measure generated from $U^n$ in $V[G]$. 
    \end{proof}

    \begin{remark}
        In the proofs of Lemma \ref{Lem:LiftsOfPowersOfU} and Corollary \ref{Cor:PreservingMeasureNormnalityAndPowers} we chose to avoid describing the ultrapower embedding  associated with $U^*$ in $V[G]$. To complete the picture, let us briefly describe it.
        
        As before, denote by $H\in V[G]$ the filter on $j_U(\po)$ generated from $j_U[G]$. Since $H$ is generic over $M_U$ and $j_U[G]\subseteq H$, we can apply Silver's lifting criterion to lift $j_U\colon V\to M_U\simeq \text{Ult}(V, U)$ to an elementary embedding  $j^*\colon V[G]\to M_U[H]$. Note that 
        $$U^*=\{ X\subseteq \kappa \colon \kappa\in j^*(X)  \},$$ 
        since both sides are ultrafilters in $V[G]$ that extend $U$, and $U$ already generates a measure in $V[G]$.\footnote{This, in particular, shows that $U^*$ is normal, and can replace the argument from the proof of Corollary \ref{Cor:PreservingMeasureNormnalityAndPowers}.} 

        Let us verify that $j^*=j^{V[G]}_{U^*}$, where $j^{V[G]}_{U^*}$ is the ultrapower embedding associated with $U^*$ over $V[G]$. Indeed, every element of $M_U[H]$ is of the form $j^*(f)(\kappa)$ for some $f\colon\kappa\to V[G]$ in $V[G]$.\footnote{This is a routine argument: Let $x=\tau_H\in M_U[H]$, where $\tau\in M_U$ is a $j_U(\po)$-name. Write $\tau=j_U(F)(\kappa)$ for some $F\colon\kappa\to V$. By \L o\'s's theorem, after modifying $F$ on a $U$-null set, we may assume that each $F(\xi)$ is a $\po$-name. Define $f\colon\kappa\to V[G]$ by
        $f(\xi)=F(\xi)_G$. Then
         $j^*(f)(\kappa)=\tau_H=x$.} It thus follows that the embedding $k\colon M^{V[G]}_{U^*}\simeq \text{Ult}(V[G],U^*)\to M_U[H]$, defined by $k([f]_{U^*})=j^*(f)(\kappa)$, is surjective, and hence an isomorphism.

        Finally, fix $n<\omega$, and let us address the measure in $V[G]$ generated from $U^n$. Denote it by $(U^n)^*$, and recall that we already established that $(U^n)^* = (U^*)^n$. In particular, we deduce that $j_{(U^n)^*}\colon V[G]\to \text{Ult}(V[G], (U^n)^*)$ coincides with $j_{(U^*)^n}$, which is the $n$-th iterated ultrapower of the normal measure $U^*$ over $V[G]$. 
    \end{remark}
    
    Let us finally proceed to the proof of the main result in this section.
    
    \begin{proof}[Proof of Theorem \ref{Thm:ForcingOverLU}]
        Assume $V = L[U]$, and $U$ is the unique normal measure in $V$. We argue that $V[G]$ is a `Kunen-like' model, in the sense that it has a unique measurable cardinal $\kappa$, a unique normal measure $U^*\in V[G]$ on $\kappa$, and every $\sigma$-complete ultrafilter is Rudin-Keisler equivalent to $(U^*)^n$ for some $n<\omega$. Such a model can be easily seen to be a model of {\rm UA}.

        We will make use of the following property of $L[U]$: For every $W\in V[G]$ which is a $\sigma$-complete ultrafilter, $W\cap L[U]\in L[U]$. This implies that no cardinal other than $\kappa$ can be measurable in $V[G]$.

        We already proved in Lemma \ref{Lem:LiftsOfPowersOfU} and Corollary \ref{Cor:PreservingMeasureNormnalityAndPowers} that $\kappa$ remains measurable in $V[G]$. Thus $\kappa$ is the unique measurable cardinal. Also, $U$ lifts to a normal measure $U^*$ in $V[G]$, and for every $n<\omega$, $U^n$ generates the measure $(U^*)^n$ in $V[G]$. It remains to show that, up to Rudin-Keisler equivalence, those are the only measures.

        Assume that $W\in V[G]$ is a $\sigma$-complete ultrafilter on an ordinal $\eta$. By the above property of $L[U]$, $W\cap L[U]\in L[U]$. In particular, for some $n<\omega$, $W\cap L[U] \equiv_{RK} U^n$. Let $f\colon \eta\to [\kappa]^n$ be an injection witnessing the Rudin-Keisler equivalence, in the sense that for every $X\subseteq [\kappa]^n$ in $L[U]$,
        $$X\in (U^*)^n \iff f^{-1}[X]\in W\cap L[U].$$ 
        It follows that $f$ witnesses in $V[G]$ that $W\equiv_{RK} (U^*)^n$.

    \end{proof}

\begin{corollary}\label{Cor:UA+Meas+VneqHODX}
    The {Ultrapower Axiom} is consistent with the existence of a measurable cardinal $\kappa$ such that, for every $X\in V_\kappa$, $V\neq {\rm{HOD}}_{X}$.
\end{corollary}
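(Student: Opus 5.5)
Starting from $V=L[U]$ with $\kappa$ its measurable cardinal, we will force with a discrete product forcing $\po=\prod_{\alpha\in I}\qo_\alpha$ in which every iterand is a Cohen-type forcing. Concretely, we fix a discrete unbounded set $I\subseteq\kappa$ of inaccessible cardinals; for instance, we let $I$ consist of the inaccessibles that are limits of inaccessibles but not limits of such limit points, or more simply we define $I$ recursively as the least inaccessible above the supremum of the previous elements, together with $\omega$-spaced thinning. We let $\qo_\alpha=\mathrm{Add}(\alpha,1)$, which is $\alpha$-closed and has rank well below the next element of $I$. By Theorem \ref{Thm:ForcingOverLU}, $V[G]$ satisfies UA and $\kappa$ is measurable, so it remains to show that $V[G]\neq \mathrm{HOD}_X^{V[G]}$ for every $X\in V_\kappa^{V[G]}$.

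Fix $X\in V_\kappa^{V[G]}$. Since $\kappa$ is inaccessible and $\po$ factors at any $\gamma<\kappa$, there is $\gamma<\kappa$ such that $X\in V[G_\gamma]$. Pick $\alpha\in I$ above $\gamma$. The main step is to show that the Cohen subset $g_\alpha\subseteq\alpha$ added at coordinate $\alpha$ is not in $\mathrm{HOD}_X^{V[G]}$. We use the standard homogeneity argument. The forcing $\po\setminus\gamma$ is weakly homogeneous in $V[G_\gamma]$, since it is a product of Cohen forcings and each Cohen forcing is homogeneous via coordinatewise automorphisms. Moreover, $V[G]=V[G_\gamma][G\uhr(\po\setminus\gamma)]$, with $V=L[U]$ definable in $V[G]$ from the parameter $U\cap V$. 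Here we need to handle the parameter: $U$ is definable in $V[G]$ as $U^*\cap V$, where $U^*$ is the unique normal measure of $V[G]$ established in the proof of Theorem \ref{Thm:ForcingOverLU}; alternatively, $V$ is the unique $L[\mathcal U]$ model with $\mathcal U$ a measure on $\kappa$, i.e.\ $L[U^*]$. This gives $\mathrm{HOD}_X^{V[G]}\subseteq \mathrm{HOD}^{V[G]}_{\{X,U\}}\subseteq V[G_\gamma]$ by the usual argument: any set of ordinals definable in $V[G]$ from $X$, ordinals, and the ground-model-definable parameter is decided by the weakly homogeneous tail forcing with all parameters in $V[G_\gamma]$, and hence lies in $V[G_\gamma]$. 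Since $g_\alpha\notin V[G_\gamma]$ by genericity of the product, we conclude $g_\alpha\notin\mathrm{HOD}_X$, and therefore $V[G]\neq\mathrm{HOD}_X$.

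The step we expect to be the main obstacle is the homogeneity argument in the class-sized setting. We must check that $V$, or a class playing its role, is definable in $V[G]$ from parameters lying in $V[G_\gamma]$, so that automorphisms of the tail forcing preserve truth of formulas mentioning these parameters. We would handle this by noting that $V[G]$ is a generic extension of $V[G_\gamma]$ by the set forcing $\po\setminus\gamma$, and by applying the forcing theorem over $V[G_\gamma]$. Here the ground $V[G_\gamma]$ is itself definable from $V$ together with $G_\gamma$, and the class $V$ is definable in $V[G]$ as $L[U^*\cap L[U^*]]$, which is parameter-free since $U^*$ is unique. Hence any formula $\varphi(\xi,X,\vec\beta)$ defining a subset of ordinals in $V[G]$ is forced, or refuted, by the trivial condition of $\po\setminus\gamma$ over $V[G_\gamma]$, which completes the argument.
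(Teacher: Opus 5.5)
Your proposal is correct and follows essentially the same route as the paper: force over $L[U]$ with $\po=\prod_{\alpha\in I}\mathrm{Add}(\alpha,1)$, invoke Theorem \ref{Thm:ForcingOverLU}, place $X$ in some $V[G_\gamma]$, and use weak homogeneity of the tail $\po\setminus\gamma$ (via coordinatewise flipping automorphisms) with Lemma \ref{Lem:HomogeneityAndHOD} to get $\mathrm{HOD}_X^{V[G]}\subseteq V[G_\gamma]$, which misses the tail Cohen generics. Your detour through the definability of $V=L[U^*]$ in $V[G]$ is harmless but unnecessary, because the homogeneity lemma applied over the ground $V[G_\gamma]$ only needs $X$ and the ordinal parameters to lie in $V[G_\gamma]$.
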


Prior to the proof, let us recall the standard notion of weakly homogeneous forcing and its application to ${\rm HOD}$.

\begin{definition}
A forcing notion $\po$ is \textit{weakly homogeneous} if for every $p,q\in\mathbb \po$, there exists an automorphism $\pi$ of $\mathbb \po$ such that $\pi(p)$ is compatible with $q$.    
\end{definition}

\begin{lemma}[Folklore]\label{Lem:HomogeneityAndHOD}
    Suppose that $\po\in V$ is a weakly homogeneous poset, $G\subseteq\po$ is generic over $V$, and $X\in V$. Then
    $$(\mathrm{HOD}_X)^{V[G]}\subseteq V.$$
\end{lemma}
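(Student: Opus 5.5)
The plan is the standard symmetry argument. Fix $X\in V$ and a set $A\in(\mathrm{HOD}_X)^{V[G]}$; we may take $A$ to be a set of ordinals, since $\mathrm{HOD}_X$ is determined by its sets of ordinals. We will show $A\in V$; the inclusion $(\mathrm{HOD}_X)^{V[G]}\subseteq V$ then follows by the usual $\in$-induction/coding argument. As $A\in\mathrm{HOD}_X^{V[G]}$, there is a formula $\varphi$, ordinals $\vec\xi$, and the parameter $X$ such that in $V[G]$,
$$A=\{\eta \colon \varphi(\eta,\vec\xi,X)\}.$$
Here we use that $\mathrm{HOD}_X$ membership is witnessed via definability over some $V_\theta^{V[G]}$ by reflection, so the definition can be taken to be first order with the relevant parameters. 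All parameters are ordinals or the ground-model set $X$, and each of them has a canonical check-name.

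Our candidate is
$$B=\{\eta \colon \Vdash_{\po}\varphi(\check\eta,\check{\vec\xi},\check X)\},$$
which lies in $V$ by the definability of the forcing relation. Clearly $B\subseteq A$. For the converse, suppose $\eta\in A$. Then some $p\in G$ forces $\varphi(\check\eta,\check{\vec\xi},\check X)$. We claim that already $\mathbb 1\Vdash\varphi(\check\eta,\check{\vec\xi},\check X)$.

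The claim is where weak homogeneity is used, and it is the only nontrivial step. Suppose instead that some $q$ forces $\neg\varphi(\check\eta,\check{\vec\xi},\check X)$. Choose an automorphism $\pi$ of $\po$ with $\pi(p)$ compatible with $q$. Automorphisms preserve the forcing relation, $p\Vdash\psi(\tau)$ iff $\pi(p)\Vdash\psi(\pi\tau)$, and they fix check-names, since $\pi(\check x)=\check x$ by $\in$-induction on names. Hence $\pi(p)\Vdash\varphi(\check\eta,\check{\vec\xi},\check X)$. A common extension of $\pi(p)$ and $q$ would then force both $\varphi$ and $\neg\varphi$, which is a contradiction. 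So $A=B\in V$.

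To pass from sets of ordinals to arbitrary elements, argue by $\in$-induction. Every $y\in\mathrm{HOD}_X^{V[G]}$ has transitive closure coded, inside $\mathrm{HOD}_X^{V[G]}$, by a set of ordinals. By the argument above that code is in $V$, so its decoding gives $y\in V$.

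I expect no real obstacle. The only care needed is to ensure that the defining formula uses only parameters with check-names, namely ordinals and $X\in V$. This is exactly why the hypothesis $X\in V$ is required.
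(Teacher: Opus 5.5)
Your argument is correct. The paper states this lemma as folklore and gives no proof of its own, and what you wrote is the standard symmetry argument being cited: the truth lemma, a condition $q$ forcing $\neg\varphi$ refuted via an automorphism $\pi$ with $\pi(p)$ compatible with $q$, and the invariance of check-names. Two cosmetic remarks: bound $B$ by an ordinal $\lambda\supseteq A$ so Separation in $V$ visibly applies. Also, the final step needs neither coding nor AC in $(\mathrm{HOD}_X)^{V[G]}$, since the same argument shows directly that any $\mathrm{OD}_X^{V[G]}$ set $y\subseteq V_\alpha^V$ equals $\{z\in V_\alpha^V\colon \Vdash_{\po}\varphi(\check z,\check{\vec\xi},\check X)\}$, so a plain $\in$-induction finishes.
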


\begin{proof}[Proof of Corollary \ref{Cor:UA+Meas+VneqHODX}]
    Assume $V = L[U]$. Fix a discrete unbounded subset $I \subseteq \kappa$ (for example, $I$ is the set of inaccessible cardinals which are not limits of inaccessible cardinals). Define a discrete product forcing $\po = \prod_{\alpha\in I} \qo_\alpha$ where, for every $\alpha\in I$, $\qo_\alpha = \text{Add}(\alpha,1)$. Let $G\subseteq \po$ be generic over $V$. By Theorem \ref{Thm:ForcingOverLU}, $\kappa$ remains measurable in $V[G]$, and $V[G]$ satisfies the Ultrapower Axiom. 
    Thus, it  remains to show that for every $X\in V[G]$ with $\text{rank}(X)<\kappa$, $V[G]\neq ({\rm{HOD}}_{X})^{V[G]}$. Fix such $X$. Since $\text{rank}(X)<\kappa$, there exists $\gamma\in I$ such that $X\in V[G_{\gamma}]$, where $G_\gamma = G\uhr \po_\gamma$ is the initial segment of the generic up to $\gamma$. We will show that $\po\setminus \gamma$ is a weakly homogeneous forcing. This suffices as it implies, by Lemma \ref{Lem:HomogeneityAndHOD}, that $(\rm{HOD}_X)^{V[G]}\subseteq V[G_\gamma]$. Since $I$ is unbounded in $\kappa$, the forcing $\po\setminus \gamma$ adds a new function $g\colon I\setminus \gamma\to V_\kappa$ which is generic over $V[G_{\gamma}]$ (more specifically, $g(\beta)$ for $\beta\in I\setminus \gamma$ is the Cohen generic added to $\qo_\beta$). In particular, $g
    \notin (\rm{HOD}_X)^{V[G]}$. Hence $V[G]\neq (\rm{HOD}_X)^{V[G]}$.

    To prove weak homogeneity, suppose that $p,q\in \prod_{\alpha\in I\setminus \gamma}\qo_\alpha$.  For every $\alpha\in I$, let $\Delta_{\alpha} = \{ \xi\in \dom(p(\alpha))\cap \dom(q(\alpha)) \colon p(\alpha)(\xi)\neq q(\alpha)(\xi)\}$. Define an automorphism $\sigma_\alpha$ of $\text{Add}(\alpha,1)$ by flipping the coordinates on $\Delta_{\alpha}$, namely, for every $s\in \text{Add}(\alpha,1)$,
    $$ \sigma_\alpha(s)(\xi) =  
    \begin{cases}
        1-s(\xi), & \text{if } \xi\in\Delta_\alpha\cap \dom(s),\\
        s(\xi), & \text{if } \text{otherwise.}
        \end{cases}$$
    Then $\sigma\colon \po\setminus \gamma \to \po\setminus \gamma$, defined by setting, for every $r\in \po\setminus \gamma$,
    $$ \sigma(r)(\alpha) = \sigma_\alpha(r(\alpha)) $$
    is an automorphism such that $\sigma(p), q$ are compatible. \end{proof}

\section{A strong cardinal}\label{Sec:Strong}

Throughout this section, we work over the minimal canonical inner model $L[\vec{E}]$ with a strong cardinal $\kappa$. We first recall the definition of a coherent nonoverlapping sequence of extenders and some basic facts about canonical inner models constructed from such sequences. Our presentation follows Mitchell's treatment in \cite{MitchellInnerModel} and is also influenced by the presentation in \cite{aptercummings2023normalmeasuresonlargecards}. Readers familiar with this material may skip ahead to Definition \ref{Def:SuitablySpaced}.

\begin{definition}(Mitchell)
A \textit{{coherent nonoverlapping sequence of extenders}} is a function
$\vec E$ with $\dom(\vec E)\subseteq {\rm On}\times{\rm On}$
satisfying the following conditions.

\begin{enumerate}
    \item There is a function
    $o^{\vec E}\colon {\rm On} \to ({\rm On}\cup \{ \infty \})$ such that
    $$    \dom(\vec E)  =   \{(\alpha,\beta)\colon \beta<o^{\vec E}(\alpha)\}. $$

    \item If $(\alpha,\beta)\in\dom(\vec E)$, then
    $\vec E(\alpha,\beta)$ is a total $(\alpha,\alpha+1+\beta)$-extender. Thus
    $$\text{crit}(E(\alpha,\beta))=\alpha$$
    and 
    $$\text{lh}(E(\alpha,\beta))\leq\alpha+1+\beta.$$
    \item If $(\alpha,\beta)\in \dom(\vec E)$ and
    $E=\vec E(\alpha,\beta)$, then
    $$
    j_E(o^{\vec E})(\alpha)=\beta
    \qquad\text{and}\qquad
    j_E(o^{\vec E})\restriction\alpha
    =
    o^{\vec E}\restriction\alpha,
    $$
    and whenever
    $(\alpha',\beta')<_{\text{lex}}(\alpha,\beta)$,\footnote{$<_{\text{lex}}$ denotes the lexicographic ordering on ${\rm On}\times{\rm On}$, so that
    $(\alpha_0,\beta_0)<_{\text{lex}}(\alpha_1,\beta_1)$ if and only if $\alpha_0<\alpha_1$ or $\bigl(\alpha_0=\alpha_1\text{ and }\beta_0<\beta_1\bigr)$.}
    $$\vec E(\alpha',\beta')  =  j_E(\vec E)(\alpha',\beta').$$

    \item If $\alpha_0<\alpha_1$ and
    $o^{\vec E}(\alpha_1)>0$, then
    $$    o^{\vec E}(\alpha_0)<\alpha_1.$$

\end{enumerate}

Conditions (1)--(3) express the coherence of the extender sequence,
while condition (4) is the nonoverlapping condition.
\end{definition}

Since $L[\vec{E}]$ is the minimal inner model having a strong cardinal $\kappa$, $o^{\vec E}(\kappa)=\infty$ and $o^{\vec E}(\alpha)<\kappa$ for every
$\alpha<\kappa$. Moreover, $\kappa$ is the largest ordinal $\alpha$ such that $o^{\vec E}(\alpha)>0$.

Throughout the section, we will assume that $E(\alpha,\beta)\in L[\vec{E}]$ for every $(\alpha,\beta)\in \dom(\vec{E})$.\footnote{This can be assumed by identifying each extender $E(\alpha,\beta) = \la E(\alpha,\beta)(a) \colon a\in [\alpha+1+\beta]^{<\omega} \ra$ with its trace to $L[\vec{E}]$,  $E'(\alpha,\beta) = \la E(\alpha,\beta)(a)\cap L[\vec{E}] \colon a\in [\alpha+1+\beta]^{<\omega} \ra$.}

\begin{definition}\label{Def:NormalIterates}
An embedding $j\colon L[\vec E]\to M$ is a \textit{normal linear
iteration by extenders on the $L[\vec E]$-sequence} if there are an
ordinal $\theta$, models
$$\langle M_\xi\colon \xi\leq\theta\rangle,$$
elementary embeddings
$$\langle j_{\xi,\eta}\colon \xi\leq\eta\leq\theta\rangle,$$
and extenders
$$\langle F_\xi\colon \xi<\theta\rangle$$
satisfying the following conditions.

\begin{enumerate}
    \item $M_0=L[\vec E]$.

    \item For every $\xi<\theta$, $F_\xi$ belongs to the extender
    sequence of $M_\xi$, and
    \[
    M_{\xi+1}=\Ult(M_\xi,F_\xi),
    \]
    where
    \[
    j_{\xi,\xi+1}\colon M_\xi\to M_{\xi+1}
    \]
    is the corresponding ultrapower embedding.

    \item The embeddings form a commuting system; that is,
    $j_{\xi,\xi}=\text{id}_{M_\xi}$ and
    \[
    j_{\xi,\zeta}
    =
    j_{\eta,\zeta}\circ j_{\xi,\eta}
    \]
    whenever $\xi\leq\eta\leq\zeta\leq\theta$.

    \item If $\lambda\leq\theta$ is a limit ordinal, then $M_\lambda$,
    together with the maps $j_{\xi,\lambda}$ for $\xi<\lambda$, is the
    direct limit of
    \[
    \langle M_\xi,j_{\xi,\eta}\colon \xi\leq\eta<\lambda\rangle.
    \]

    \item The critical points of the extenders used in the iteration are
    strictly increasing; that is,
    \[
    \text{crit}(F_\xi)<\text{crit}(F_\eta)
    \]
    whenever $\xi<\eta<\theta$.

    \item $M=M_\theta$ and $j=j_{0,\theta}$.
\end{enumerate}
\end{definition}

Notice that although $F_0$ belongs to the original extender sequence $\vec E$, at later stages the extender $F_\xi$ belongs to the extender sequence of the corresponding iterate $M_\xi$. Thus, when we say that the iteration uses extenders on the $L[\vec E]$-sequence, we include the images of extenders from $\vec E$ along the iteration.

\begin{remark}
In Definition \ref{Def:NormalIterates}, normality only requires that the
critical points of the extenders used in the iteration are strictly
increasing. Since the extender sequence of each iterate $M_\xi$ is
nonoverlapping, this implies the stronger property that
$$\text{lh}(F_\xi)<\text{crit}(F_\eta)$$
whenever $\xi<\eta<\theta$. Indeed, if
$F_\xi=\vec E^{M_\xi}(\alpha_\xi,\beta_\xi)$, then coherence gives
$$o^{\vec E^{M_{\xi+1}}}(\alpha_\xi)=\beta_\xi,$$
while non-overlap and
$\alpha_\xi<\text{crit}(F_{\xi+1})$ imply
$$\beta_\xi<\text{crit}(F_{\xi+1}).$$
Since $\text{crit}(F_{\xi+1})$ is a cardinal in $M_{\xi+1}$, it follows that
$$\alpha_\xi+1+\beta_\xi<\text{crit}(F_{\xi+1}).$$
Thus
$$\text{lh}(F_\xi)<\text{crit}(F_{\xi+1}),$$
and hence
$$\text{lh}(F_\xi)<\text{crit}(F_\eta)$$
for every $\xi<\eta<\theta$.

In particular, every iteration map following the ultrapower by $F_\xi$ fixes the generators of $F_\xi$.
\end{remark}

\begin{definition}[Mitchell]\label{Def:Supports}
    Assume that $j\colon L[\vec{E}]\to M$ is a normal linear iteration by extenders on the $L[\vec{E}]$-sequence and $x\in M$. A \textit{support} for $x$ is a system consisting of $n<\omega$, a function $h\colon ([\kappa]^{<\omega})^n\to L[\vec E]$ in $L[\vec E]$, and sequences 
    $$\vec{f} = \la f_0,\ldots,f_{n-1} \ra$$
    of functions in $L[\vec E]$,
    $$\vec{N} = \la N_0,\ldots,N_n \ra$$
    of iterates of $L[\vec{E}]$,
    $$\vec{i} = \la i_{\ell,m}\colon \ell\leq m\leq n \ra$$
    of elementary embeddings,
    $$\vec{F} = \la F_0,\ldots,F_{n-1} \ra$$
    of extenders, and
    $$\vec{a} = \la \vec{a}_0,\ldots,\vec{a}_{n-1} \ra$$
    of tuples, such that:
    \begin{enumerate}
        \item $N_0 = L[\vec{E}]$.
        
        \item For every $\ell<n$, 
        $$i_{0,\ell}(f_{\ell})(\vec{a}_0,\ldots,\vec{a}_{\ell-1})$$
        is a pair $\la \alpha_{\ell},\beta_{\ell}\ra$ such that
        $$F_{\ell} = \vec{E}^{N_{\ell}}(\alpha_{\ell},\beta_{\ell}).$$
        
        \item $\vec{a}_\ell$ is a finite tuple of generators of $F_\ell$. In particular,
        $$\vec{a}_{\ell}\in [(\alpha_{\ell}+1+\beta_{\ell})\setminus\alpha_{\ell}]^{<\omega}.$$
        
        \item For every $\ell<n$, $N_{\ell+1} = \Ult(N_\ell,F_\ell)$, and
        $$i_{\ell,\ell+1}\colon N_\ell\to N_{\ell+1}$$
        is the corresponding extender ultrapower embedding.
        
        \item The embeddings 
        $$\la i_{\ell,m}\colon \ell\leq m\leq n\ra$$
        form a commuting system.
        
        \item For every $\ell<m<n$,
        $$\alpha_{\ell}+1+\beta_{\ell}<\text{crit}(F_m).$$
        
        \item There exists a normal linear iteration $k\colon N_n\to M$ by extenders on the $\vec{E}^{N_n}$-sequence,  such that
        $$j=k\circ i_{0,n}$$
        and
        $$x=k\left(i_{0,n}(h)(\vec{a}_0,\ldots,\vec{a}_{n-1})\right).$$
    \end{enumerate}
\end{definition}

\begin{lemma}[The Support Lemma]\label{Lem:SupportLemma}
    Assume that $\vec{E}$ is a coherent nonoverlapping sequence of extenders. Let $j\colon L[\vec{E}]\to M$ be a normal linear iteration by extenders on the $L[\vec{E}]$-sequence. Then every $x\in M$ has a support.
\end{lemma}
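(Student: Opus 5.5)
The natural approach is induction on the length $\theta$ of the normal iteration $j\colon L[\vec E]\to M$, with the sequences $\la M_\xi\colon\xi\le\theta\ra$, $\la j_{\xi,\eta}\rangle$, $\la F_\xi\colon\xi<\theta\ra$ as in Definition \ref{Def:NormalIterates}. We prove, for every $\eta\le\theta$, that every $x\in M_\eta$ has a support with respect to $j_{0,\eta}$. There is one added requirement that makes the induction go through: the extenders $F_\ell$ in the support are (images of) some of the $F_\xi$, $\xi<\eta$. More precisely, the support should pick finitely many stages $\xi_0<\dots<\xi_{n-1}<\eta$, and the support iteration $N_0\to N_1\to\dots\to N_n$ should embed into the given iteration by a commuting family of maps $k_\ell\colon N_\ell\to M_{\xi_\ell}$ with $k_\ell(F_\ell)=F_{\xi_\ell}$. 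For $\eta=0$ this is trivial: take $n=0$, $h$ the constant function $x$, and $k=\text{id}$.

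\textbf{Successor step.} Let $x\in M_{\eta+1}=\Ult(M_\eta,F_\eta)$. Write
$$x=j_{\eta,\eta+1}(g)(\vec b)$$
with $g\in M_\eta$ and $\vec b$ a finite tuple of generators of $F_\eta$. Apply the induction hypothesis jointly to the triple $(g,\langle\alpha_\eta,\beta_\eta\rangle)\in M_\eta$, where $F_\eta=\vec E^{M_\eta}(\alpha_\eta,\beta_\eta)$. This gives a support $h',\vec f',\vec a',\dots$ of length $n$ with $k\colon N_n\to M_\eta$, and we extend it by one more step. Let $f_n$ be the function computing the pair coordinate of $h'$ and set $F_n=\vec E^{N_n}(i_{0,n}(f_n)(\vec a'))$. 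Then $k(F_n)=F_\eta$ by elementarity; here we use that $F_\eta$ is on the sequence of $M_\eta$ and that the extender sequence is definable and coherent. Now $k$ lifts to
$$k'\colon \Ult(N_n,F_n)\to\Ult(M_\eta,F_\eta)=M_{\eta+1}$$
by $k'([a,u]_{F_n})=[k(a),k(u)]_{F_\eta}$. The generators of $F_n$ lie in $[\alpha_n,\alpha_n+1+\beta_n)$. The crit of $k$ should be above these (see the obstacle below), so $k$ fixes the generators and we may take $\vec a_n=\vec b$. Set $h$ so that $h(\dots,\vec a_n)=i_{n,n+1}(g\text{-coordinate of }h')(\vec a_n)$. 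Condition (6) follows from the normality remark, which gives $\text{lh}(F_\xi)<\text{crit}(F_{\xi'})$.

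\textbf{Limit step.} Let $\lambda$ be a limit. Any $x\in M_\lambda$ equals $j_{\xi,\lambda}(y)$ for some $\xi<\lambda$ and $y\in M_\xi$. Take a support for $y$ with map $k\colon N_n\to M_\xi$, and compose with $j_{\xi,\lambda}$, which is itself a normal iteration. This gives clause (7).

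\textbf{Main obstacle.} The delicate point is keeping the residual map $k\colon N_n\to M_\eta$ a \emph{normal linear iteration} by extenders on the $\vec E^{N_n}$-sequence, with critical point above $\alpha_n+1+\beta_n$. Without this, $k$ would not fix the new generators and clause (7) would fail at the next step. I would establish this by showing that $k$ is the iteration of $N_n$ by the images of those $F_\xi$, $\xi<\eta$, that are \emph{not} among the chosen $\xi_\ell$, applied in increasing order of critical point. Nonoverlap and normality show that extenders used after stage $\xi_{\ell}$ but before $\xi_{\ell+1}$ can be commuted past $F_{\ell}$, because their critical points exceed $\text{lh}(F_{\xi_\ell})$. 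The remaining case is an extender used before $\xi_\ell$ with critical point below $\alpha_\ell$; its index and generators must then be absorbed by enlarging the support, replacing $\vec a_\ell$ by additional stages. This makes clause (7) the real content of the argument, and it is where coherence of $\vec E$ (clause 3) is needed, to identify $k(F_n)$ with the extender on $M_\eta$'s sequence at the corresponding index.
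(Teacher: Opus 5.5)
\textbf{There is a genuine gap in your successor step.} It sits at the claim that $\text{crit}(k)$ lies above the generators of $F_n$, so that you may take $\vec a_n=\vec b$. Since $k(F_n)=F_\eta$, we have $k(\alpha_n)=\alpha_\eta$. So $\text{crit}(k)>\alpha_n$ would force $\alpha_n=\alpha_\eta<\text{crit}(k)$. But every unchosen $F_\xi$ with $\xi<\eta$ has critical point below $\alpha_\eta$, and once $\eta\geq\omega$ there are infinitely many of them, so no finite enlargement of the support can absorb them.

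Here is a concrete case. Let $\mu$ be the least measurable of $L[\vec E]$ (so $o^{\vec E}(\mu)=1$), and iterate its measure $\omega+1$ times, with critical points $\mu_0<\mu_1<\cdots<\mu_\omega$. Take $x=\mu_\omega\in M_{\omega+1}$, i.e.\ $g=\mathrm{id}$ and $\vec b=\langle\mu_\omega\rangle$. Every support of $\langle\mathrm{id},\langle\mu_\omega,0\rangle\rangle$ in $M_\omega$ has $N_n=M_n$ and $k=j_{n,\omega}$, so your $F_n$ is the measure on $\mu_n$. Then $\text{crit}(k)=\alpha_n$, and $\vec b$ is not even a generator of $F_n$, which violates clause (3).

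The missing idea, which is exactly what the paper does, is to put the generators into the object supported by the induction hypothesis. Support $\langle\alpha_\eta,\beta_\eta,\vec b,g\rangle$, write $\vec b=k(\vec a')$, and set $\vec a_n:=\vec a'$. Then $k'\bigl(i_{n,n+1}(\bar g)(\vec a_n)\bigr)=j_{F_\eta}(g)(\vec b)=x$, where $k(\bar g)=g$, with no assumption on $\text{crit}(k)$.

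\textbf{That repair does not settle what you call the main obstacle, and in fact nothing can.} The requirement in clause (7) that $k$ be a normal iteration on the $\vec E^{N_n}$-sequence already fails in this example. Three facts drive this: $j_{0,\omega+1}$ moves no measurable of $L[\vec E]$ other than $\mu$; $M_\ell$ has no measurable below $\mu_\ell$; and every support extender's critical point must be moved by $j=k\circ i_{0,n}$. From these one checks inductively, using clause (6), that every support of an element of $M_{\omega+1}$ has $N_\ell=M_\ell$, $F_\ell$ the measure on $\mu_\ell$, and $k=j_{n,\omega+1}$. But $\text{ran}(j_{n,\omega+1})\subseteq\text{ran}(j_{\omega,\omega+1})$, which omits its critical point $\mu_\omega$. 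So $\mu_\omega$ has no support.

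The paper's appeal to copying has the same defect. With $k'=j_{0,\omega}$ supporting $\langle\mu_\omega,0,\langle\mu_\omega\rangle,\mathrm{id}\rangle$, the lifted map $M_1\to M_{\omega+1}$ sends $\mu$ to $\mu_\omega$, although $o^{M_1}(\mu)=0$; so it is not an iteration map.

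What is true, and all that the genericity argument of Theorem~\ref{Thm:LiftingNormalIteratesAfterDiscreteProcuctForcing} uses, is the version in which $k$ is merely elementary with $j=k\circ i_{0,n}$. For that version your plan goes through after the repair above. Your stage bookkeeping is then a genuine gain over the paper. Require each $k_{\ell+1}$ to be $j_{\xi_\ell+1,\xi_{\ell+1}}$ composed with the canonical lift of $k_\ell$. Then $k_{\ell+1}(\alpha_\ell)=\alpha_{\xi_\ell}$, and coherence gives
$k_{\ell+1}(\alpha_\ell+1+\beta_\ell)=\alpha_{\xi_\ell}+1+\beta_{\xi_\ell}<\alpha_{\xi_{\ell+1}}=k_{\ell+1}(\alpha_{\ell+1})$.
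So clause (6) is pulled back from the normality of the given iteration, and the paper's reordering of extenders is never needed.
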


\begin{proof}    
    The proof is by induction on the length $\zeta$ of the iteration $j\colon L[\vec E]\to M$. 

    Assume first that $\zeta$ is a limit ordinal. Fix $x\in M = M_\zeta$. Since $\zeta$ is limit, there exists $\zeta'<\zeta$ and $x'\in M_{\zeta'}$ such that $x= j_{\zeta', \zeta}(x')$. By induction, $x'$ has a support system $\la n, h, \vec{f} ,\vec{N}, \vec{i}, \vec{F},  \vec{a}\ra$. Let $k'\colon N_n\to M_{\zeta'}$ be such that $j_{\zeta'} = k'\circ i_{0,n}$ and $x' = k'\left( i_{0,n}(h)(\vec{a}_0, \ldots, \vec{a}_{n-1}) \right)$. Define $k\colon N_n\to M_\zeta$ by $k = j_{\zeta', \zeta} \circ k'$. Then $k$ is a normal linear iteration, $j_{\zeta} = k\circ i_{0,n}$. Furthermore,  $x = k\left( i_{0,n}(h)(\vec{a}_0, \ldots, \vec{a}_{n-1}) \right)$, and thus $\la n,h,\vec{f} ,\vec{N}, \vec{i}, \vec{F},  \vec{a}\ra $ remains a support system for $x\in M_\zeta$. 

   We thus proceed to the case where $\zeta=\zeta'+1$ is a successor ordinal. Write $x=j_F(g)(a)$, where $F$ is an extender in $\vec E^{M_{\zeta'}}$, $g\in M_{\zeta'}$ is a function, and $a\in[\text{lh}(F)]^{<\omega}$. Write $F=\vec E^{M_{\zeta'}}(\alpha,\beta)$. Since the tuple $\la\alpha,\beta,a, g\ra$ belongs to $M_{\zeta'}$, by the induction hypothesis it admits a support system $\la n,h,\vec f,\vec N,\vec i,\vec F,\vec a\ra$. Let $k'\colon N_n\to M_{\zeta'}$ be its final factor map. 
   Thus $k'$ is a normal linear iteration,
   $$j_{\zeta'}=k'\circ i_{0,n},$$
    and, writing $$\la\alpha',\beta',a', g'\ra=
    i_{0,n}(h)(\vec a_0,\ldots,\vec a_{n-1}),$$
    we have $k'( \alpha' ,\beta', a',g' )= (\alpha,\beta, a, g)$.
    Define a function $f_n$ in $V$ such that $f_n(\vec\nu_0,\ldots,\vec\nu_{n-1})$ equals the first two
    coordinates of $h(\vec\nu_0,\ldots,\vec\nu_{n-1})$. Let $F_n:=\vec E^{N_n}(\alpha',\beta')$, $a_n= a'$, $N_{n+1}:=\Ult(N_n,F_n)$, and let $i_{n,n+1}\colon N_n\to N_{n+1}$ be the corresponding ultrapower embedding. 
    
    Since $k'(F_n)=F$, define
    $$ k\colon \Ult(N_n,F_n)\to \Ult(M_{\zeta'},F)$$
    by
    $$ k\left(j_{F_n}(u)(b)\right)= j_F(k'(u))(k'(b)).$$
    It's not hard to see that $k$ is an elementary embedding and
    $$k\circ j_{F_n}=j_F\circ k'. $$
    Moreover, by copying the normal linear iteration $k'$ along $j_{F_n}$,  the map $k$ is itself a normal linear iteration by extenders on the  $\vec E^{N_{n+1}}$-sequence
    (see \cite[Lemma 4.3.1]{ZemanBook}).\footnote{The fact that the copied map coincides with the definition of the embedding $k$ as given above follows by checking that the copied map maps each tuple $b\in [\text{lh}(F_n)]^{<\omega}$ to $k'(b)$. This is done simultaneously with the inductive copying process. We refer the interested reader to \cite[Lemma 4.3.1]{ZemanBook}.}
    
    \[
    \begin{tikzcd}[column sep=huge, row sep=large]
    N_n
        \arrow[r, "k'"]
        \arrow[d, "j_{F_n}"']
    &
    M_{\zeta'}
        \arrow[d, "j_F = j_{k'(F_n)}"]
    \\
    N_{n+1}=\Ult(N_n,F_n)
        \arrow[r, "k"'{name=K}]
    &
    M_{\zeta}=\Ult(M_{\zeta'},F)
    \arrow[from=K, phantom]
    \end{tikzcd}
    \]
At this point, the finite iteration
$$N_0\longrightarrow\cdots\longrightarrow N_n
\overset{j_{F_n}}{\longrightarrow}N_{n+1}$$
need not be normal, since we don't know that
$$\alpha_{n-1}+1+\beta_{n-1}<\alpha'=\text{crit}(F_n).$$

Let us first address the favorable case in which it is normal. In this case, define $h'$ by letting $h'(\vec\nu_0,\ldots,\vec\nu_n)$ be the value at $\vec\nu_n$ of the function given by the fourth coordinate of $h(\vec\nu_0,\ldots,\vec\nu_{n-1})$, whenever this is defined, and define it arbitrarily otherwise. Then, since
$j_{F_n}$ fixes $\vec a_0,\ldots,\vec a_{n-1}$,
$$i_{0,n+1}(h')(\vec a_0,\ldots,\vec a_n)=j_{F_n}(g')(a').$$
Consequently,
\begin{align*}
k\left(
i_{0,n+1}(h')(\vec a_0,\ldots,\vec a_n)
\right)
&=
k\left(j_{F_n}(g')(a')\right)\\
&=
j_F(k'(g'))(k'(a'))\\
&=
j_F(g)(a)\\
&=x.    
\end{align*}
Thus, let us assume that the iteration 
$$N_0\longrightarrow\cdots\longrightarrow N_n
\overset{j_{F_n}}{\longrightarrow}N_{n+1}$$
is not normal. We therefore need to rearrange this finite iteration, by commuting $F_n$ with extenders $F_\ell$ whose indices are above $\alpha'$. 

Let us demonstrate this change in the case where only the last extender $F_{n-1}$ has to be switched with $F_n$ in order to achieve normality (the general case in which more extenders have to be switched is taken care of similarly). Thus, assume that $\alpha_{n-2}<\alpha'<\alpha_{n-1}$. By coherence, $F_n$ already lies on the extender sequence of $N_{n-1}$, and $j_{F_{n-1}}$ fixes $F_n$. 

We define a new support system
$$\la n+1,h^*,\vec f^*,\vec N^*,\vec i^*,\vec F^*,\vec a^*\ra.$$
Below stage $n-1$, retain all the previous support data; namely, let
$$
\vec f^*\restriction(n-1)=\vec f\restriction(n-1),\qquad
\vec F^*\restriction(n-1)=\vec F\restriction(n-1),
$$
$$
\vec a^*\restriction(n-1)=\vec a\restriction(n-1),
\qquad\text{and}\qquad
\vec N^*\restriction n=\vec N\restriction n,
$$
and let $i_{\ell,r}^*=i_{\ell,r}$ for all $\ell\leq r\leq n-1$. 

If necessary, enlarge the tuples $\vec{a}_0,\ldots, \vec{a}_{n-2}$ by adding to each tuple finitely many more ordinals, such that $\la \alpha', \beta' \ra$ are already supported; namely, there exists a function $f_{n-1}^*\in L[\vec E]$ such that
$$i_{0,n-1}(f_{n-1}^*)(\vec a_0,\ldots,\vec a_{n-2})=\la\alpha',\beta'\ra.$$
Also define
$$f^*_n(\vec\nu_0,\ldots,\vec\nu_{n-1})=f_{n-1}(\vec\nu_0,\ldots,\vec\nu_{n-2}).$$

Let $F^*_{n-1}:=F_n$, $\vec a_{n-1}^*:=a'$  , $N^*_{n} = \Ult(N^*_{n-1}, F^*_{n-1})$, and $i_{n-1,n}^*\colon N^*_{n-1}\to N^*_{n}$ be the corresponding ultrapower embedding.

Finally, let $F^*_n:=j_{F_n}(F_{n-1})$,\footnote{Note that $i_{0,n}^*(f_n^*)(\vec a_0^*,\ldots,\vec a_{n-1}^*)=
j_{F_n}(\la\alpha_{n-1},\beta_{n-1}\ra)$, so $f^*_n$ indeed represents the extender $F^*_n=j_{F_n}(F_{n-1})$ at the new stage $n$.} $\vec a_n^*:=j_{F_n}(\vec a_{n-1})$, $N^*_{n+1} = \Ult( N^*_{n}, F^*_{n} )$, and $i_{n,n+1}^*\colon N^*_{n}\to N^*_{n+1}$ the corresponding ultrapower embeddings. 

Define $h^*\in N$ by letting $h^*(\vec\nu_0,\ldots, \vec{\nu}_{n-1}, \vec\nu_n)
$ be the value at $\vec\nu_{n-1}$ of the function given by the fourth coordinate of $h(\vec\nu_0,\ldots,\vec\nu_{n-2},\vec\nu_n),$
whenever this is defined, and define it arbitrarily otherwise. 

Note that $N_{n+1}^* = N_{n+1}$, and
$$i_{n,n+1}^*\circ i_{n-1,n}^*=j_{F_n}\circ j_{F_{n-1}}.$$
It thus follows that  
$$j_\zeta=k\circ i_{0,n+1}=k\circ i_{0,n+1}^*.$$

Finally, by the definitions above,
$$i_{0,n+1}^*(h^*)(\vec a_0^*,\ldots,\vec a_n^*) = j_{F_n}(g')(a').$$
Consequently,
\begin{align*}
k\left(
i_{0,n+1}^*(h^*)
(\vec a_0^*,\ldots,\vec a_n^*)
\right)
&=
k\left(j_{F_n}(g')(a')\right)\\
&=
j_F(k'(g'))(k'(a'))\\
&=
j_F(g)(a)\\
&=
x.
\end{align*}
\end{proof}

\begin{definition}\label{Def:SuitablySpaced}
    Assume that $V = L[\vec{E}]$ where $\vec{E}$ is a coherent sequence of nonoverlapping extenders. 
    \begin{enumerate}
        \item Suppose that $I\subseteq \kappa$ is an unbounded discrete set of inaccessible cardinals. We say that $I$ is \textit{suitably spaced} if 
        $$I\subseteq \{ \alpha<\kappa \colon \forall \alpha'<\alpha, o^{\vec E}(\alpha')<\alpha \}.\footnote{In particular, $\min(I\setminus (\alpha+1))> o^{\vec E}(\alpha)$ for every $\alpha<\kappa$,}$$
        and for every $\alpha\leq \kappa$ with $o^{\vec E}(\alpha)> 0$, $I$ is unbounded in $\alpha$.\footnote{In particular, by discreteness, $I\subseteq \{ \alpha<\kappa \colon o^{\vec{E}}(\alpha) =0 \}$.}
        \item We say that a discrete product forcing $\po = \prod_{\alpha\in I}\qo_\alpha$ is \textit{suitably spaced} if $I$ is suitably spaced.
    \end{enumerate}
\end{definition}

\begin{remark}\label{Remark:ExistenceOfSuitablySpacedSets}
To see that Definition \ref{Def:SuitablySpaced} is not vacuous, let
$$S=\{\alpha<\kappa \colon \forall \alpha'<\alpha,\ o^{\vec E}(\alpha')<\alpha\}.$$
The fact that $\vec{E}$ is nonoverlapping implies that $S\cap\lambda$ is club in $\lambda$ for every $\lambda\leq\kappa$ with $o^{\vec E}(\lambda)>0$. Therefore, the set $I$ of inaccessible cardinals in $S$ which are not limits of inaccessible cardinals in $S$ is suitably spaced.    
\end{remark}

Let us now argue that extenders from the $L[\vec{E}]$-sequence admit canonical lifts after a suitably spaced discrete product forcing has been done.

\begin{lemma}[Canonical lifts of extenders on the {$L[\vec E]$}-sequence]\label{Lem:CanonicalLiftOfAnExtender}
    Assume that $V = L[\vec{E}]$ where $\vec{E}$ is a coherent sequence of nonoverlapping extenders. Let $\po = \prod_{\alpha\in I} \qo_\alpha$ be a suitably spaced discrete product forcing that satisfies the hypotheses of Definition \ref{Def:discrete-product-forcings}.  Fix $\alpha\leq \kappa$ with $o^{\vec E}(\alpha)>0$ and $\beta<o^{\vec E}(\alpha)$. Let $F = E(\alpha,\beta)$, and denote by $j_F\colon V\to M$ the corresponding ultrapower embedding. 
    
    Let $G\subseteq \po$ be generic over $L[\vec E]$. Then $j_F[G]$ generates a $j_F(\po)$-generic set over $M$, and $j_F$ canonically lifts to an embedding $j^*_F\colon V[G]\to M[j_F[G]]$.
    \end{lemma}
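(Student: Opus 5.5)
The plan is to run the argument of Lemma \ref{Lem:LiftsOfPowersOfU} once more. Let $H$ be the filter on $j_F(\po)$ generated by $j_F[G]$. We show that $H$ is $j_F(\po)$-generic over $M$, and then apply Silver's criterion to get $j^*_F\colon V[G]\to M[H]$, $j^*_F(\tau_G)=j_F(\tau)_H$. The lift is canonical because $H$ is determined by $j_F[G]$ alone.

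\textbf{Structure of $j_F(\po)$ near $\alpha$.} Since $o^{\vec E}(\alpha)>0$, suitable spacing gives that $I\cap\alpha$ is unbounded in $\alpha$, and discreteness gives $\alpha\notin I$. Hence $j_F(\po)_\alpha=\po_\alpha$, and $j_F(p)\uhr\alpha=p\uhr\alpha$ for every $p\in\po$. In $M$ we have $o^{j_F(\vec E)}(\alpha)=\beta$, so suitable spacing of $j_F(I)$ in $M$ gives $\eta_0:=\min(j_F(I)\setminus\alpha)>\alpha+1+\beta=\text{lh}(F)$. (If $\alpha=\kappa$ there is no tail at all.) Thus $j_F(\po)=\po_\alpha\times(j_F(\po)\setminus\alpha)$, where the second factor is $\eta_0$-closed in $M$ and $\eta_0$ lies above all generators of $F$. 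Also $\mathcal P(\alpha)\subseteq M$ and $V_\alpha\subseteq M$, since $\text{lh}(F)>\alpha$.

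\textbf{A variant of the Discrete Fusion Lemma.} Fix a dense open $D\in M$ and write $D=j_F(g)(a)$, where $a\in[\alpha+1+\beta]^{n}$ and $g\colon[\alpha]^n\to\{\text{dense open subsets of }\po\}$ is in $V$. (If $\alpha=\kappa$, use $[\kappa]^n$ and truncate to some $\lambda<\kappa$.) I would modify the construction in the proof of Lemma \ref{Lem:DiscreteFusion} to prove the following. For every $p$ there is $p^*\geq p$ such that for every limit point $\nu$ of $I$ with $\nu\notin I$, every $r\geq p^*\uhr\nu$ in $\po_\nu$, and every $u\in[\nu^*]^n$ with $\nu^*=\min(I\setminus\nu)$, there is $r'\geq r$ in $\po_\nu$ with $r'{}^\frown p^*\setminus\nu\in g(u)$. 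The construction is the same as before: at stage $i\in I$, strengthen the tail $\po\setminus i$ through the pairs $(r,u)$ with $r\in\po_i$ and $u\in[i]^n$. There are fewer than $i$ such pairs and the tail is $i$-closed. The uniform statement over all relevant $\nu$ is then handled as in the final step of that proof. By elementarity, $j_F(p^*)$ has the same property in $M$ at $\nu=\alpha$. There $\alpha$ is a limit point of $j_F(I)$ not in $j_F(I)$, and $\nu^*=\eta_0>\max a$, so $u=a$ is allowed.

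\textbf{Meeting $D$ inside $G$.} By density, choose $p^*\in G$ as in the previous step. Consider
\[
A=\{r\in\po_\alpha\colon r{}^\frown j_F(p^*)\setminus\alpha\in D\}.
\]
This set is in $M$, and it is dense above $p^*\uhr\alpha$ in $\po_\alpha$. Since it is a subset of $V_\alpha$ in $M$, it lies in $V$. Therefore $G_\alpha$ meets $A$ at some $r\geq p^*\uhr\alpha$. Then $q:=r{}^\frown p^*\setminus\alpha\in G$, because it is the meet of $p^*\in G$ with $r\in G_\alpha$. Moreover $j_F(q)=r{}^\frown j_F(p^*)\setminus\alpha\in D$, so $H\cap D\neq\emptyset$.

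\textbf{Main obstacle.} The hard step is the fusion variant. The difficulty is that $a$ involves generators above $\alpha$, while the index bound of Lemma \ref{Lem:DiscreteFusion} only reaches $\sup(I\cap\eta)$. Moving the index up to $\eta_0+1$ does not help, because that would put the uncontrolled coordinate $\qo_{\eta_0}$ into the "initial" part. This is exactly where suitable spacing enters: it forces $\eta_0$ beyond $\text{lh}(F)$, so all generators fit in $[\nu^*]^n$ without crossing a forcing coordinate. I would double-check this point carefully, along with the case $\alpha=\kappa$, where $F$ is any extender on the $\kappa$-sequence and $\po$ has no tail above $\kappa$.
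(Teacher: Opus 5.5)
\textbf{There is a genuine gap: your fusion variant is false as stated.} The overall plan is the same as the paper's: fuse to get one $p^*\in G$ whose image handles $D$ at coordinate $\alpha$, then use genericity of $G_\alpha$ over $V$. The failure is in the fusion step. You demand density at every limit point $\nu$ of $I$ for all $u\in[\nu^*]^n$, where $\nu^*=\min(I\setminus\nu)$. Your count of ``fewer than $i$ pairs'' at stage $i=\nu^*$ is wrong: $|[i]^n|=i$, while the tail $\po\setminus i$ contains $\qo_i$, which is only $i$-closed.

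Here is a concrete counterexample. Take $\qo_{\nu^*}=\text{Add}(\nu^*,1)$, as in the application, and set $g(\{\gamma\})=\{q\in\po\colon \gamma\in\dom(q(\nu^*))\}$ for $\gamma<\nu^*$. Since $r'$ ranges only over $\po_\nu=\po_{\nu^*}$, your property would force $\nu^*\subseteq\dom(p^*(\nu^*))$, which is impossible. Your observation that $a$ lies below $\eta_0$ in $M$ does not rescue this. The property you transfer by elementarity must first hold in $V$ at all relevant $\nu$, and there the coordinate $\nu^*$ is exactly the one that cannot absorb $\nu^*$ many requirements.

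\textbf{The fix.} Bound $u$ by a quantity computed from $\vec E$ that lies strictly below the next forcing coordinate: require density only for $u\in[\nu+1+o^{\vec E}(\nu)]^{n}$.
\begin{itemize}
\item The first clause of suitable spacing gives $o^{\vec E}(\nu)<\nu^*$. So at stage $\nu^*$ there are only $|\po_{\nu^*}|\cdot|\nu+1+o^{\vec E}(\nu)|<\nu^*$ requirements, and the $\nu^*$-closed tail can absorb them.
\item Nonoverlapping gives $\nu+1+o^{\vec E}(\nu)<\alpha$, so $g(u)$ is defined.
\item Coherence gives $o^{\vec E^M}(\alpha)=\beta$. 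After applying $j_F$, the bound at $\nu=\alpha$ becomes exactly $\alpha+1+\beta$, so $a$ is covered.
\end{itemize}
This is precisely the paper's dense set $E$. The paper organizes the fusion as a tail $t$ above $\alpha$ followed by the Discrete Fusion Lemma inside $\po_\alpha$. Your single fusion over all of $I$ also works once the bound is corrected.

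\textbf{A smaller slip in the last step.} In general $j_F(q)\neq r{}^\frown(j_F(p^*)\setminus\alpha)$. On $j_F(I)\cap[\alpha,j_F(\alpha))$ the condition $j_F(q)$ agrees with $j_F(r)$, not with $j_F(p^*\uhr\alpha)$. This is harmless: $r\geq p^*\uhr\alpha$ gives $j_F(q)\geq r{}^\frown(j_F(p^*)\setminus\alpha)$, and $D$ is open. This is how the paper concludes, via a common extension in $G$.
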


    \begin{proof}
        Let us argue that $j_F[G]$ generates a $j_F(\po)$-generic over $M$. Note that since $I$ is suitably spaced and $o^{\vec E}(\alpha)>0$, $\alpha = \sup(I\cap \alpha)$.
        
        Fix $D\in M$ dense open. Let $a\in [(\alpha+1+\beta)\setminus \alpha]^{<\omega}$ be a finite set of generators for $F$, and $d\colon [\alpha]^{<\omega}\to V$ be a function in $V$, such that $D = j_F(d)(a)$. We can assume that for every $\vec{\nu}\in [\alpha]^{<\omega}$, $d(\vec{\nu})\subseteq \po$ is dense open. Let
    \begin{align*}
        E = \{ p\in \po \colon &\text{for every }\xi<\alpha \text{ with }\xi= \sup(\xi\cap I) \text{ and for every }\\ &\vec{\nu}\in [ \xi+1+o^{\vec{E}}(\xi) ]^{<\omega}, \text{ the set}\\
        &\{ r\in \po_\xi \colon r{}^\frown p\setminus  \xi \in d(\vec{\nu})\}\subseteq \po_\xi\\
        &\text{is a dense open subset of }\po_{\xi} \text{ above } p\uhr \xi \}.
    \end{align*}
    Note that for every $\xi<\alpha$,  $\xi+1+o^{\vec{E}}(\xi)<\alpha$ by nonoverlapping. In particular, $d(\vec{\nu})$ is defined for each $\vec{\nu}\in [\xi+1+o^{\vec E}(\xi)]^{<\omega}$. To simplify the notation, denote for every $\xi<\alpha$, $\xi':=\xi+1+o^{\vec E}(\xi)$.
    
    \begin{claim}\label{Claim:TheSetE}
        $E\subseteq \po$ is a dense open subset.
    \end{claim}

    \begin{proof}
        Fix $p\in \po$. Since $o^{\vec{E}}(\alpha)>0$, the forcing $\po_\alpha = \prod_{\alpha'\in I\cap \alpha}\qo_{\alpha'}$ is by itself a suitably spaced discrete product forcing (with respect to the coherent sequence $\vec{E}\uhr \alpha$). 
        
     We first argue that there exists $t\in \po\setminus\alpha$, $t\geq p\setminus\alpha$, such that for every $\vec{\nu}\in[\sup(I\cap \alpha)]^{<\omega}$, the set
$$d_\alpha(\vec{\nu}):=\{q\in\po_\alpha\colon q{}^\frown t\in d(\vec{\nu})\}$$
is dense open in $\po_\alpha$ above $p\uhr\alpha$.\footnote{Note that in the case where $\alpha = \kappa$ this is trivial.} To see this, enumerate $\langle\langle q_i,\vec{\nu}_i\rangle\colon i<\zeta\rangle$ the set $\po_\alpha\times[\sup(I\cap \alpha)]^{<\omega}$, where $\zeta  = \left|\po_\alpha\times[\sup(I\cap \alpha)]^{<\omega} \right|$. We argue that the forcing $\po\setminus\alpha$ is  $\zeta^+$-closed. Indeed, if $\alpha\notin I$ the forcing $\po\setminus \alpha$ is more than $\alpha^+$-closed, and $\zeta\leq \alpha^+$. If $\alpha\in I$, by discreteness, the set $I\cap \alpha$ is bounded in $\alpha$ and thus $\zeta<\alpha$. It follows that $\po\setminus \alpha$ is more than $\zeta$-closed. Overall, we may therefore construct an increasing sequence of conditions $\langle t_i\colon i<\zeta\rangle$ with an upper bound $t$, such that for every $i<\zeta$, there is $q_i'\geq q_i$ in $\po_\alpha$ satisfying
$$q_i'{}^\frown t_i\in d(\vec{\nu}_i).$$
Since each $d(\vec{\nu})$ is open and $t\geq t_i$, it follows that
$q_i'{}^\frown t\in d(\vec{\nu}_i)$. Hence each $d_\alpha(\vec{\nu})$ is dense above $p\uhr\alpha$. Its openness follows immediately from the openness of $d(\vec{\nu})$.
        
        Next, for every $\xi<\alpha$ with  $\xi\notin I$, let
        \begin{align*}
             \mathfrak{d}(\xi) = \{ q\in \po_\alpha \colon &\text{for every } \vec{\nu}\in [\xi']^{<\omega},\text{ the set }  \{ r\in \po_\xi \colon r{}^\frown q\setminus \xi{} \in d_\alpha(\vec{\nu}) \}\subseteq \po_{\xi} \\
             &\text{is a dense open subset of }\po_\xi \text{ above }q\uhr \xi\}. 
        \end{align*}
        For every other value of $\xi<\alpha$, let $\mathfrak{d}(\xi)\subseteq \po_\alpha$ be an arbitrary dense open subset.
        
        We argue that $\la \mathfrak{d}(\xi)\colon \xi<\alpha\ra$ is a sequence of dense open subsets of $\po_\alpha$ above $p\uhr\alpha$. Fix $\xi<\alpha$ with $\xi \notin I$, and let $\langle\la r_i,\vec{\nu}_i\rangle\colon i<\zeta\ra$ enumerate $\po_\xi\times[\xi']^{<\omega}$, where $\zeta=|\po_\xi\times [\xi']^{<\omega}|\leq {\xi'}^+$. Fix $q\in\po_\alpha$ extending $p\uhr\alpha$. Construct an increasing sequence $\langle s_i\colon i<\zeta\rangle\subseteq\po_\alpha\setminus\xi$ of extensions of $q\setminus\xi$, together with an upper bound $s$, such that for every $i<\zeta$ there is some $r\geq r_i$ in $\po_\xi$ satisfying $r{}^\frown s_i\in d_\alpha(\vec{\nu}_i)$. Note that upper bounds can be taken at limit stages because $\po_\alpha\setminus\xi$ is $\min(I\setminus (\xi+1))$-closed. In particular, by nonoverlapping, $\po_\alpha\setminus \xi$ is ${\xi'}^+$-closed. It follows, using the openness of each $d_\alpha(\vec{\nu})$, that
        $(q\uhr\xi){}^\frown s\in\mathfrak d(\xi)$.
        
        Apply the Discrete Fusion Lemma (Lemma \ref{Lem:DiscreteFusion}) on the sequence $\la \mathfrak{d}(\xi) \colon \xi<\alpha \ra$ to find a condition $q\geq p\uhr \alpha$ such that for every $\xi<\alpha$ with $\xi = \sup(I\cap \xi)$,
        $$ \{ r\in \po_\xi \colon r{}^\frown q\setminus \xi \in \mathfrak{d}(\xi) \}\subseteq \po_\xi $$
        is a dense open subset of $\po_\xi$ above $q\uhr \xi$. 
        
        Let $p^* = q{}^\frown t$. We argue that $p\leq p^*\in E$. Indeed, fix $\xi<\alpha$ with $\xi =\sup(I\cap \xi)$ and $\vec{\nu}\in [\xi']^{<\omega}$. It remains to show that the set $\{r\in \po_\xi \colon r{}^\frown p^*\setminus \xi \in d(\vec{\nu}) \}$ is a dense open subset of $\po_\xi$. 
        
        Fix $r\in \po_\xi$. By the choice of $q$, there exists $r'\geq r$ in $\po_\xi$ such that $r'{}^\frown q\setminus \xi\in \mathfrak{d}(\xi)$. By the definition of $\mathfrak{d}(\xi)$, there exists $r''\geq r$ in $\po_\xi$ such that ${r''}{}^\frown q\setminus \xi\in d_\alpha(\vec{\nu})$. Thus, by the definition of the set $d_\alpha(\vec{\nu})$, ${r''}{}^\frown q\setminus \xi{}^\frown t\in d(\vec{\nu})$, namely ${r''}{}^\frown p^*\setminus \xi \in d(\vec{\nu})$, as desired.
        \end{proof}
        Since $E\subseteq \po$ is dense open, there exists some $p\in G\cap E$. Thus, in $V$, for every $\xi<\alpha$ with $\xi = \sup(I\cap \xi)$ and for every $\vec{\nu}\in [\xi' ]^{<\omega}$, the set
        $$ \{  r\in \po_\xi \colon r{}^\frown p\setminus \xi\in d(\vec{\nu}) \} $$
        is a dense open subset of $\po_\xi$ above $p\uhr \xi$. 

        Moving to $M$, recall that the set  $j_F(I)\cap\alpha=I\cap\alpha$ is unbounded in $\alpha$.   Moreover, $a\subseteq \alpha+1+\beta$, namely 
        $$a\in\left[ \alpha+1+o^{{\vec{E}}^M}(\alpha)\right]^{<\omega}.$$
        Thus, by elementarity, in $M$,
        $$D^* := \{ r\in j_F(\po)_\alpha \colon r{}^\frown j(p)\setminus \alpha \in j_F(\vec{d})(a) \}\subseteq j_F(\po)_\alpha $$
        is a dense open subset of $j_F(\po)_\alpha$ above $j_F(p)\uhr \alpha$. Note that $D = j_F(d)(a)$. Also, $j_F(\po)_\alpha = \po_\alpha$ and $j_F(p)\uhr \alpha = p\uhr \alpha$. Since $G\uhr \alpha$ is already $\po_\alpha$-generic over $V$, $D^*\in M\subseteq V$ and $D^*\subseteq \po_\alpha$ is dense open, there exists a condition $p'\in G$ such that $p'\uhr \alpha\in D^*$. In particular, 
        $$ (j_F(p')\uhr \alpha){}^\frown (j_F(p)\setminus \alpha) = (p'\uhr \alpha){}^\frown (j_F(p)\setminus \alpha)  \in D. $$
        Finally, pick $p^*\in G$ that extends both $p, p'$. Since $D$ is open and $j_F(p^*)\geq (j_F(p')\uhr \alpha){}^\frown (j_F(p)\setminus \alpha)$, we deduce that $j_F(p^*)\in D$. 
    \end{proof}

\begin{corollary}\label{Cor:PreserviungStrongness}
    Assume that $V = L[\vec{E}]$ where $\vec{E}$ is a coherent sequence of nonoverlapping extenders. Let $\po = \prod_{\alpha\in I} \qo_\alpha$ be a suitably spaced discrete product forcing that satisfies the hypotheses of Definition \ref{Def:discrete-product-forcings}. Let $G\subseteq \po$ be generic over $L[\vec{E}]$. Then $\kappa$ remains a strong cardinal in $V[G]$. 
\end{corollary}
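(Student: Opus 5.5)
We use the standard characterization: $\kappa$ is strong in $V[G]$ iff for every $\lambda>\kappa$ there is an elementary embedding $j^*\colon V[G]\to N^*$, definable in $V[G]$, with $\text{crit}(j^*)=\kappa$ and $V_\lambda^{V[G]}\subseteq N^*$. Such an embedding can be converted into a $(\kappa,|V_\lambda|^+)$-extender in $V[G]$ witnessing $\lambda$-strongness. So we fix $\lambda>\kappa$ and, since $o^{\vec E}(\kappa)=\infty$ in $V=L[\vec E]$, choose $\beta<o^{\vec E}(\kappa)$ large enough that $F=E(\kappa,\beta)$ witnesses $\lambda$-strongness of $\kappa$ in $V$. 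That is, $V_\lambda\subseteq M=\Ult(V,F)$, and we may take $\lambda$ a strong limit above $\kappa$.

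By Lemma \ref{Lem:CanonicalLiftOfAnExtender}, applied with $\alpha=\kappa$, the set $j_F[G]$ generates a filter $H\subseteq j_F(\po)$ that is generic over $M$. Since $j_F[G]\subseteq H$, Silver's criterion yields the lift $j^*_F\colon V[G]\to M[H]$, and $j_F^*$ is definable in $V[G]$ from $F$ and $G$. Its critical point is still $\kappa$, because $j^*_F$ extends $j_F$.

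It remains to check $V_\lambda^{V[G]}\subseteq M[H]$. We factor $j_F(\po)=\po\times(j_F(\po)\setminus\kappa)$, using $j_F(\po)_\kappa=\po$ and $j_F(p)\uhr\kappa=p$. Then $H\uhr\po=G$, so $G\in M[H]$. Every element of $V_\lambda^{V[G]}$ has a $\po$-name in $V_{\lambda}$, once $\lambda$ is a limit beyond $\text{rank}(\po)<\kappa^+$ (more precisely, a name in $V_{\lambda'}$ for some $\lambda'$ slightly above $\lambda$, which is handled by choosing $F$ witnessing $\lambda'$-strongness). Since $V_{\lambda'}\subseteq M$, the name lies in $M$, and its evaluation by $G\in M[H]$ lies in $M[H]$. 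Hence $V_\lambda^{V[G]}\subseteq M[G]\subseteq M[H]$, and evaluating the same names in $M[H]$ shows that $V_\lambda^{M[H]}$ agrees with $V_\lambda^{V[G]}$.

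The main obstacle is the bookkeeping of name ranks: we must ensure that every element of rank $<\lambda$ in $V[G]$ has a name of rank bounded below the strongness degree. This is standard, since $\po$ has size $\kappa^+$ and we may take names as nice names built hereditarily from antichains. By taking $\beta$ large we can absorb any such bound, so the argument goes through.
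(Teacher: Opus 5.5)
Your proposal is correct and follows essentially the same route as the paper. Both arguments take $F=E(\kappa,\beta)$ witnessing enough strongness in $L[\vec E]$, lift $j_F$ canonically via Lemma~\ref{Lem:CanonicalLiftOfAnExtender}, and obtain $V_\lambda^{V[G]}\subseteq M[H]$ by evaluating $\po$-names lying in $V_\lambda\subseteq M$. You also spell out a step the paper leaves implicit: $G\in M[H]$, because $j_F(\po)_\kappa=\po$ and $j_F(p)\uhr\kappa=p$.
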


\begin{proof}
    We argue that the strongness of $\kappa$ in $V[G]$ is witnessed by the canonical lifts of extenders on $\kappa$ from the $L[\vec{E}]$-sequence. Thus, fix $\lambda>2^{\kappa^+}$ and let $\beta\in \text{Ord}$ be such that $F:=E(\kappa, \beta)$ witnesses that $\kappa$ is $\lambda$-strong in $L[\vec{E}]$.  Let $j^*_F\colon L[\vec E][G]\to M_F[j_F[G]]$ be the canonical lift of $j_F$ as in the proof of Lemma \ref{Lem:CanonicalLiftOfAnExtender}. We argue that $j^*_F$ witnesses the $\lambda$-strongness of $\kappa$ in $V[G]$. In other words, $(V_\lambda)^{V[G]}\subseteq M_F[j_F[G]]$. To see this, fix $x\in V[G]$ of rank less than $\lambda$. Since $\lambda$ was chosen sufficiently large, we can fix a $\po$-name $\tau$ for $x$ such that $\tau\in V_\lambda$. In particular, $\tau\in M_F$, and thus $x = (\tau)_G\in M_F[j_F[G]]$.
\end{proof}

\begin{theorem}[Canonical lift of a normal iteration]\label{Thm:LiftingNormalIteratesAfterDiscreteProcuctForcing}
    Assume that $V = L[\vec{E}]$ where $\vec{E}$ is a coherent sequence of nonoverlapping extenders. Let $\po = \prod_{\alpha\in I} \qo_\alpha$ be a suitably spaced discrete product forcing that satisfies the hypotheses of Definition \ref{Def:discrete-product-forcings}. Let $G\subseteq \po$ be generic over $L[\vec{E}]$.
    
    Let $j\colon L[\vec{E}]\to M$ be a normal linear iteration by extenders on the $L[\vec{E}]$-sequence. Let $H\subseteq j(\po)$ be the filter generated from $j[G]$. Then $H$ is generic over $M$, and $j$ lifts to $j^*\colon L[\vec{E}][G]\to M[H]$. Moreover, $j^*$ is by itself a normal iteration of $L[E][G]$ by extenders.
\end{theorem}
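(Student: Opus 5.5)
The plan is to argue by induction on the length $\theta$ of the normal iteration $\langle M_\xi, j_{\xi,\eta}, F_\xi\colon \xi\leq\eta\leq\theta\rangle$. Rather than analyzing $j$ all at once (for instance via the Support Lemma \ref{Lem:SupportLemma} and a multivariable fusion), I would lift one step at a time, using Lemma \ref{Lem:CanonicalLiftOfAnExtender} inside each iterate. For every $\xi\leq\theta$ I will simultaneously prove the following:
\begin{itemize}
\item the filter $H_\xi\subseteq j_{0,\xi}(\po)$ generated by $j_{0,\xi}[G]$ is $M_\xi$-generic;
\item $j_{0,\xi}$ lifts to $j^*_{0,\xi}\colon L[\vec E][G]\to M_\xi[H_\xi]$;
\item the lifts commute, i.e.\ $j^*_{\eta,\zeta}\circ j^*_{\xi,\eta}=j^*_{\xi,\zeta}$.
\end{itemize}
A preliminary observation is that, by elementarity, $M_\xi=L[\vec E^{M_\xi}]$ is again a model built from a coherent nonoverlapping sequence, and $j_{0,\xi}(\po)$ is, in $M_\xi$, a suitably spaced discrete product forcing relative to $\vec E^{M_\xi}$. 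Both properties are first-order facts about $\po$ and $\vec E$ in $L[\vec E]$.

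\textbf{Successor step.} Assume the statement holds at $\xi$. The extender $F_\xi$ lies on the $M_\xi$-sequence with critical point $\alpha_\xi$, and $o^{\vec E^{M_\xi}}(\alpha_\xi)>0$. I would apply Lemma \ref{Lem:CanonicalLiftOfAnExtender} inside $M_\xi$, with $M_\xi$ playing the role of $V$ and $H_\xi$ playing the role of $G$. The proof of that lemma uses only that $G$ is generic over the ground model and that the forcing is suitably spaced there, so it applies verbatim. This shows that $j_{\xi,\xi+1}[H_\xi]$ generates an $M_{\xi+1}$-generic filter. Since $H_\xi$ is generated by $j_{0,\xi}[G]$, the filter generated by $j_{\xi,\xi+1}[H_\xi]$ is exactly the one generated by $j_{0,\xi+1}[G]$, namely $H_{\xi+1}$. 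By Silver's criterion we then obtain $j^*_{\xi,\xi+1}\colon M_\xi[H_\xi]\to M_{\xi+1}[H_{\xi+1}]$, and we set $j^*_{0,\xi+1}=j^*_{\xi,\xi+1}\circ j^*_{0,\xi}$.

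\textbf{Limit step.} Let $\lambda$ be a limit and let $D\in M_\lambda$ be dense open in $j_{0,\lambda}(\po)$. Write $D=j_{\xi,\lambda}(D')$ for some $\xi<\lambda$ and some $D'\in M_\xi$ that is dense open in $j_{0,\xi}(\po)$. By the induction hypothesis there is $p\in G$ with $j_{0,\xi}(p)\in D'$, since $D'$ is open and $H_\xi$ is generated by $j_{0,\xi}[G]$. Then $j_{0,\lambda}(p)\in D$, so $H_\lambda$ is generic. The lifted maps $j^*_{\xi,\eta}$ form a commuting system, so their direct limit maps naturally into $M_\lambda[H_\lambda]$ via $j^*_{\xi,\lambda}(\tau_{H_\xi})=j_{\xi,\lambda}(\tau)_{H_\lambda}$. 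This map is onto, because every element of $M_\lambda[H_\lambda]$ is $\sigma_{H_\lambda}$ for some name $\sigma=j_{\xi,\lambda}(\tau)$. Hence $M_\lambda[H_\lambda]$ is the direct limit of the lifted system, and in particular it is well-founded.

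\textbf{Normality of the lifted iteration.} The ``moreover'' clause requires each $j^*_{\xi,\xi+1}$ to be the ultrapower of $M_\xi[H_\xi]$ by an extender $F^*_\xi$. I would take $F^*_\xi$ to be the extender of $M_\xi[H_\xi]$ derived from $j^*_{\xi,\xi+1}$ with the same generators as $F_\xi$. To see that $j^*_{\xi,\xi+1}$ is this ultrapower, I would argue as in the remark following Corollary \ref{Cor:PreservingMeasureNormnalityAndPowers}. Every element of $M_{\xi+1}[H_{\xi+1}]$ has the form $\sigma_{H_{\xi+1}}$ with $\sigma=j_{F_\xi}(g)(a)$, where $g$ ranges over names. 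Then $\sigma_{H_{\xi+1}}=j^*_{\xi,\xi+1}(g')(a)$, where $g'(\vec\nu)=g(\vec\nu)_{H_\xi}$. So the factor map from $\Ult(M_\xi[H_\xi],F^*_\xi)$ is surjective, hence the identity. Critical points and lengths are unchanged by the lift, so the lifted iteration remains normal. The limit case was handled above.

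\textbf{Main obstacle.} I expect the delicate part to be justifying that Lemma \ref{Lem:CanonicalLiftOfAnExtender} relativizes to each $M_\xi$ with the generic $H_\xi$. In particular one must check that $\alpha_\xi=\sup(j_{0,\xi}(I)\cap\alpha_\xi)$. This should follow from the suitable-spacing clause applied in $M_\xi$ to the ordinal $\alpha_\xi\leq j_{0,\xi}(\kappa)$, which satisfies $o^{\vec E^{M_\xi}}(\alpha_\xi)>0$. One also has to handle the case $\alpha_\xi=j_{0,\xi}(\kappa)$, where the tail forcing is trivial, exactly as in the lemma.
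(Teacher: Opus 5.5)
Your proposal is correct. The successor step is the same as the paper's. You apply Lemma \ref{Lem:CanonicalLiftOfAnExtender} inside the current iterate $M_\xi$, with the filter generated so far playing the role of $G$, and use elementarity to see that $j_{0,\xi}(\po)$ is suitably spaced over $\vec E^{M_\xi}$, so that $\alpha_\xi=\sup(j_{0,\xi}(I)\cap\alpha_\xi)$.

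The genuine difference is how you handle infinite iterations. The paper first reduces genericity of $H$ to the finite case via the Support Lemma \ref{Lem:SupportLemma}. A dense open $D\in M$ is written as $k(D')$, where $D'$ lives in a finite normal iterate $N_n$ of $L[\vec E]$ (in general not one of the $M_\xi$) and $k\colon N_n\to M$ is a further normal iteration. The finite case is then an induction on $n$, and only afterwards are the lifts of the initial segments assembled into a direct limit. You instead run a transfinite induction along the given iteration and treat limits using only its direct-limit structure. Each dense $D\in M_\lambda$ has the form $j_{\xi,\lambda}(D')$ for some $\xi<\lambda$, so the hypothesis at stage $\xi$ gives $p\in G$ with $j_{0,\lambda}(p)\in D$.

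Your route is more elementary and self-contained. It never leaves the given iteration and does not depend on the Support Lemma, whose proof needs the commuting-of-extenders rearrangement that the paper only sketches. The paper's route mainly serves to tie the argument to Mitchell's support analysis; for this statement it yields nothing your direct-limit argument does not.

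Your surjectivity argument showing $j^*_{\xi,\xi+1}=j_{F^*_\xi}$ also spells out what the paper merely asserts. One addition is worth making. The later proof of Theorem \ref{Thm:MainThmForStrong} needs the lifted steps to be \emph{internal}, in order to get closeness of the lifted comparison maps. So record that $F^*_\xi\in M_\xi[H_\xi]$. This holds because $H_{\xi+1}$ is definable in $M_\xi[H_\xi]$ from $H_\xi$ and $F_\xi\in M_\xi$, and hence so is $j^*_{\xi,\xi+1}$ on the relevant sets.
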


\begin{proof}

    Let us first argue that $j[G]$ generates a $j(\po)$-generic set over $M$. We prove this by first reducing the proof of the case where $j$ is an infinite iteration to the case where $j$ is finite.
    
    Suppose that $D\in M$ is a dense open subset of $j(\po)$. We would like to show that for some $p\in G$, $j(p)\in D$. Fix a support system $n, h, \vec{f}, \vec{N}, \vec{i}, \vec{F}, \vec{a}$ for $D$ as in Definition \ref{Def:Supports}. Denote $D' = i_{0,n}(h)( \vec{a}_0,\ldots, \vec{a}_{n-1} )$ so that the connecting embedding $k\colon N_n\to M$ satisfies $k(D') = D$. In particular, $D'\subseteq i_{0,n}(\po)$ is dense and open. Since $i_{0,n}$ is a finite normal iteration by extenders on the $L[\vec E]$-sequence, for which we assume the theorem applies,  there exists $p\in G$ such that $i_{0,n}(p)\in D'$. In particular, $j(p) = k(i_{0,n}(p)) \in D$, as desired. 

    Thus, it suffices to prove that for every $n<\omega$ and a linear iteration $j\colon L[\vec E]\to M$ by extenders on the $L[\vec{E}]$-sequence whose length is $n$, $j[G]$ generates a $j(\po)$-generic set over $M$. 
    
    We proceed by induction on $n$. Assume that the statement holds for iterations of length $n$, and let $j\colon V\to M$ be an iteration of length $n+1$. Let $i\colon V\to N$ be the initial segment of $j$ of length $n$, and let $F\in\vec E^N$ be such that $M=\Ult(N,F)$ and $j=j_F\circ i$. Suppose that $F=\vec E^N(\alpha,\beta)$. By induction, $i[G]$ generates an $i(\po)$-generic set over $N$. By Lemma \ref{Lem:CanonicalLiftOfAnExtender}, the extender ultrapower embedding $j_F\colon N\to M$ lifts to $j^*_F\colon N[i[G]]\to M[j[G]]$, which is also an extender embedding with respect to the $(\alpha, \alpha+1+\beta)$-extender $F^*$ which is derived from $j^*_F$.

    At this point, the proof given above actually shows that, for every ordinal $\alpha\leq \text{lh}(j)$, letting $j_\alpha\colon L[\vec E]\to M_\alpha$ consist of the first $\alpha$-many steps in the iteration, $j_\alpha[G]$ generates a $j_\alpha(\po)$-generic set over $M_\alpha$. Denote by $H_\alpha$ the $j_{\alpha}(\po)$-generic generated from $j_\alpha[G]$. Note that at each successor step, $H_{\alpha+1}\in M_\alpha[H_\alpha]$, since $H_{\alpha+1}$ coincides with the generic set generated from $j_{\alpha,\alpha+1}[ H_\alpha ]$. It thus follows that the canonical lift $j^*_{\alpha,\alpha+1}$ which satisfies $j^*_{\alpha,\alpha+1}(H_\alpha) = H_{\alpha+1}$, is an internal extender ultrapower of $M_\alpha[H_\alpha]$. It is also straightforward that for every limit ordinal $\alpha$, $M_{\alpha}[H_\alpha]$ is the direct limit of the system $\la \la M_\beta[H_\beta] \colon \beta<\alpha \ra, \la j^*_{\beta,\beta'} \colon \beta\leq \beta'\leq \alpha \ra \ra$. Letting $H\subseteq j(\po)$ be the generic set generated from $j[G]$, it follows that $j^*\colon V[G]\to M[H]$ is a linear, internal iterated extender ultrapower of $L[E][G]$. 
\end{proof}

\begin{remark}
Theorem \ref{Thm:LiftingNormalIteratesAfterDiscreteProcuctForcing} didn't assume definability of the iteration $j\colon L[\vec{E}]\to M$ in $L[\vec{E}]$. We just require that the iteration is internal: that is, at each successor step, the extender ultrapower is taken with respect to an extender that belongs to the model over which it is applied. It is not hard to see that the lift $j^*$ is then also an internal iteration of $L[\vec{E}][G]$. Furthermore, if $j$ is definable in $L[\vec{E}]$ to begin with, then its lift $j^*$ is definable in $L[\vec E][G]$.
    
\end{remark}

Before we prove Theorem \ref{Thm:MainThmForStrong}, let us state the following well-known result, which is most likely due to Mitchell.

\begin{theorem} \label{Thm:RestrictionsAreIterations}
Let $G$ be generic over $V=L[\vec E]$, and suppose that, in $V[G]$,
\[
j\colon V[G]\to M
\]
is an elementary embedding where $V[G]\vDash {}^\omega M \subseteq M$. Then $j\restriction V\colon V\to j(V)$
is a normal linear iteration of $V=L[\vec E]$ by extenders
from the $L[\vec E]$-sequence.
\end{theorem}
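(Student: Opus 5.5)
The plan is to reduce the statement to the classical analysis of embeddings of $L[\vec E]$ via comparison, following Mitchell's argument for the minimal model with a strong cardinal.

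\textbf{Step 1: the target model and the restricted embedding.} Let $i=j\restriction V$. Since $V$ is definable in $V[G]$ from the parameter $\vec E$ (it is $L[\vec E]$), elementarity gives $j(V)=L[j(\vec E)]$, computed in $M$. The map $i\colon L[\vec E]\to L[j(\vec E)]$ is then elementary.

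\textbf{Step 2: well-foundedness and countable closure.} By hypothesis, ${}^\omega M\subseteq M$ holds in $V[G]$. The model $L[j(\vec E)]$ is a proper class model built from a coherent nonoverlapping extender sequence, and it is closed enough to be iterable. Indeed, any countable elementary substructure of it embeds into $L[j(\vec E)]$ itself, and hence is iterable in $V[G]$. Iterability of $L[\vec E]$-type models with nonoverlapping extenders is given by Mitchell's theory of linear iterations, since no trees are needed below a strong cardinal.

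\textbf{Step 3: comparison.} I would compare $L[\vec E]$ with $L[j(\vec E)]$ by the standard coiteration. This yields normal linear iterations $k_0\colon L[\vec E]\to P$ and $k_1\colon L[j(\vec E)]\to P$ with a common final model. Minimality, in the form of the Dodd--Jensen style property, is the key point. Since $L[\vec E]$ is the minimal model with a strong cardinal, and $L[j(\vec E)]$ is elementarily equivalent to it as an inner model of $V[G]$, the side $L[j(\vec E)]$ cannot strictly win. I would then argue that $k_1$ is trivial. The approach is to apply the Dodd--Jensen lemma: $k_0$ and $k_1\circ i$ are both embeddings of $L[\vec E]$ into $P$, and $k_0$ is the iteration map, so $k_0(\xi)\le k_1(i(\xi))$ pointwise. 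Then I would use a uniqueness/ordinal-definability argument to show that the extenders used on the $L[j(\vec E)]$ side would have to be images under $i$ of extenders that already disagree. This would contradict coherence, so $k_1=\mathrm{id}$.

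\textbf{Step 4: identifying $i$ with $k_0$.} Once $L[j(\vec E)]$ is a normal iterate of $L[\vec E]$ via $k_0$, I still need $i=k_0$. Both are elementary embeddings of $L[\vec E]$ into the same model, and both are determined by their action on ordinals, since every element of $L[\vec E]$ is definable from ordinals and $\vec E$. I would show $i(\xi)=k_0(\xi)$ by induction along the iteration. At stage $\zeta$, with extender $F_\zeta$ of critical point $\alpha_\zeta$, I would check that $i$ and the partial iteration agree below $\alpha_\zeta$, and that the extender derived from $i$ at that stage matches $F_\zeta$. The derived extender would be computed using the Support Lemma (Lemma \ref{Lem:SupportLemma}), which represents points of the iterate through finitely many generators. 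Countable closure of $M$ is used to ensure that the iteration stages are well-founded and that the derived extenders are total over the intermediate models.

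\textbf{Main obstacle.} I expect the main obstacle to be Step 3/4, namely showing that the comparison produces no movement on the $j(V)$ side and that $i$ literally equals the iteration map rather than merely factoring through it. Here I would first try the Dodd--Jensen minimality argument together with the fact that every element of $L[\vec E]$ is definable from ordinals and $\vec E$.
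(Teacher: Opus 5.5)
The paper does not prove this statement. It quotes it as a known result, attributed to Mitchell, in the line of work of Kunen, Mitchell and Schindler. So your outline has to stand on its own, and as written it has a genuine gap in Steps 3--4.

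\textbf{The gap.} Nowhere do you use that $G$ is \emph{generic} over $V$, that is, that $V$ is the canonical model of the universe in which $j$ lives. Everything you do invoke is equally available in a situation where the conclusion is false. What you invoke is:
\begin{itemize}
\item $V$ is a definable inner model of the form $L[\vec E]$;
\item $j$ is an embedding of the whole universe with an $\omega$-closed target;
\item coiteration and Dodd--Jensen;
\item a map on $L[\vec E]$ is determined by its values on ordinals.
\end{itemize}

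\textbf{The counterexample.} Work in $L[\vec E]$ itself. Let $U=\vec E(\kappa,0)$, and let $N=\mathrm{Ult}(L[\vec E],U)=L[j_U(\vec E)]$ play the role of $V$. Take $j=j_U$ as the embedding of the universe; its target is closed under $\kappa$-sequences.
\begin{itemize}
\item $N$ is again a minimal model with a strong cardinal, now at $j_U(\kappa)$.
\item $j_U(N)=\mathrm{Ult}(N,j_U(U))$. So coiterating $N$ with $j_U(N)$ moves only the $N$-side, the analogue of your $k_1$ is the identity, and Dodd--Jensen holds.
\item Yet $j_U\restriction N$ has critical point $\kappa$, while $o^{j_U(\vec E)}(\kappa)=0$ by coherence.
\end{itemize}
Hence no extender on the $N$-sequence has critical point $\kappa$, and $j_U\restriction N$ is not a normal iteration of $N$. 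Your Step 4 induction breaks at its very first stage. Nothing in your argument forces $i$ to be the identity below the first critical point of $k_0$, or forces the extender derived from $i$ to lie on the sequence.

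\textbf{What is missing.} You need a maximality property of $V$ inside $V[G]$, and this is exactly where genericity, together with minimality of $L[\vec E]$, must be used. Two things must be shown:
\begin{itemize}
\item the critical point of $j\restriction V$, and then of the remaining map on each intermediate iterate, carries an extender on the relevant sequence;
\item the extenders derived from $j\restriction V$ already belong to $V$ and appear on that sequence. These extenders lie in $V[G]$ and are countably complete there because ${}^\omega M\subseteq M$, but that alone does not put them in $V$.
\end{itemize}
This is the analogue of the fact used in the paper's $L[U]$ section: $W\cap L[U]\in L[U]$ for every $\sigma$-complete ultrafilter $W\in V[G]$. In the core model literature it comes from $V$ being the core model of $V[G]$. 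That means maximality, or the hull and definability properties relative to a thick class of ordinals fixed by $j$. Only with such an input can coiteration plus Dodd--Jensen be upgraded to $i=k_0$.

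\textbf{Two smaller points.}
\begin{itemize}
\item The Step 3 argument that $k_1$ is trivial (``images under $i$ of extenders that already disagree'') is not yet an argument.
\item In Step 2, iterability of $L[j(\vec E)]$ should come from $M$ believing it is iterable and containing all countable hulls and countable iterations. Embedding hulls back into $L[j(\vec E)]$ is circular.
\end{itemize}
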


Theorem \ref{Thm:RestrictionsAreIterations} belongs to a sequence of results characterizing restrictions of elementary embeddings to canonical inner models as iteration maps. This line of work began with Kunen (see \cite{KunenMeasures}) and was subsequently extended by Mitchell (see \cite{Mitchellcore}) and Schindler (see \cite{Schi}).

We remark that, in the setting of Theorem \ref{Thm:RestrictionsAreIterations}, although $j\uhr V$ is an iteration of $L[\vec E]$, it need not be definable over $L[\vec E]$; see, for example, \cite[Section 5.2]{GitikKaplan}.

\begin{theorem}\label{Thm:EveryEmbeddingIsAFiniteIteration}
    Let $\vec{E}$ be a coherent sequence of nonoverlapping extenders. Let $\po = \prod_{\alpha\in I} \qo_\alpha$ be a suitably spaced discrete product forcing that satisfies the hypotheses of Definition \ref{Def:discrete-product-forcings}. Let $G\subseteq \po$ be generic over $L[\vec{E}]$. Assume that $M$ is an inner model of $L[\vec E][G]$ such that 
    $$({}^\omega M)\cap L[\vec E][G]\subseteq M.$$
    Let $j\colon L[\vec E][G]\to M $ be an elementary embedding definable in $L[\vec{E}][G]$. Then $j$ is a finite iteration of $L[\vec{E}][G]$ by extenders on the $\vec{E}^*$-sequence.
\end{theorem}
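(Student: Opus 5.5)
The plan is to compare $j$ with the canonical lift of its restriction to the ground model, and then use definability to show that this iteration has only finitely many steps.

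\emph{Step 1: $j\uhr V$ is a normal iteration.} Write $V=L[\vec E]$. Since ${}^\omega M\cap V[G]\subseteq M$, Theorem \ref{Thm:RestrictionsAreIterations} shows that $i:=j\uhr V\colon V\to j(V)$ is a normal linear iteration of $L[\vec E]$ by extenders on the $L[\vec E]$-sequence. Note that $j(V)=L[j(\vec E)]$, and $M=j(V)[j(G)]$.

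\emph{Step 2: $j$ is the canonical lift.} By Theorem \ref{Thm:LiftingNormalIteratesAfterDiscreteProcuctForcing}, the filter $H$ generated by $i[G]$ is $i(\po)$-generic over $j(V)$, and $i$ lifts to a normal iteration $i^*\colon V[G]\to j(V)[H]$ by the canonically lifted extenders (the $\vec E^*$-sequence). Since $j(p)=i(p)\in j(G)$ for every $p\in G$, we have $H\subseteq j(G)$. Both are generic filters over $j(V)$, so $H=j(G)$. Hence $M=j(V)[H]$, and $j$ and $i^*$ agree on $V$ and on $G$. Because every element of $V[G]$ has the form $\tau_G$ with $\tau\in V$, we get $j(\tau_G)=i(\tau)_{j(G)}=i^*(\tau_G)$, so $j=i^*$. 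It therefore remains to show that the iteration $i$ has finite length.

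\emph{Step 3: finiteness.} This is the main obstacle. Let $\la F_\xi\colon \xi<\theta\ra$ be the extenders used, with critical points $\kappa_\xi$, and suppose toward a contradiction that $\theta\geq\omega$. I would use the classical Kunen-style argument that a definable embedding cannot arise from an iteration of infinite length. By normality and nonoverlapping, the critical points are strictly increasing and $\text{lh}(F_\xi)<\kappa_\eta$ for $\xi<\eta$. Let $\lambda=\sup_{n<\omega}\kappa_n$. Since $\kappa_n$ is the critical point of the tail iteration $j_{n,\theta}$, the map $j$ sends $\kappa_n$ to $j_{0,n+1}(\kappa_n)\geq\kappa_{n+1}$, although $\kappa_n$ need not be in the range of the earlier maps; I would handle this using the supports of Lemma \ref{Lem:SupportLemma}. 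The standard conclusion is that the sequence $\la\kappa_n\colon n<\omega\ra$ is not in $M$, or alternatively that $\text{cf}^M(\lambda)$ disagrees with $\text{cf}^{V[G]}(\lambda)=\omega$. In the present case $\lambda$ is a limit of measurables of $j_{0,\omega}$-images, and $M$ computes $\lambda$ as an ordinal of uncountable cofinality in the limit model: $\kappa_\omega$ is regular in $M$ when $\lambda=\kappa_\omega$ is the critical point of the next step. If $\theta=\omega$ exactly, then $j_{0,\omega}$ is a direct limit, and the sequence of critical points $\la\kappa_n\colon n<\omega\ra$ is not in $M$ by the usual argument (Kunen's argument via the sequence $\la i_{0,n}(f)(\kappa_{<n})\rangle$). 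Either way this contradicts ${}^\omega M\cap V[G]\subseteq M$, since $\la\kappa_n\colon n<\omega\ra$ is definable in $V[G]$ from $j$, using the definability of $j$ and of $j\uhr V$ in $V[G]$. The technical heart is to carry out this argument when the extenders are long and the iteration is only internal step by step. I would reduce to the extender $F_n$ by measures derived at its critical point, so that the Kunen argument for normal measure iterations applies to the derived normal measures $\{X\subseteq\kappa_n: \kappa_n\in j_{n,n+1}(X)\}$.

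Once $\theta<\omega$, Step 2 shows that $j=i^*$ is a finite iteration of $L[\vec E][G]$ by extenders on the $\vec E^*$-sequence, which is the statement.
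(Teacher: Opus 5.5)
Steps 1 and 2 match the paper. The paper lifts after proving finiteness, but the order does not matter, since Theorem~\ref{Thm:LiftingNormalIteratesAfterDiscreteProcuctForcing} covers normal iterations of any length.

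\textbf{The gap is in Step 3.} Let $t=\la\kappa_n\colon n<\omega\ra$ be the sequence of the first $\omega$ critical points. Kunen's fact that $t$ is not in the last model of an iteration applies to $j(V)$, the last model of the ground iteration $i$. The closure hypothesis, however, only puts $t$ into $M=j(V)[j(G)]$. You assert $t\notin M$ without bridging the two.

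The missing observation is exactly the extra step in the paper's proof. $\po$ is $\sigma$-closed, being a full-support product of posets $\qo_\alpha$ that are $\alpha$-closed for inaccessible $\alpha$. So $j(\po)=i(\po)$ is $\sigma$-closed in $j(V)$ and adds no new $\omega$-sequences of ordinals. Hence $t\in M$ gives $t\in j(V)$, which is the contradiction. Alternatively, you could run the critical-sequence argument on the lifted iteration $i^*$ of $V[G]$ from Step 2: its models $M_\xi[H_\xi]$ have the same critical points and are direct limits at limit stages. As written, though, you apply the argument to the ground iteration.

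\textbf{The rest of Step 3 misplaces the difficulty.} The cofinality variant does not work in general. First, $\lambda=\sup_n\kappa_n$ may be strictly below $\kappa_\omega$, or $\theta$ may equal $\omega$. For example, suppose the $\kappa_n$ are measurable cardinals of $V$ with $\la\kappa_n\colon n<\omega\ra\in V$, and stage $n$ uses the image of a measure from $\vec E$ on $\kappa_n$. Then $j_{0,\omega}(\lambda)=\lambda$ still has cofinality $\omega$ in the final model. In the same example $j(\kappa_n)<\kappa_{n+1}$, contrary to your claim. Second, even when $\lambda$ is regular in $j(V)$, transferring that regularity to $M$ would again require the $\sigma$-closure of $j(\po)$.

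Nor is any reduction to derived normal measures needed to handle long extenders. We have $\kappa_m=\text{crit}(j_{m,\theta})$. Suppose $t$ lay in the final model of the ground iteration. Then it would already lie in $M_\omega$: for $\theta>\omega$, the tail iteration has critical point $\kappa_\omega\geq\lambda$ and does not change $\mathcal{P}(\kappa_\omega)$. Hence $t=j_{m,\omega}(\bar t)$ for some $m<\omega$. This would put $\kappa_m=j_{m,\omega}(\bar t(m))$ into the range of $j_{m,\omega}$, which is impossible.
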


\begin{proof}
    By Theorem \ref{Thm:RestrictionsAreIterations}, $M$ has the form $N[H]$ where $H\subseteq j(\po)$ is generic over $N$, and $N$ is the target of a linear, normal iteration  $\pi:=j\uhr L[\vec{E}] \colon L[\vec{E}]\to N $ of $L[\vec E]$ by its extenders. 
    
    We argue that $\pi$ is a finite iteration. Assume otherwise, and let $t = \la \kappa_n \colon n<\omega \ra\in [\mbox{Ord}]^{\omega}$ be the sequence of the first $\omega$-many critical points of extenders participating in $\pi$. Then $t\in ({}^\omega M)\cap L[\vec E][G]$, and thus $t\in M$. Since by elementarity $M = N[ j(G) ]$ and $j(\po)$ is a $\sigma$-closed forcing, we deduce that $t\in N$. However, the sequence of first $\omega$-many critical points in an iteration cannot belong to the final model in that iteration.

    It follows that $\pi\colon L[\vec E]\to N$ is a finite, normal linear iteration. By Theorem \ref{Thm:LiftingNormalIteratesAfterDiscreteProcuctForcing},  $\pi[G] = j[G]$ generates a $\pi(\po) = j(\po)$-generic set  over $N$.  Since $\pi[G] = j[G]\subseteq H$, and $H$ is $j(\po)$-generic over $N$, we deduce that $H$ is the generic set generated from $\pi[G]$, and $j\colon V[G]\to M = N[H]$ is the canonical lift of $\pi$. By Theorem \ref{Thm:LiftingNormalIteratesAfterDiscreteProcuctForcing}, $j$ is an iterated ultrapower by extenders on $L[\vec {E}][G]$, whose length is the same length as of $\pi$.
\end{proof}

We now proceed to the proof of our main result, Theorem \ref{Thm:MainThmForStrong}. 

\begin{proof}[Proof of Theorem \ref{Thm:MainThmForStrong}]
    We argue that $L[\vec{E}][G]\vDash {\rm UA}$. Let $U_0, U_1\in V[G]$ be $\sigma$-complete ultrafilters. By Theorem \ref{Thm:EveryEmbeddingIsAFiniteIteration}, there are finite, normal iterations $\pi_0\colon L[\vec E]\to M_0$, $\pi_1\colon L[\vec E]\to M_1$ such that $j^{L[\vec E][G]}_{U_0}, j^{L[\vec E][G]}_{U_1}$ are the canonical lifts of $\pi_0, \pi_1$, respectively. Moreover, $H_0:=j^{L[\vec E][G]}_{U_0}(G)$ and $H_1:=j^{L[\vec E][G]}_{U_1}(G)$ are generated from $\pi_0[G], \pi_1[G]$, respectively, and
    $$ \Ult(L[\vec E][G], U_0)\simeq M_0[H_0] \quad \text{and} \quad \Ult( L[\vec E][G], U_1 )\simeq M_1[H_1].$$
    
    Working in $L[\vec{E}]$, perform the usual least-disagreement comparison of $M_0, M_1$. It thus follows that there are  iterates $N_0, N_1$ and normal, linear iterations by extenders on the ${\vec E}^{M_0}$, ${\vec{E}}^{M_1}$-sequences, $i_0\colon M_0\to N_0$ and $i_1\colon M_1\to N_1$, respectively, such that either $N_0\unlhd N_1$ or $N_1\unlhd N_0$.\footnote{By $N_0\unlhd N_1$ we mean that the extender sequence $\vec{E}^{N_0}$ of $N_0$ is an initial segment of the extender sequence $\vec{E}^{N_1}$, in the sense of the lexicographic order.}
    
    We argue that $N_0 = N_1$. By the minimality of $L[\vec{E}]$,  
    $$L[\vec{E}]\vDash \text{ no strict initial segment $\vec{E}'$ of }\vec{E} \text{ witnesses that }L[\vec{E}'] \text{ has a strong cardinal}.$$
    This property propagates to $N_0, N_1$ and their extender sequences, by elementarity. Thus, $\vec{E}^{N_0} = \vec{E}^{N_1}$, namely $N_0 = N_1$. Denote $N = N_0 = N_1$. 

    \begin{claim}
        $i_0\circ \pi_0 = i_1\circ \pi_1$.
    \end{claim}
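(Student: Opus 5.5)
The plan is to prove the claim by showing that both $i_0\circ\pi_0$ and $i_1\circ\pi_1$ are the iteration map from $L[\vec E]$ to $N$ along a normal iteration, and that such a map is unique. Set $\sigma_0 = i_0\circ\pi_0$ and $\sigma_1=i_1\circ\pi_1$. Each is a composition of two normal linear iterations: first $\pi_k$, which is finite, then $i_k$, which comes from the comparison. I would use that a map of the form $L[\vec E]\to N$ is determined by its values on ordinals, since every element of $L[\vec E]$ is definable in $L[\vec E]$ from ordinals and $N$ is of the form $L[\vec E^N]$. So it suffices to show $\sigma_0\restriction \mathrm{Ord} = \sigma_1\restriction\mathrm{Ord}$.

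The first approach would use the Dodd--Jensen style minimality for iteration maps into the common model $N$. Each $\sigma_k$ is an iteration map, so each is pointwise minimal among embeddings $L[\vec E]\to N$. The argument goes as follows. Suppose $\sigma_0(\xi)>\sigma_1(\xi)$ for some least $\xi$. Then $\sigma_1^{-1}\circ\sigma_0$ is not available directly, so instead I would apply the standard argument. Both are iteration maps of $L[\vec E]$ into the same model $N$, and iteration maps $L[\vec E]\to N$ satisfy $\sigma(\xi)\le \tau(\xi)$ for every elementary $\tau\colon L[\vec E]\to N$. This is proved by taking a least counterexample and using elementarity of $\tau$ together with the fact that $N$ is an iterate (Dodd--Jensen lemma for iterable mice; here iterability of $L[\vec E]$ is linear iterability, which holds since $\vec E$ consists of countably complete extenders). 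Applying this both ways gives $\sigma_0(\xi)\le\sigma_1(\xi)\le\sigma_0(\xi)$ for all $\xi$.

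A more hands-on alternative, in case the Dodd--Jensen route needs care, is to argue via the Support Lemma (Lemma \ref{Lem:SupportLemma}). Rearrange each composite $i_k\circ \pi_k$ into a single normal linear iteration of $L[\vec E]$. The rearrangement commutes extenders as in the support-lemma proof, and is possible since $\pi_k$ is finite. Then use that a normal iteration of $L[\vec E]$ with final model $N$ is uniquely determined by $N$. The extender applied at critical point $\gamma$ is $\vec E^N(\gamma,o^{\vec E^N}(\gamma))$ restricted appropriately, because coherence gives $o^{\vec E^{M_{\xi+1}}}(\alpha_\xi)=\beta_\xi$ and later steps do not change it. This is the standard uniqueness argument, by induction along the critical points. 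The two normal iterations therefore coincide, and so do their maps.

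The main obstacle is the step that transforms the composite $i_k\circ\pi_k$ into a normal iteration, or equivalently justifies Dodd--Jensen minimality in this setting. I would try the Dodd--Jensen route first, since it avoids rearrangement entirely and only needs that both composites are iteration maps into the same $N$.
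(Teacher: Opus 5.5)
Your main (Dodd--Jensen) route is correct, but it is not the paper's argument.

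\textbf{The paper's route.} The paper uses a definability trick. It generalizes the claim to all pairs of finite iterations $\pi_0,\pi_1$ of $L[\vec E]$, together with the maps produced by the least-disagreement comparison. It then lets $x$ be the least element, in the canonical well-order of $L[\vec E]$, at which some such pair of composites disagrees. This whole family is uniformly first-order definable in $L[\vec E]$: finite iterations are coded by finite sequences of extenders, and the comparison is canonical. Hence $x$ is definable without parameters. So every elementary $\sigma\colon L[\vec E]\to N$, in particular both $i_0\circ\pi_0$ and $i_1\circ\pi_1$, sends $x$ to the unique element of $N$ satisfying the same definition. This contradicts the disagreement at $x$. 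The argument needs no copying construction, no well-foundedness of auxiliary iterations, and no reduction to ordinals. It works only because the maps come from a parameter-free definable procedure.

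\textbf{Your route.} You apply Dodd--Jensen minimality in both directions to get $\sigma_0\restriction\mathrm{Ord}=\sigma_1\restriction\mathrm{Ord}$. You then use that every element of $L[\vec E]$ is definable from ordinals in $(L[\vec E],\in,\vec E)$, so maps elementary into $(N,\in,\vec E^N)$ are determined by their action on ordinals. This proves the more general fact that any two iteration maps of $L[\vec E]$ into the same model coincide, whether or not they are definable.

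The price is the Dodd--Jensen lemma itself. Its proof is not the least-counterexample-plus-elementarity argument you describe. Instead, it copies the iteration along the competing embedding $\omega$ times and needs the resulting composite linear iteration to be well-founded. That holds here, since everything is an internal linear iteration of $V=L[\vec E]$. The composites $i_k\circ\pi_k$ only need to be linear, not normal, iterations for this, so no rearrangement is required.

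\textbf{Your backup route.} As stated it is inaccurate. $(\gamma,o^{\vec E^N}(\gamma))\notin\dom(\vec E^N)$, so the extender used at critical point $\gamma$ cannot be read off $N$'s sequence. It is $\vec E^{M_\xi}(\gamma,o^{\vec E^N}(\gamma))$, where $\gamma$ is the least point at which the extender sequences of the current iterate $M_\xi$ and of $N$ disagree. Since the first route suffices, you can drop the second.
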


    \begin{proof}
        We prove the statement in more general settings: whenever $\pi_0\colon L[\vec{E}]\to M^0, \pi_1\colon L[\vec{E}]\to M^1$ are finite iterates of $L[\vec{E}]$ (for some inner models $M^0, M^1$), if $i_0\colon M^0\to N$ and $i_1\colon N^1\to N$ are the embeddings obtained by performing the least-disagreement comparison as above, then $i_0\circ \pi_0 = i_1\circ \pi_1$.

        We generalized the statement for all pairs of finite iterations in order to perform the proof by observing the least counterexample. Since $\pi_0, \pi_1$ can be associated with finite sequences of extenders, the least counterexample is first order definable in $L[\vec{E}]$: Namely, assume that $x\in L[\vec{E}]$ is the least with respect to the canonical well-order of $L[\vec{E}]$, for which there are finite iterations $\pi_0\colon L[\vec{E}]\to M^0, \pi_1\colon L[\vec{E}]\to M^1$ such that $(i_0\circ \pi_0)(x)\neq (i_1\circ \pi_1)(x)$, where $i_0, i_1$ are as above. Denote $x^*_0:=(i_0\circ \pi_0)(x)$ and $x^*_1:= (i_1\circ \pi_1)(x)$. Then $x^*_0\neq x^*_1$, but both elements are definable in $N$ using the same first order formula, which is a contradiction.
    \end{proof}

    Overall, we obtained a comparison $i_0\circ \pi_0 = i_1\circ \pi_1 \colon L[\vec{E}]\to N$, where $i_0, i_1$ are normal, linear iterations (definable in $V$). 
    
    Apply Theorem \ref{Thm:LiftingNormalIteratesAfterDiscreteProcuctForcing} in order to lift $i_0\colon M_0\to N$ to $i^*_0\colon M_0[ H_0 ]\to N[ H^*_0 ]$, where $H^*_0$ is the generic generated from $i_0[H_0]$. Similarly, lift $i_1$ to $i^*_1\colon M_1[H_1]\to N[H^*_1]$. Note that by Theorem \ref{Thm:LiftingNormalIteratesAfterDiscreteProcuctForcing}, $i^*_0, i^*_1$ are iterations by extenders, which are internal in the sense of $M_0[H_0], M_1[H_1]$, respectively (but not necessarily definable in $M_0[H_0], M_1[H_1]$, respectively).  Furthermore, $i_0[H_0]$ and $i_1[H_1]$ coincide, because both are generated from conditions of the form
    $$ (i_0\circ \pi_0)(p) = (i_1\circ \pi_1)(p)$$
    for $p\in G$. It thus follows that $H^*_0 = H^*_1$, and, consequently,  $i^*_0\circ \pi^*_0 = i^*_1\circ \pi^*_1$.

    Recall that both $i^*_0, i^*_1$ are iterations of extender ultrapowers. Each extender participating in one of the iterations is internal to the model over which it is applied. In particular, $i^*_0, i^*_1$ are obtained by composing and taking direct limit of close embeddings. Consequently, $i^*_0, i^*_1$ are themselves close embeddings (see \cite[Lemma 2.2.23]{goldbergtheultrapoweraxiom}). Namely $W_0, W_1\in V[G]$ admit a comparison by close embeddings. By Lemma \ref{Lem:ComparisonByCloseEmbeddings}, $W_0, W_1$ also admit a comparison by internal $\sigma$-complete ultrafilters.
\end{proof}

\begin{proof}[Proof of Corollary \ref{Cor:UA+Strong+FarFromHOD}]
    Fix $I\subseteq \kappa$ a suitably spaced discrete set of inaccessible cardinals (for instance, the one from Remark \ref{Remark:ExistenceOfSuitablySpacedSets}). Let $\po = \prod_{\alpha\in I}\qo_\alpha$, where for every $\alpha<\kappa$, $\qo_\alpha = \text{Add}(\alpha,1)$. In $L[\vec{E}][G]$, $\kappa$ is strong (by Corollary \ref{Cor:PreserviungStrongness}) and {\rm UA} holds (by Theorem \ref{Thm:MainThmForStrong}). Finally, arguing exactly as in the proof of Corollary \ref{Cor:UA+Meas+VneqHODX}, 
    $$L[\vec{E}][G] \vDash \forall X\in V_\kappa\ (V\neq {\rm HOD}_X).$$
\end{proof}



We remark that the proof of Theorem \ref{Thm:MainThmForStrong} can be modified to show the following:

\begin{theorem}[Close Comparison Theorem]
    Assume that $V = L[\vec{E}]$ where $\vec{E}$ is a coherent sequence of nonoverlapping extenders. Let $\po = \prod_{\alpha\in I}\qo_\alpha$ be a suitably spaced discrete product forcing. Let $G\subseteq \po$ be generic over $L[\vec E]$. Let $j_0\colon L[\vec{E}][G]\to M^*_0, j_1\colon L[\vec{E}][G]\to M^*_1$ be elementary embeddings definable in $L[\vec{E}][G]$, and assume that both $M^*_0, M^*_1$ are inner models of $L[\vec{E}][G]$ which are closed under countable sequences inside $L[\vec E][G]$. Then there is an inner model $N^*$ of $L[\vec E][G]$ and close embeddings $i_0\colon M^*_0\to N^*$, $i_1\colon M^*_1\to N^*$ such that $i_0\circ j_0 = i_1\circ j_1$. 
\end{theorem}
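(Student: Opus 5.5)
The plan is to follow the proof of Theorem \ref{Thm:MainThmForStrong} almost verbatim, replacing the ultrapower embeddings $j^{L[\vec E][G]}_{U_0}, j^{L[\vec E][G]}_{U_1}$ by the arbitrary definable embeddings $j_0, j_1$, and stopping right before the appeal to Lemma \ref{Lem:ComparisonByCloseEmbeddings}.

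\textbf{Step 1: reduce to finite iterations.} Since $j_0,j_1$ are definable in $L[\vec E][G]$ and their targets $M^*_0,M^*_1$ are closed under countable sequences in $L[\vec E][G]$, Theorem \ref{Thm:EveryEmbeddingIsAFiniteIteration} applies to each. For $k\in\{0,1\}$ it gives a finite normal linear iteration $\pi_k:=j_k\uhr L[\vec E]\colon L[\vec E]\to M_k$ such that $M^*_k=M_k[H_k]$. Here $H_k$ is the $\pi_k(\po)$-generic generated by $\pi_k[G]$, and $j_k=\pi_k^*$ is the canonical lift from Theorem \ref{Thm:LiftingNormalIteratesAfterDiscreteProcuctForcing}.

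\textbf{Step 2: compare the ground models.} Working in $L[\vec E]$, I will coiterate $M_0$ and $M_1$ by least disagreement to obtain normal iterations $i_0\colon M_0\to N_0$ and $i_1\colon M_1\to N_1$. Exactly as before, minimality of $L[\vec E]$ (no proper initial segment of the sequence yields a strong cardinal), transferred by elementarity, gives $N_0=N_1=:N$. The Claim in the proof of Theorem \ref{Thm:MainThmForStrong} was already proved for arbitrary finite iterations, so it gives $i_0\circ\pi_0=i_1\circ\pi_1$.

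\textbf{Step 3: lift and check closeness.} I will use Theorem \ref{Thm:LiftingNormalIteratesAfterDiscreteProcuctForcing}, applied over $M_k$ in place of $L[\vec E]$, to lift $i_k$ to $i^*_k\colon M_k[H_k]\to N[H^*_k]$. Here $H^*_k$ is generated by $i_k[H_k]$, hence by $\{(i_k\circ\pi_k)(p)\colon p\in G\}$. Since $i_0\circ\pi_0=i_1\circ\pi_1$, these generating sets coincide, so $H^*_0=H^*_1=:H^*$. Set $N^*:=N[H^*]$. Then $i^*_0\circ j_0=i^*_1\circ j_1$, because both maps send $\tau_G$ to $(i_k\circ\pi_k)(\tau)_{H^*}$. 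Each $i^*_k$ is an iteration by internal extenders, so it is a composition and direct limit of close embeddings, hence close by \cite[Lemma 2.2.23]{goldbergtheultrapoweraxiom}. Finally, $N^*$ is an inner model of $L[\vec E][G]$: the coiteration is definable in $L[\vec E]$, and $H^*$ is definable from $G$.

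\textbf{Main obstacle.} The delicate point is the application of Theorem \ref{Thm:LiftingNormalIteratesAfterDiscreteProcuctForcing} over $M_k$ rather than over $L[\vec E]$. This requires that $\pi_k(\po)$ be a suitably spaced discrete product forcing relative to $\vec E^{M_k}$, which holds by elementarity, and that $H_k$ be generic over $M_k$, which Step 1 supplies. The genericity argument (support lemma together with fusion) must then be rerun with $M_k$ as the base model. I expect this to go through, since every ingredient is first order and transfers by elementarity, but it is the step that needs checking. A second point to verify is that the coiteration terminates with $N_0=N_1$ rather than with a proper initial segment when the starting models are arbitrary finite iterates; this again follows from the minimality argument.
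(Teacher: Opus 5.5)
Your proposal is correct and is exactly the paper's argument, which says only to mimic the proof of Theorem~\ref{Thm:MainThmForStrong} with $j_0,j_1$ in place of the ultrapower embeddings: reduce to finite iterations via Theorem~\ref{Thm:EveryEmbeddingIsAFiniteIteration}, coiterate and use the Claim to get $i_0\circ\pi_0=i_1\circ\pi_1$, lift via Theorem~\ref{Thm:LiftingNormalIteratesAfterDiscreteProcuctForcing} applied over $M_k$ (as the paper itself does there), and stop before Lemma~\ref{Lem:ComparisonByCloseEmbeddings}. Your extra checks make explicit what the paper leaves implicit: that $H^*_0=H^*_1$ because both are generated by $(i_k\circ\pi_k)[G]$, and that $N^*$ is an inner model of $L[\vec E][G]$.
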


\begin{proof}
    Mimic the proof of Theorem \ref{Thm:MainThmForStrong}, replacing the ultrapower embeddings of $U_0, U_1$ with the embeddings $j_0, j_1$.
\end{proof}

\bibliography{citations}
\bibliographystyle{plain}

\end{document}